\definecolor{darkred}{RGB}{139,0,0}
\definecolor{darkblue}{RGB}{0,0,139}
\definecolor{darkgreen}{RGB}{0,100,0}
\tikzset{>={Straight Barb[scale=0.8]}}
   \def\MR#1{}
\setlist[enumerate]{leftmargin=*}
\setlist[itemize]{leftmargin=*}
\newcommand\smallsquare{{\mathbin{\text{\raise0.17ex\hbox{\scalebox{.7}{$\blacksquare$}}}}}}
\newcommand{\circled}[1]{\raisebox{.5pt}{\textcircled{\raisebox{-.9pt} {#1}}}}
\newcommand{\plus}{\ensuremath{{\scaleobj{0.8}{+}}}}
\newcommand{\SO}{\ensuremath{\mathrm{SO}}}
\newcommand{\BO}{\ensuremath{\mathrm{BO}}}
\newcommand{\BSO}{\ensuremath{\mathrm{BSO}}}
\newcommand{\Fr}{\ensuremath{\mathrm{Fr}}}
\newcommand{\Diff}{{\ensuremath{\mathrm{Diff}}}}
\newcommand{\Map}{\ensuremath{\mathrm{Map}}}
\newcommand{\Emb}{\ensuremath{\mathrm{Emb}}}
\newcommand{\BTop}{\ensuremath{\mathrm{BTop}}}
\newcommand{\BSTop}{\ensuremath{\mathrm{BSTop}}}
\newcommand{\interior}{\ensuremath{\mathrm{int}}}
\newcommand{\inc}{\ensuremath{\mathrm{inc}}}
\newcommand{\Fun}{\ensuremath{\mathrm{Fun}}}
\newcommand{\PSh}{\mathrm{PSh}}
\newcommand{\cl}{\mathrm{cl}}
\DeclareMathAlphabet{\mathpzc}{OT1}{pzc}{m}{it}
\newcommand{\catsingle}[1]{\ensuremath{\mathscr{#1}}}
\newcommand{\cat}[1]{\ensuremath{\mathsf{#1}}}
\newcommand{\icat}[1]{\ensuremath{\mathscr{#1}}}
\newcommand{\half}{{\nicefrac{1}{2}}}
\newcommand{\midman}{{\shortparallel}}
\newcommand{\oH}{\ensuremath{\mathrm{H}}}
\newcommand{\oO}{\ensuremath{\mathrm{O}}}
\newcommand{\bfH}{\ensuremath{\mathbf{H}}}
\newcommand{\bfR}{\ensuremath{\mathbf{R}}}
\newcommand{\bfZ}{\ensuremath{\mathbf{Z}}}
\newcommand{\bfS}{\ensuremath{\mathbf{S}}}
\newcommand{\cD}{\ensuremath{\catsingle{D}}}
\newcommand{\cM}{\ensuremath{\catsingle{M}}}
\newcommand{\cO}{\ensuremath{\catsingle{O}}}
\newcommand{\cP}{\ensuremath{\catsingle{P}}}
\newcommand{\cS}{\ensuremath{\catsingle{S}}}
\newcommand{\colim}{\ensuremath{\mathrm{colim}}}
\newcommand{\ra}{\rightarrow}
\newcommand{\lra}{\longrightarrow}
\newcommand{\lla}{\longleftarrow}
\newcommand{\xra}[1]{\xrightarrow{#1}}
\newcommand{\xsra}[1]{\overset{#1}{\rightarrow}}
\newcommand{\xlra}[1]{\overset{#1}{\longrightarrow}}
\newcommand{\BSp}{\mathrm{BSp}}
\newcommand{\Sp}{\mathrm{Sp}}
\newcommand{\GL}{\mathrm{GL}}
\newcommand{\Aut}{\mathrm{Aut}}
\newcommand{\Hom}{\mathrm{Hom}}
\newcommand{\coker}{\mathrm{coker}}
\newcommand{\bP}{\mathrm{bP}}
\newcommand{\Ext}{\mathrm{Ext}}
\renewcommand{\Top}{\mathrm{Top}}
\newcommand{\ext}{\mathrm{ext}}
\newcommand{\sgn}{\sigma}
\newcommand{\im}{\mathrm{im}}
\newcommand{\id}{\mathrm{id}}
\newcommand{\pr}{\mathrm{pr}}
\newcommand{\fib}{\mathrm{fib}}
\newcommand{\op}{\mathrm{op}}
\newcommand{\forget}{\mathrm{forget}}
\newcommand{\Env}{\ensuremath{\mathrm{Env}}}
\newcommand{\IntMap}{{\mathrm{map}}}
\newcommand{\Th}{{\mathrm{Th}}}
\newcommand{\rev}{{\mathrm{rev}}}
\newcommand{\ob}{{\mathrm{ob}}}
\newcommand{\Disc}{\ensuremath{\cat{Disc}}}
\newcommand{\DiscInf}{\ensuremath{\icat{D}\mathrm{isc}}}
\newcommand{\ManInf}{\ensuremath{\icat{M}\mathrm{fd}}}
\newcommand{\ncBordInf}{\ensuremath{\mathrm{nc}\icat{B}\mathrm{ord}}}
\newcommand{\Cat}{\ensuremath{\icat{C}\mathrm{at}}}
\newcommand{\AAlg}{\ensuremath{\mathrm{AAlg}}}
\newcommand{\CatInf}{\ensuremath{\icat{C}\mathrm{at}_\infty}}
\newcommand{\glue}{\ensuremath{\mathrm{glue}}}
\newcommand{\tw}{\ensuremath{\mathrm{tw}}}
\newcommand{\fr}{\ensuremath{\mathrm{fr}}}
\newcommand{\Poinc}{\ensuremath{\icat{P}\mathrm{oinc}}}
\newcommand{\Wh}{\ensuremath{\mathrm{Wh}}}
\newcommand{\LMod}{\ensuremath{\mathrm{LMod}}}
\newcommand{\RMod}{\ensuremath{\mathrm{RMod}}}
\newcommand{\Mod}[1]{\ (\mathrm{mod}\ #1)}
\newtheorem{bigthm}{Theorem}
\newtheorem{thm}{Theorem}[section]
\newtheorem{lem}[thm]{Lemma}
\newtheorem{question}[thm]{Question}
\theoremstyle{definition}
\newtheorem*{nconvention}{Convention}
\theoremstyle{remark}
\newtheorem{ex}[thm]{Example}
\newtheorem{rem}[thm]{Remark}
\newtheorem*{nrem}{Remark}
\newtheorem{construction}[thm]{Construction}
\begin{document}

\title{Framed configuration spaces and exotic spheres}

\author{Manuel Krannich}
\address{Department of Mathematics, Karlsruhe Institute of Technology, 76131 Karlsruhe, Germany}
\email{krannich@kit.edu}

\author{Alexander Kupers}
\address{Department of Computer and Mathematical Sciences, University of Toronto Scarborough, 1265 Military Trail, Toronto, ON M1C 1A4, Canada}
\email{a.kupers@utoronto.ca}

\author{Fadi Mezher}
\address{Department of Mathematics, Karlsruhe Institute of Technology, 76131 Karlsruhe, Germany}
\email{fadi.mezher@kit.edu}

\subjclass[2020]{11F06, 18F50, 20E26, 55R80, 57R40, 57R60}

\begin{abstract}
We determine when an exotic sphere $\Sigma$ of dimension $d\not{\equiv }1\Mod{4}$ can be detected through the homotopy type of its truncated $\DiscInf$-presheaf. The latter records the diagram of framed configuration spaces of bounded cardinality in $\Sigma$ with natural point-forgetting and -splitting maps between them, and it gives rise to the finite stages in Goodwillie--Weiss' embedding calculus tower. Our proof involves three ingredients that could be of independent interest: a gluing result for $\DiscInf$-presheaves of manifolds divided into two codimension zero submanifolds, a version of Atiyah duality in the context of $\DiscInf$-presheaves, and a computation of the finite residual of the mapping class group of the connected sums $\sharp^g(S^{2k+1}\times S^{2k+1})$.
\end{abstract}

\maketitle

\tableofcontents

\section{Introduction} How ``much'' of a closed $d$-dimensional manifold is seen by the homotopy types of its configuration spaces? There are several ways to make this question precise, and to some of them partial answers have been given \cite{Levitt,AouinaKlein,LongoniSalvatore,AroneSzymik,KnudsenKupers,KKSDisc}. In this work we answer an instance of this question in the case of exotic spheres. 

To explain the result, we write $\ManInf_d$ for the $\infty$-category with smooth $d$-dimensional manifolds $M$ as objects and spaces of smooth embeddings $\Emb(M,N)$ as morphisms. This has a full subcategory $\DiscInf_d\subseteq \ManInf_d$ spanned by the manifolds $S \times \bfR^d$ for finite sets $S$. To a smooth $d$-manifold $M$, we can associate a presheaf $E_M$ on $\DiscInf_d$ with values in the $\infty$-category of spaces $\cS$, given by 
\begin{equation}\label{equ:em-def}\smash{\DiscInf_d^\op\ni S \times \bfR^d\xmapsto{E_M}\Emb(S \times \bfR^d,M)\in \cS,}\end{equation}
and this construction extends---via postcomposition---to a functor out of $\ManInf_d$
\[\smash{\ManInf_d\ni M\xmapsto{\ E\ }E_M\in \PSh(\DiscInf_d)}\]
with values in the $\infty$-category $\PSh(\DiscInf_d)\coloneq \Fun(\DiscInf_d^\op,\cS)$ of presheaves. The individual values of  the presheaf $E_M$ at objects $\smash{S \times \bfR^d}$ are equivalent to the \emph{framed configuration spaces},
\begin{equation}\label{equ:framed-configuration}\smash{\Emb(S \times \bfR^d,M)\simeq F^{\fr}_S(M)\coloneqq\big\{(m_s,T_{m_s}M\overset{\phi_s}\cong \bfR^d)_{s\in S}\in \Fr(TM)^S\mid m_i\neq m_j\text{ for }i\neq j\big\},}\end{equation}
so $E_M\in  \PSh(\DiscInf_d)$ encodes, roughly speaking, the homotopy types of all framed configuration spaces in $M$ together with natural maps between them (e.g.\,the value of $E_M$ at  $(\iota\times \id_{\bfR^d})\colon S\times\bfR^d\hookrightarrow S'\times \bfR^d$ for an injection $\iota\colon S\hookrightarrow S'$ corresponds under \eqref{equ:framed-configuration} to forgetting some of the points). The equivalence class of the presheaf $E_M\in \PSh(\DiscInf_d)$ is a diffeomorphism-invariant of $M$ from which many other invariants can be extracted, including the \emph{factorisation homology} of $M$ with coefficients in any $\BO(d)$-framed $E_d$-algebra \cite{Salvatore, AyalaFrancisTop, LurieHA} or the \emph{embedding calculus tower} for spaces of embeddings into or out of $M$ \cite{WeissImmersion,BdBWSheaf}. In fact, for $d\ge5$, there is still no known example of two closed non-diffeomorphic $d$-manifolds $M$ and $N$ such that $E_M$ and $E_N$ are equivalent in $\PSh(\DiscInf_d)$\footnote{For $d=4$ such examples are known by \cite[Theorem B]{KnudsenKupers}.}.

There are several variants of the presheaf $E_M$ one can consider, e.g.~one for topological manifolds $M$ based on a variant of the $\infty$-category $\smash{\DiscInf_{d}}$ involving topological embeddings, or one for manifolds $M$ equipped with a tangential structure: for instance, if $M$ comes equipped with an orientation one can consider the analogue  $\smash{\DiscInf^\plus_{d}}$ of $\smash{\DiscInf_{d}}$ where one restricts to orientation-preserving embeddings, and the presheaf $\smash{E_M\in \PSh(\DiscInf^\plus_{d})}$ defined as in \eqref{equ:em-def} but using spaces of orientation-preserving embeddings. As another variant, instead of considering presheaves on the full $\infty$-category $\DiscInf_d$, one may for fixed $k\ge 1$ restrict to the full subcategory $\DiscInf_{d,\le k}\subseteq \DiscInf_d$ spanned by $S\times \bfR^d$ for $|S| \leq k$ and consider the presheaf $E_M\in \PSh(\DiscInf_{d,\le k})$ obtained from $E_M$ by restriction; this is called the \emph{$k$-truncated $\DiscInf$-presheaf} of $M$. In terms of diagrams of framed configuration spaces, passing to the truncated setting amounts to imposing an upper bound on the number of points.

\subsection*{The main result}In this work, we study the question of which homotopy $d$-spheres can be distinguished by their truncated $\DiscInf^{\plus}$-presheaves. Recall that a \emph{homotopy $d$-sphere} is a closed smooth $d$-manifold $\Sigma$ that is homotopy equivalent to a standard $d$-sphere, and thus---by the solution to the topological Poincaré conjecture---also homeomorphic to it. To state our main result, we write  $M\sharp N$ for the connected sum of two oriented $d$-manifolds $M$ and $N$, and $\overline{M}$ for $M$ equipped with the opposite orientation.

\begin{bigthm}\label{bigthm:exotic-spheres}For oriented homotopy $d$-spheres $\Sigma_0$ and $\Sigma_1$ with $d\not\equiv1\Mod{4}$ and $1< k < \infty$, we have 
	$E_{\Sigma_0} \simeq E_{\Sigma_1}$ in $\PSh(\DiscInf^{\plus}_{d,\leq k})$ if and only if $\Sigma_0 \sharp \overline{\Sigma}_1$ bounds a compact parallelisable manifold.
\end{bigthm}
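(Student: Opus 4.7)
The proof splits along the two directions of the equivalence. A common setup represents an oriented homotopy $d$-sphere $\Sigma$ as $\Sigma = D^d \cup_{\phi_\Sigma} D^d$ for a framed boundary diffeomorphism $\phi_\Sigma$ of $S^{d-1}$; by the gluing result for $\DiscInf$-presheaves from the abstract, $E_\Sigma \in \PSh(\DiscInf^{\plus}_{d, \leq k})$ is then presented as a gluing of two copies of $E_{D^d}$ along the presheaf automorphism induced by $\phi_\Sigma$ on the boundary presheaf data. Since the presheaves of the two standard disks are themselves standard, the equivalence class of $E_\Sigma$ is captured by the class of this gluing datum in a suitable space of self-equivalences of the boundary data.

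For the $(\Leftarrow)$ implication, suppose $W$ is a compact parallelisable $(d+1)$-manifold with $\partial W = \Sigma_0 \sharp \overline{\Sigma}_1$. The parallelisation of $W$ provides a framed cobordism between $\phi_{\Sigma_0}$ and $\phi_{\Sigma_1}$ which, under the $\DiscInf$-version of Atiyah duality, trivialises the difference class controlling the two gluing automorphisms in the boundary presheaf. Consequently the gluing data for $\Sigma_0$ and $\Sigma_1$ become equivalent, and the gluing formula yields $E_{\Sigma_0} \simeq E_{\Sigma_1}$.

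For the $(\Rightarrow)$ implication, an equivalence $E_{\Sigma_0} \simeq E_{\Sigma_1}$ translates under the gluing formula into an equivalence of the two gluing automorphisms at the presheaf level. I would then run Atiyah duality in reverse to convert this into the vanishing of a class in a suitable stable-homotopy-theoretic target, which should amount to a parallelisable null-cobordism of $\Sigma_0 \sharp \overline{\Sigma}_1$, i.e.\ the statement that $\Sigma_0 \sharp \overline{\Sigma}_1 \in bP_{d+1}$. The main obstacle, and the place where the third ingredient enters, is ensuring that the presheaf-level obstruction coincides precisely with the $bP_{d+1}$-obstruction rather than some coarser invariant. The plan is to stabilise the problem by connected sum with $\sharp^g(S^{2k+1}\times S^{2k+1})$ for $g$ large, reducing the question to the detection of a class in the block mapping class group of the stabilisation; the finite-residual statement then identifies the presheaf-detectable quotient with a finite abelian quotient in which the image of $bP_{d+1}$ is exactly the kernel of the presheaf invariant. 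The restriction $d \not\equiv 1 \Mod{4}$ avoids the Kervaire-invariant dimensions, where this matching would fail or require an additional correction, while the assumption $1 < k$ is essential so that at least two-point framed configurations are retained, giving Atiyah duality enough input to detect the dualising pairing.
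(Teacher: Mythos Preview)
Your proposal has the roles of the three ingredients reversed between the two implications, and as a result the $(\Leftarrow)$ argument is essentially missing.

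For $(\Rightarrow)$, the paper's argument is much shorter than yours and uses neither the gluing result nor the finite residual: an equivalence $E_{\Sigma_0}\simeq E_{\Sigma_1}$ in $\PSh(\DiscInf^{\plus}_{d,\le 2})$ yields, via the naturality of Atiyah duality in presheaf equivalences, that the stable collapse maps $\bfS\to\Th(-T\Sigma_i)$ agree after identifying the tangent data, hence $[\Sigma_0]=[\Sigma_1]\in\coker(J)_d$, which by Kervaire--Milnor is equivalent to $\Sigma_0\sharp\overline{\Sigma}_1\in\bP_{d+1}$. No stabilisation is needed, and your attempt to ``un-glue'' a presheaf equivalence into an equivalence of gluing automorphisms would require the gluing functor to be injective on equivalence classes, which the gluing theorem does not assert. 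The restriction $d\not\equiv 1\pmod 4$ is not used in this direction; the paper notes that $(\Rightarrow)$ holds in all dimensions.

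For $(\Leftarrow)$, your sketch invokes Atiyah duality, but there is no mechanism by which a parallelisable filling of $\Sigma_0\sharp\overline{\Sigma}_1$ produces an equivalence of presheaf-level gluing data along $S^{d-1}$; indeed $\pi_0\,\Diff^{\plus}(S^{d-1})\cong\Theta_d$ is finite, hence residually finite, so nothing forces $\bP_{d+1}$ to die in $\pi_0\,\Aut(E_{S^{d-1}})$. The paper's key move is to glue not along $S^{d-1}$ but along $W_g=\sharp^g(S^n\times S^n)$ with $n\ge 3$ odd and $g\ge 2$: one embeds $V_g=\natural^g(D^{n+1}\times S^n)$ into the given manifold and splits along $\partial V_g\cong W_g$. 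The gluing theorem then reduces the claim to showing that the image of $\bP_{2n+2}$ dies under $\pi_0\,\Diff^{\plus}(W_g)\to\pi_0\,\Aut_{\PSh(\DiscInf^{\plus}_{2n,\le k})}(E_{W_g})$. This holds because the finite-residual computation identifies the image of $\bP_{2n+2}$ with $\fr(\pi_0\,\Diff^{\plus}(W_g))$, finite residuals are carried into finite residuals by homomorphisms, and the target is residually finite for $k<\infty$ by the cited result of Mezher. The hypothesis $d\not\equiv 1\pmod 4$ enters here (to arrange $n$ odd when $\bP_{d+1}\neq 0$), as does $k<\infty$; the hypothesis $k>1$ is used only for $(\Rightarrow)$.
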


\begin{nrem}\ 
\begin{enumerate}
\item \cref{bigthm:exotic-spheres} answers \cite[Question 5.5]{KnudsenKupers} in many cases and its proof also yields a partial answer to Question 5.6 loc.cit..
\item The question of which homotopy $d$-spheres bound compact parallelisable manifold has been studied extensively in the past and features in Kervaire--Milnor's classification of homotopy spheres \cite{KervaireMilnor}. For example, from the latter one can deduce that there are $16256$ oriented homotopy $15$-spheres up to orientation-preserving diffeomorphism, of which precisely half bound compact parallelisable manifolds. By \cref{bigthm:exotic-spheres}, this implies that also precisely half of them have the property that they can be distinguished from the standard $15$-sphere by their $k$-truncated $\DiscInf^{\plus}$-presheaf for any $1<k<\infty$.
\item For $d$-manifolds $M$ and $N$, the mapping spaces $\Map_{\PSh(\Disc_d,\le k)}(E_M,E_N)$ are equivalent to the $k$th stage $T_k\Emb(M,N)$ in the embedding calculus tower \cite{BritoWeissSheaves}. From this point of view, \cref{bigthm:exotic-spheres} answers the question which homotopy $d$-spheres can be detected through the finite stages of the embedding calculus tower if $d\not\equiv1\Mod{4}$.
\end{enumerate}
\end{nrem} 

\subsection*{On the assumptions}\label{sec:open}We comment on the assumptions on $d$ and $k$ in \cref{bigthm:exotic-spheres}:

\subsubsection*{The case $k=1$}Two oriented $d$-manifolds $M$ and $N$ have equivalent $k$-truncated $\DiscInf^{\plus}$-presheaves for $k=1$ if and only if there is a \emph{tangential homotopy equivalence} between them, i.e.\,a homotopy equivalence $\varphi\colon M\ra N$ covered by an orientation-preserving fibrewise isomorphism of the tangent bundles \cite[Proposition 5.10]{KKoperadic}. Since it is known that any two oriented homotopy $d$-spheres $\Sigma_0$ and $\Sigma_1$ are tangentially homotopy equivalent (see e.g.\ \cite[Lemma 1.1]{RayPedersen}), one gets that $E_{\Sigma_0}\simeq E_{\Sigma_1}$ in $\PSh(\DiscInf^{\plus}_{d,\leq k})$ for $k=1$ and \emph{all} $\Sigma_0$ and $\Sigma_1$. 

\begin{nrem}As any homotopy equivalence between homotopy spheres is homotopic to a homeomorphism, one can even choose a tangential homotopy equivalence $\varphi\colon \Sigma_0\ra \Sigma_1$ that is a homeomorphism. The latter induces homotopy equivalences $F_S^\fr(\Sigma_0)\simeq F_S^\fr(\Sigma_1)$ between all framed configuration spaces, so in view of \eqref{equ:framed-configuration}, this  shows that the truncated $\DiscInf$-presheaves $E_{\Sigma_0}$ and $E_{\Sigma_1}$ can never be distinguished by considering any of their individual values.\end{nrem}
		 
\subsubsection*{The case $d\equiv 1\Mod{4}$} Our proof of the ``only if'' direction in \cref{bigthm:exotic-spheres} also goes through in the excluded dimensions $d \equiv 1 \Mod 4$ (see \cref{sec:coker-j-spheres}). The ``if'' direction however (i.e.\,whether $\smash{E_{\Sigma_0}\simeq E_{\Sigma_1}}$ if $\smash{\Sigma_0\sharp\overline{\Sigma_1}}$ bounds a compact parallelisable manifold) does not. By the solution of the Kervaire invariant one problem, it is known that for $d=2^\ell-3$ with $\ell\le 7$, only the standard sphere bounds a compact parallelisable manifold, so there is nothing to show, but in all other dimensions $d\equiv1\Mod{4}$, there is a single nontrivial oriented homotopy sphere that bounds a compact parallelisable manifold, called the \emph{Kervaire sphere}. This leads us to ask:

\begin{question}\label{qu:kervaire}For which $m\ge2$ and $2\le k< \infty$ is there an equivalence $E_{\Sigma_{K}}\simeq E_{S^{4m+1}}$ in $\PSh(\DiscInf^{\plus}_{d,\le k})$ where  $\Sigma_{K}$ is the $(4m+1)$-dimensional Kervaire sphere?
\end{question}

\subsubsection*{The case $k=\infty$}In the excluded case $k=\infty$, the ``only if'' direction of \cref{bigthm:exotic-spheres} follows from the case $1<k<\infty$, since an equivalence of nontruncated presheaves induces by restriction one of all their truncations. However, the ``if''-direction does not follow: similarly to how there are non-equivalent spaces whose finite Postnikov truncations are all equivalent \cite{AdamsExample}, there may be non-equivalent presheaves $X$ and $Y$ in $\PSh(\DiscInf^{\plus}_d)$ that are equivalent in $\smash{\PSh(\DiscInf_{d,\le k}^\plus)}$ for all $k<\infty$. \cref{bigthm:exotic-spheres} thus leaves open the question for which homotopy $d$-spheres $\Sigma_0$ and $\Sigma_1$ such that the connected sum $\Sigma_0\sharp\overline{\Sigma}_1$ bounds a compact parallelisable manifold there exists an equivalence $E_{\Sigma_0}\simeq E_{\Sigma_1}\in\PSh(\DiscInf^{\plus}_{d})$, \emph{without truncation}. In particular we ask:

\begin{question}\label{qu:nontrun}For which oriented homotopy spheres $\Sigma$ that bound a compact parallelisable manifold does there exist an equivalence $E_{\Sigma}\simeq E_{S^{d}}$ in $\PSh(\DiscInf^{\plus}_{d})$? 
\end{question}

\subsection*{Ingredients in the proof}The proof of \cref{bigthm:exotic-spheres} involves three ingredients that could be of independent interest:
\begin{enumerate}
\item A description of $\DiscInf$-presheaves of manifolds that are decomposed into two codimension $0$ submanifolds that intersect in their common boundary (see \cref{thm:gluing}).
\item A proof that the Atiyah duality equivalence $D(\Sigma_+^\infty M)\simeq \Th(-TM)$ for closed $d$-manifolds is natural in equivalences of $\DiscInf$-presheaves (see \cref{thm:collapse-zigzag} and \cref{sec:normal-invariants}), based on a variant of a construction due to Naef--Safranov \cite[Section 4.2]{NaefSafronov} (see \cref{rem:naefsafronov}). This also suggests a relation between algebraic $L$-theory and the $\DiscInf$-structure spaces as introduced in \cite{KKSDisc} (see \cref{sec:sdisc-ltheory}).
\item A computation of the finite residual (the intersection of all finite-index normal subgroups) of the group of isotopy classes of orientation-preserving diffeomorphisms of the iterated connected sums $\sharp^g(S^{2m+1}\times S^{2m+1})$ (see \cref{thm:fr-prelim} and \cref{sec:fr}).
\end{enumerate}

\subsection*{Acknowledgments}FM would like to thank Søren Galatius for the many helpful discussions regarding this work. AK and MK would like to thank Oscar Randal-Williams for useful conversations. MK and FM acknowledge funding from the European Union through an ERC grant (MaFC, 101221003). AK acknowledges the support of the Natural Sciences and Engineering Research Council of Canada (NSERC) [funding reference number 512156 and 512250]. FM is supported by the Danish National Research Foundation through the Copenhagen Center for Geometry and Topology (DNRF151).

\section{$\DiscInf$-presheaves of exotic spheres}\label{sec:main-ideas}
In this section, we prove \cref{bigthm:exotic-spheres} (or rather a strengthening of it) assuming three ingredients (Theorems\ \ref{thm:gluing-isotopy}, \ref{thm:fr-prelim}, and \ref{thm:collapse-zigzag}) which are established in the subsequent three sections.

\begin{nconvention}We work in the setting of $\infty$-categories throughout, so a ``category'' is always an $\infty$-category. Unless mentioned otherwise, manifolds and embeddings are always assumed to be smooth. Given a homotopy $d$-sphere $\Sigma$, we write $\Sigma\in \bP_{d+1}$ if $\Sigma$ bounds a compact parallelisable $(d+1)$-manifold. To treat the categories $\DiscInf_{d,\le k}$ and $\DiscInf_d$ from the introduction on the same footing, we write $\DiscInf_{d,\le \infty}\coloneqq \DiscInf_d$ (similarly for the oriented variant).
\end{nconvention}

\subsection{$\DiscInf$-presheaves of $\bP$-spheres}\label{sec:e-of-bp}The ``if'' direction of \cref{bigthm:exotic-spheres} follows from the following more general result by specialising to $M=S^d$:

\begin{thm}\label{thm:bpspheres} For a connected oriented $d$-manifold $M$ with  $d\not\equiv 1\Mod{4}$ and two oriented homotopy $d$-spheres $\Sigma_0,\Sigma_1$ with $\Sigma_0\sharp\overline{\Sigma_1}\in\bP_{d+1}$, there is for all $k<\infty$ an equivalence
\[E_{M\sharp \Sigma_0}\simeq E_{M\sharp \Sigma_1}\quad\text{in}\ \ \PSh(\DiscInf^\plus_{d,\le k}).\]
In particular, for all $\Sigma\in \bP_{d+1}$, we have $E_{M\sharp\Sigma}\simeq E_M$ in $\PSh(\DiscInf^\plus_{d,\le k})$ whenever $1\le k<\infty$. \end{thm}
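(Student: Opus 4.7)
The plan is to reduce the theorem to the following ``in particular'' specialisation: that $E_{N\sharp\Sigma}\simeq E_N$ in $\PSh(\DiscInf^\plus_{d,\le k})$ for every connected oriented $d$-manifold $N$ and every $\Sigma\in\bP_{d+1}$. Indeed, setting $N:=M\sharp\Sigma_1$ and $\Sigma:=\Sigma_0\sharp\overline{\Sigma_1}$, there is an orientation-preserving diffeomorphism $M\sharp\Sigma_0\cong N\sharp\Sigma$, so this specialisation suffices. When $d$ is even, one has $\bP_{d+1}=0$ by Kervaire--Milnor, forcing $\Sigma\cong S^d$ with nothing to prove; combined with the hypothesis $d\not\equiv 1\Mod{4}$, the substance of the argument concerns the case $d=4m+3$, in which $\bP_{d+1}\subseteq\Theta_d$ is the classical cyclic group of Milnor spheres realised as boundaries of parallelisable $(2m+2)$-handlebody fillings.

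Next, I would set up a gluing decomposition common to $N$ and $N\sharp\Sigma$. Choosing a small embedded disk inside of which the connected sum with $\Sigma$ takes place, and a handle presentation of a parallelisable filling $W$ of $\Sigma$, both $N$ and $N\sharp\Sigma$ can be realised as gluings of a common codimension-zero submanifold to a $(2m+2)$-handlebody along a middle surgery manifold of the form $\sharp^g(S^{2m+1}\times S^{2m+1})$, where the two gluings differ by a self-diffeomorphism $\phi$ of this middle manifold encoding the class of $\Sigma$ in $\bP_{d+1}$. The gluing theorem (\cref{thm:gluing-isotopy}) then reduces the desired equivalence $E_{N\sharp\Sigma}\simeq E_N$ to showing that the isotopy class $[\phi]\in\pi_0\Diff^+(\sharp^g(S^{2m+1}\times S^{2m+1}))$ acts trivially on the pertinent restricted gluing data in $\PSh(\DiscInf^\plus_{d,\le k})$.

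The final step is to combine the remaining two ingredients. The $\DiscInf$-presheaf version of Atiyah duality (\cref{thm:collapse-zigzag}) implies that the stabilisation of the mapping class group action on the gluing data is controlled by the induced action on a Thom spectrum of the stable tangent bundle, which is trivial because $W$ is parallelisable. This places the remaining obstruction in the kernel of the stabilisation map---a subgroup that can be identified with (or mapped into) the finite residual of $\pi_0\Diff^+(\sharp^g(S^{2m+1}\times S^{2m+1}))$ computed in \cref{thm:fr-prelim}. Since the $k$-truncated presheaf for $k<\infty$ should carry only finite-type information about the boundary, its mapping class group action is expected to factor through a profinite quotient, so that finite-residual elements act trivially and the equivalence follows.

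The main obstacle, and the technical heart of this plan, is the last step: establishing rigorously that truncation at finite $k$ really does render the mapping class group action finite-type in the precise sense needed, and matching the kernel of the Atiyah-duality stabilisation with (a subgroup of) the finite residual of \cref{thm:fr-prelim}. This is also the point at which the assumption $d\not\equiv 1\Mod{4}$ becomes essential: it guarantees that $\bP_{d+1}$ is the ``easy'' part of $\Theta_d$ detected by parallelisable fillings, so that Atiyah duality cleanly cuts down the obstruction to the finite-residual piece rather than leaving Kervaire-invariant contributions.
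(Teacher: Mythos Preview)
Your reduction to the ``in particular'' case and the subsequent gluing setup via \cref{thm:gluing-isotopy} are correct and match the paper's approach. The genuine gap is in your final two paragraphs, where you misidentify which ingredients close the argument.

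\cref{thm:collapse-zigzag} (Atiyah duality for $\DiscInf$-presheaves) plays \emph{no role} in the proof of \cref{thm:bpspheres}; it is used only for the converse direction, \cref{thm:2-truncated-cokerJ}. There is no ``stabilisation of the mapping class group action'' step, and the parallelisability of the filling $W$ does not enter here at all. What you are missing is a result of Mezher (cited in the paper as \cite[Theorem 3.16]{Mezher}): for any manifold $P$ and any \emph{finite} $k$, the group $\pi_0\,\Aut_{\PSh(\DiscInf^\plus_{d-1,\le k})}(E_P)$ is residually finite. Once the gluing reduction is made, the argument is pure group theory: \cref{thm:fr-prelim} says that the image of $\bP_{2n+2}$ in $\pi_0\,\Diff^+(W_g)$ lies in the finite residual for $g\ge 2$; any group homomorphism carries finite residuals into finite residuals; and by Mezher's theorem the finite residual of the target automorphism group vanishes. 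Hence the composition \eqref{equ:composition-gluing} is trivial, and \cref{thm:gluing-isotopy} finishes the proof. Your phrase ``truncation at finite $k$ should render the mapping class group action finite-type'' is gesturing at exactly this Mezher result, but you have not identified it, and you have interposed an irrelevant Atiyah-duality step in its place.

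A smaller correction: the hypothesis $d\not\equiv 1\Mod{4}$ enters only because \cref{thm:fr-prelim} is stated for $W_g=\sharp^g(S^n\times S^n)$ with $n$ odd, i.e.\ for the boundary manifold in dimension $d-1=2n$ with $d\equiv 3\Mod 4$. It has nothing to do with Atiyah duality ``cleanly cutting down'' obstructions or with Kervaire-invariant contributions, as your last paragraph suggests.
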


The proof of \cref{thm:bpspheres} has two main ingredients---one homotopy-theoretic and one group-theoretic. The homotopy-theoretic one---which we prove in \cref{sec:gluing}---is the following criterion to show that two manifolds obtained by gluing together the same pair of manifolds along different diffeomorphisms of their boundaries have equivalent $\DiscInf$-presheaves:

\begin{thm}\label{thm:gluing-isotopy} Fix $d$-manifolds $M_0$ and $M_1$ and two diffeomorphisms $\varphi_0,\varphi_1\colon \partial M_0\ra \partial M_1$ between their boundaries. If for some fixed $1\le k\le \infty$ we have
\[\smash{[E_{\varphi_0}]=[E_{\varphi_1}]\in\pi_0\,\Map_{\PSh(\DiscInf_{d-1,\le k})}(E_{\partial M_0},E_{\partial M_1}),}\] then there is an equivalence
 \[\smash{E_{M_0\cup_{\varphi_0 }M_1}\simeq E_{M_0\cup_{\varphi_1 }M_1}\quad \text{in}\quad\PSh(\DiscInf_{d,\le k}).}\]
Moreover, the analogous statement holds when $M_0$ and $M_1$ are oriented, the $\varphi_i$ are orientation-reversing, and $\DiscInf_{d-1,\le k}$ and $\DiscInf_{d,\le k}$ are replaced by $\DiscInf^\plus_{d-1,\le k}$ and $\DiscInf^\plus_{d,\le k}$.
\end{thm}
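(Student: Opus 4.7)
The plan is to derive \cref{thm:gluing-isotopy} as an essentially formal consequence of ingredient (1) from the introduction—namely the gluing description \cref{thm:gluing} for $\DiscInf$-presheaves of manifolds decomposed into two codimension zero pieces meeting along their common boundary. That result should express $E_{M_0\cup_\varphi M_1}$ as a colimit in $\PSh(\DiscInf_{d,\le k})$ of some diagram $D_\varphi$ assembled from $E_{M_0}$, $E_{M_1}$, and collar data, and the key feature one expects is that the datum of the gluing diffeomorphism $\varphi$ enters the diagram \emph{only} through the induced morphism $E_\varphi$ (or its evident collared upgrade) between the boundary presheaves. Schematically, the expected shape is a pushout
\[
E_{M_0\cup_\varphi M_1}\;\simeq\;\colim\bigl(E_{M_0}\xleftarrow{\iota_0}E_{\partial M_0\times\bfR}\xrightarrow{\iota_1\circ(E_\varphi\times\id)}E_{M_1}\bigr),
\]
where $\iota_0,\iota_1$ are induced by inclusions of open bicollars around the gluing locus.

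Granting such a gluing description, the proof is largely formal. First I would unpack \cref{thm:gluing} to identify explicitly the diagram $D_\varphi\colon\cJ\to\PSh(\DiscInf_{d,\le k})$ whose colimit recovers $E_{M_0\cup_\varphi M_1}$, and to verify that the assignment $\varphi\mapsto D_\varphi$ factors, up to equivalence of diagrams, through the map $\Diff(\partial M_0,\partial M_1)\to\Map_{\PSh(\DiscInf_{d-1,\le k})}(E_{\partial M_0},E_{\partial M_1})$ sending $\varphi\mapsto E_\varphi$. Second, given the hypothesis $[E_{\varphi_0}]=[E_{\varphi_1}]$, any choice of homotopy realising this equality induces an equivalence $D_{\varphi_0}\simeq D_{\varphi_1}$ in $\Fun(\cJ,\PSh(\DiscInf_{d,\le k}))$; applying $\colim$ then yields the desired equivalence $E_{M_0\cup_{\varphi_0}M_1}\simeq E_{M_0\cup_{\varphi_1}M_1}$ by homotopy invariance of colimits in $\infty$-categories. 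The oriented statement follows by exactly the same recipe, using the oriented version of \cref{thm:gluing} and noting that gluing oriented manifolds along orientation-reversing diffeomorphisms of their boundaries yields a canonically oriented manifold.

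The real content—and the main obstacle—therefore lies not in the deduction above but in proving \cref{thm:gluing} itself: this is a Seifert--van Kampen/Mayer--Vietoris-type statement for $\DiscInf$-presheaves that requires a careful analysis of framed configurations straddling the gluing hypersurface, which is what \cref{sec:gluing} will need to accomplish. A mild subtlety to keep in mind is the passage from $E_\varphi$ on $\partial M_i$ to its collared version on $\partial M_i\times\bfR$: one must ensure that homotopic maps on the boundary induce homotopic maps after crossing with $E_\bfR$, which is automatic since the construction is natural and $\pi_0$-classes are preserved by postcomposition with the functor $-\times E_\bfR$. Once \cref{thm:gluing} is in hand and this naturality is verified, the argument reduces to invoking the universal property of colimits.
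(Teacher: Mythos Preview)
Your deduction strategy is correct and essentially matches the paper's: once \cref{thm:gluing} is available, the paper observes that $(M_0,M_1,\varphi_i)\in\ManInf_d^{\midman}$ is sent by the top map $E^{\midman}$ to the triple $(E^\partial_{M_0},E^\partial_{M_1},E_{\varphi_i})$ in a pullback of presheaf categories, and the hypothesis $[E_{\varphi_0}]=[E_{\varphi_1}]$ forces these triples to be equivalent, whence applying the right vertical functor $\gamma^{\DiscInf}$ gives the conclusion. This is exactly your ``equivalent inputs $\Rightarrow$ equivalent outputs'' logic.

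One caution about your anticipated shape of \cref{thm:gluing}: the naive pushout formula $E_{M_0}\cup_{E_{\partial M_0\times\bfR}}E_{M_1}$ in $\PSh(\DiscInf_{d,\le k})$ is \emph{false}---presheaf pushouts are computed objectwise, and for $|S|\ge 2$ the configuration space $\Emb(S\times\bfR^d,M_0\cup_\varphi M_1)$ is not the pushout of the pieces, precisely because of configurations with points in both halves. The paper instead packages the gluing data using presheaves $E^\partial_{M_i}$ on the larger category $\DiscInf_d^\partial$ of discs-with-boundary, lands in the pullback $\PSh(\DiscInf_{d,\le k}^\partial)\times_{\PSh(\DiscInf_{d-1,\le k})}\PSh(\DiscInf_{d,\le k}^\partial)$, and recovers $E_{M_0\cup_\varphi M_1}$ via a functor $\gamma^{\DiscInf}=\gamma_! j_*$ built from a left and a right Kan extension (cf.\ also the module-theoretic alternative in \cref{rem:extensions-gluing}). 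The feature you correctly isolated---that the only dependence on $\varphi$ is through $E_\varphi$---survives in this formulation, so your formal deduction still applies verbatim with $\gamma^{\DiscInf}$ in place of the colimit.
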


The second ingredient in the proof of \cref{thm:bpspheres} is related to a certain group-theoretic difference between mapping class groups of manifolds and the analogue in the context of $\DiscInf$-presheaves, which was discovered in \cite{Mezher}. To explain this, recall that a discrete group $G$ is \emph{residually finite} if its \emph{finite residual}\[\textstyle{\fr(G) \coloneqq \big(\bigcap_{G' \trianglelefteq G \text{ with } [G':G] < \infty} G'\big) \trianglelefteq G}\] vanishes. It was observed in \cite{KRW-arithmetic} that there are 2-connected closed high-dimensional manifolds $M$ for which the group $\pi_0\,\Diff^{\plus}(M)$ of isotopy classes of orientation preserving diffeomorphisms is \emph{not} residually finite (more specifically this was shown for the iterated connected sums $W_g\coloneq \sharp^g (S^n\times S^n)$ for certain values of $g$ and $n$). On the contrary, the analogous group $\pi_0 \,\Aut_{\PSh(\DiscInf_{d,\le k})}(E_M)$ for the truncated $\DiscInf$-presheaf of $M$ \emph{is} residually finite for any $k<\infty$, as shown in \cite{Mezher}. The proof of \cref{thm:gluing-isotopy} will exploit this difference, based on an extension of the result from \cite{KRW-arithmetic} which we explain now. Recall that by Cerf's ``pseudoisotopy-implies-isotopy'' theorem and Smale's h-cobordism theorem, extending diffeomorphisms of $D^{d-1}$ along the inclusion of a hemisphere $D^{d-1}\subset S^{d-1}$ via the identity, and gluing two copies of $D^d$ together along a diffeomorphism of $S^{d-1}$, yields for $d\ge6$ isomorphisms of abelian groups
\begin{equation}\label{equ:tw-sphere}\smash{\pi_0\,\Diff_\partial(D^{d-1})\overset{\ext}{\cong}\pi_0\,\Diff^\plus(S^{d-1})\overset{\glue}{\cong}}\Theta_d.\end{equation} 
where $\pi_0\,\Diff_\partial(D^{d-1})$ is the group of isotopy classes of diffeomorphisms of a closed $(d-1)$-disc that pointwise fix the boundary and $\Theta_d$ is Kervaire--Milnor's group of oriented homotopy $d$-spheres \cite{KervaireMilnor}. For an oriented $(d-1)$-manifold $M$ with an embedded codimension $0$ disc $D^{d-1}\subset M$ compatible with the orientation, we write 
\[ \smash{\pi_0\,\Diff_\partial(D^{d-1})\xrightarrow{\ext_M} \pi_0\,\Diff^\plus_\partial(M)}\] for the morphism given by extending diffeomorphisms along $D^{d-1}\subset M$ by the identity. In these terms, the extension of the result of \cite{KRW-arithmetic} which we prove in \cref{sec:fr} is:

\begin{thm}\label{thm:fr-prelim}For $n\ge 3$ odd and $g\ge0$, setting $W_g\coloneq\sharp^g (S^n\times S^n)$, we have 
\[\fr\big(\pi_0\,\Diff^{\plus}(W_g)\big)=\begin{cases}\im\Big(\bP_{2n+2}\le\Theta_{2n+1}\overset{\eqref{equ:tw-sphere}}{\cong} \pi_0\,\Diff_\partial(D^{2n})\xra{\ext_{W_g}}\pi_0\,\Diff^{\plus}(W_g)\Big)&\text{if }g\ge2,\\
0&\text{if }g< 2.
\end{cases}\]
\end{thm}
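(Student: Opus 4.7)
The plan is to bracket $\fr(\pi_0\Diff^\plus(W_g))$ from above using the arithmetic structure of the mapping class group of $W_g$, and from below by adapting the obstruction-theoretic argument of \cite{KRW-arithmetic}.

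For the upper bound, I would invoke Kreck's description of $\pi_0\Diff^\plus(W_g)$ (as specialised to the odd-$n$ setting) via a central extension $1 \to \im(\ext_{W_g}) \to \pi_0\Diff^\plus(W_g) \to Q_g \to 1$ together with a further extension $1 \to A_g \to Q_g \to \Gamma_g \to 1$, where $\Gamma_g$ is an arithmetic subgroup of $\Sp_{2g}(\bfZ)$ (constrained by a tangential quadratic refinement), and $A_g$ is a finitely generated abelian ``rotation'' group on which $\Gamma_g$ acts $\bfZ$-linearly. Arithmetic groups are residually finite (Mal'cev), as is $A_g$, and the $\Gamma_g$-action on $A_g$ being through $\GL(A_g)$ forces $Q_g$ to be residually finite, so $\fr(\pi_0\Diff^\plus(W_g)) \subseteq \im(\ext_{W_g})$. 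Under \eqref{equ:tw-sphere} this identifies with a quotient of $\Theta_{2n+1}$. To further reduce to the image of $\bP_{2n+2}$, I would construct finite-group-valued characteristic classes of $W_g$-bundles---e.g.\,via the Becker--Gottlieb transfer applied to the universal $W_g$-bundle---whose restriction to $\im(\ext_{W_g})$ recovers the coker-$J$ component of the Kervaire--Milnor map $\Theta_{2n+1} \to \coker(J)_{2n+1}$, whose kernel is exactly $\bP_{2n+2}$.

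For the lower bound $\im(\ext_{W_g}|_{\bP_{2n+2}}) \subseteq \fr$ with $g \ge 2$, I would follow and extend the strategy of \cite{KRW-arithmetic}: realise $\im(\ext_{W_g}|_{\bP_{2n+2}})$ as the image of a connecting homomorphism for a central extension $1 \to \bP_{2n+2} \to \tilde\Gamma_g \to \Gamma_g \to 1$ sitting inside $\pi_0\Diff^\plus(W_g)$, classified by a class $\alpha \in H^2(\Gamma_g;\bP_{2n+2})$ whose restriction to every finite-index subgroup of $\Gamma_g$ remains non-trivial. This forces $\im(\ext_{W_g}|_{\bP_{2n+2}})$ into every finite-index normal subgroup of $\pi_0\Diff^\plus(W_g)$. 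The cases $g<2$ are then handled separately: $\pi_0\Diff^\plus(S^{2n})\cong \Theta_{2n+1}$ is finite for $g=0$, and for $g=1$ the arithmetic group $\Gamma_1$ is commensurable with $\SL_2(\bfZ)$, which is virtually free and has virtual cohomological dimension $1$; therefore $H^2$ of a finite-index subgroup of $\Gamma_1$ vanishes, the analogous obstruction class splits on a finite-index subgroup, and $\fr(\pi_0\Diff^\plus(W_1))=0$.

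The main obstacle is the non-vanishing of the class $\alpha$ on all finite-index subgroups of $\Gamma_g$ for $g \ge 2$: this requires explicitly identifying $\alpha$ as the class of the Kreck extension restricted over $\bP_{2n+2}$, and verifying its non-triviality under pullback via low-degree cohomology calculations for arithmetic symplectic groups with small cyclic coefficients. This is precisely the arithmetic-cohomological input that distinguishes the cases $g \ge 2$ from $g \le 1$, and forms the delicate heart of the argument.
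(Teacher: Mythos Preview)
Your overall strategy matches the paper's, but two steps have genuine gaps. First, your $g=1$ argument applies the vcd-$1$ observation to the symplectic quotient $\Gamma_1\le\Sp_2(\bfZ)$, but the central extension you need to trivialise is over $Q_1\cong(\bfZ^{2}\otimes\pi_n(\SO))\rtimes\Gamma_1$, which has vcd at least $2$, so $H^2$ with finite coefficients need not vanish on any finite-index subgroup. Restricting instead to the subgroup $\tilde\Gamma_1\le\pi_0\Diff^\plus(W_1)$ sitting over $\Gamma_1\le Q_1$ does not help: it has \emph{infinite} index, so $\fr(\tilde\Gamma_1)=0$ says nothing about $\fr(\pi_0\Diff^\plus(W_1))$. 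The paper gets around this by invoking the explicit identification of the extension class from \cite{Krannich-mcg}: the $\tfrac{\sgn}{8}$-part vanishes outright for $g=1$, and the remaining $\chi^2$-contribution is killed by pulling back along the finite-index self-embedding $(k,\id)$ of $Q_1$ for $k$ a multiple of the order of the relevant exotic sphere.

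Second, for the lower bound with $g\ge 2$, mere non-triviality of $\alpha\in H^2(\Gamma_g;\bP_{2n+2})$ on every finite-index subgroup does not force all of $\bP_{2n+2}$ into $\fr$: since $\bP_{2n+2}$ is cyclic of typically composite order, a finite-index normal subgroup can miss part of $\bP_{2n+2}$ whenever $\alpha\bmod p$ vanishes on some finite-index subgroup for \emph{some} prime $p$ dividing $|\bP_{2n+2}|$. The paper avoids this by lifting to $\bfZ$-coefficients, comparing with the signature extension from \cite{KRW-arithmetic} (for which $\fr(E(\sgn))=8\bfZ$ is known), and pushing forward along $\bfZ\to\bP_{2n+2}$. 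Finally, your Becker--Gottlieb proposal for cutting $\fr$ down from $\Theta_{2n+1}$ to $\bP_{2n+2}$ is too vague to assess---it is not clear such a transfer yields a genuine group homomorphism $\pi_0\Diff^\plus(W_g)\to\coker(J)_{2n+1}$---whereas the paper reads this containment directly from the explicit extension class, which (after the same $(k,\id)$-reduction) has the form $\mu\cdot\Sigma_P$ with $\Sigma_P\in\bP_{2n+2}$.
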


\begin{rem}\,
\begin{enumerate}
\item The morphism $\ext_{W_g}$ turns out to be injective (see \cref{sec:fr-computation} \ref{enum:gamma-ext}), so in the first case of \cref{thm:fr-prelim} we have $\fr(\pi_0\,\Diff^{\plus}(W_g))\cong\bP_{2n+2}$.
\item It was shown in \cite[Theorem C]{Mezher} that there is an inclusion
\[\smash{\fr(\pi_0\,\Diff^+(M))\subset \im\big(\Theta_{d}\overset{\eqref{equ:tw-sphere}}{\cong} \pi_0\,\Diff_\partial(D^{d-1})\xra{\ext_{M}}\pi_0\,\Diff^+(M)\big)}\] for \emph{any} closed smooth $2$-connected manifold $M$ of dimension $d-1\ge6$.
\item Under the assumptions of \cref{thm:fr-prelim}, the mapping class group $\pi_0\,\Diff^{\plus}(W_g)$ has a complete algebraic description in terms of the integral symplectic group $\Sp_{2g}(\bfZ)$, by the main results of \cite{Krannich-mcg} which build on work of Kreck \cite{Kreck}. The proof of \cref{thm:fr-prelim} uses this algebraic description to explicitly compute the finite residual. In the excluded case $n$ even, a similarly complete description of the mapping class group $\pi_0\,\Diff^{\plus}(W_g)$ has not yet been established.
\end{enumerate}
\end{rem}

Assuming the two ingredients Theorems \ref{thm:gluing-isotopy} and \ref{thm:fr-prelim}, we finish the proof of \cref{thm:bpspheres}:

\begin{proof}[Proof of \cref{thm:bpspheres}]Throughout, we implicitly use the topological Poincaré conjecture to identify the topological manifold underlying a homotopy sphere with the standard sphere.  

We first assume $d\le 6$. For $d\le 3$ or $d=5,6$, there is nothing to show since there are no nontrivial homotopy spheres. For $d=4$ there may be, but we show that $E_{M\sharp \Sigma}\simeq E_M$ in $\PSh(\DiscInf^{\plus}_d)$ for \emph{all} oriented homotopy $d$-spheres $\Sigma$ (and hence also in $\PSh(\DiscInf^{\plus}_{d,\leq k})$ for any $k$, by restriction). This is closely related to \cite[Theorem B]{KnudsenKupers}. By smoothing theory for embedding calculus  \cite[Theorem 5.21 (ii)]{KKoperadic}, it suffices to show that the two lifts of the topological tangent bundle $M\ra \BSTop(4)$ along $\BSO(4)\ra \BSTop(4)$ induced by the smooth structure of $M$ and of $M\sharp \Sigma$ are homotopic as lifts. For that it is enough to show that the two lifts of $D^4\ra \BSTop(4)$ induced by the smooth structure of $D^4$ and of $D^4\sharp\Sigma$ are homotopic as lifts, relative to the given lift on $\partial D^4$ induced by the standard smooth structure. But the obstruction for the existence of such a homotopy of lifts lies in $\pi_4(\Top(4)/\oO(4))$ and this group vanishes, since $\pi_4(\Top(4)/\oO(4))\cong\pi_4(\Top/\oO)$ by \cite[Theorem 8.7A]{FreedmanQuinn} and $\pi_4(\Top/\oO)$ vanishes by \cite[V.5.0 (5)]{KirbySiebenmann}.

\begin{figure}
	\begin{tikzpicture}
		\begin{scope}[xshift=-4cm]
		\draw (0,0) -- (2,0) to[in=0,out=0,looseness=2] (2,2) -- (0,2) to[in=180,out=180,looseness=2] cycle;
		\draw [thick,darkgreen] plot [smooth] coordinates {(1,0) (.5,.5) (1.5,1.5) (1,2)};
		\node at (-.25,1) {$M_0$};
		\node at (2.25,1) {$M_1$};
		\node at (.65,1) [darkgreen] {$P$};
		\end{scope}
		\node at (0,1) {$=$};
		\begin{scope}[xshift=2cm]
		\draw (2.5,0) -- (3.5,0) to[in=0,out=0,looseness=2] (3.5,2) -- (2.5,2);
		\draw (1,2) -- (0,2) to[in=180,out=180,looseness=2] (0,0) -- (1,0);
		\draw [thick,darkgreen] plot [smooth] coordinates {(1,0) (.5,.5) (1.5,1.5) (1,2)};
		\draw [thick,darkgreen,xshift=1.5cm] plot [smooth] coordinates {(1,0) (.5,.5) (1.5,1.5) (1,2)};
		\node at (-.25,1) {$M_0$};
		\node at (3.75,1) {$M_1$};
		\node at (1.75,1) [darkgreen] {$\underset{\cong}{\overset{\id_P}{\lra}}$};
		\end{scope}
		
		\node at (3.75,-.75) {\rotatebox{270}{$\leadsto$}};
		
		\begin{scope}[xshift=2cm,yshift=-3.5cm]
			\draw (2.5,0) -- (3.5,0) to[in=0,out=0,looseness=2] (3.5,2) -- (2.5,2);
			\draw (1,2) -- (0,2) to[in=180,out=180,looseness=2] (0,0) -- (1,0);
			\draw [thick,darkgreen] plot [smooth] coordinates {(1,0) (.5,.5) (1.5,1.5) (1,2)};
			\draw [thick,darkgreen,xshift=1.5cm] plot [smooth] coordinates {(1,0) (.5,.5) (1.5,1.5) (1,2)};
			\node at (-.25,1) {$M_0$};
			\node at (3.75,1) {$M_1$};
			\fill [white,opacity=.8] (.75,.5) rectangle (2.75,1.5);
			\node at (1.75,1) [darkgreen] {$\underset{\cong}{\overset{\ext_P(\tw(\Sigma)))}{\lra}}$};
		\end{scope}
		
		\node at (-3,-.75) {\rotatebox{270}{$\leadsto$}};
		
		\begin{scope}[xshift=-4cm,yshift=-3.5cm]
			\draw (0,0) -- (2,0) to[in=0,out=0,looseness=2] (2,2) -- (0,2) to[in=180,out=180,looseness=2] cycle;
			\draw [thick,darkgreen] plot [smooth] coordinates {(1,0) (.5,.5) (1.5,1.5) (1,2)};
			\node at (-.25,1) {$M_0$};
			\node at (2.25,1) {$M_1$};
			\fill [white,opacity=.75] (1,1) circle (.45cm);
			\draw [dashed] (1,1) circle (.45cm);
			\node [fill=white] at (1,1) {$\Sigma$};
		\end{scope}
		\node at (0,-2.5) {$=$};
	\end{tikzpicture}
	\caption{Constructing $M \# \Sigma$ by modifying the gluing diffeomorphism in a splitting $M=M_0\cup_PM_1$ along a codimension zero submanifold $P$.}
	\label{fig:splitting}
\end{figure}

Turning to the case $d\ge7$, we begin with a general observation: Given a decomposition $M=M_0\cup M_1$ of an oriented $d$-manifold $M$ into two codimension $0$ submanifolds $M_0,M_1\subset N$ that intersect in their common boundary $P\coloneq\partial M_0=\partial M_1$, then for any embedded disc $D^{d-1}\subset P$ compatible with the orientation, there is an orientation-preserving diffeomorphism $M\sharp \Sigma \cong M_0\cup_{\ext_P(\tw(\Sigma))}M_1$ where $\tw \colon \Theta_d \to \pi_0\,\Diff_\partial(D^{d-1})$ is the inverse of the isomorphism from \eqref{equ:tw-sphere} (see \cref{fig:splitting}). Combining this with \cref{thm:gluing-isotopy}, when given oriented homotopy $d$-spheres $\Sigma_0$ and $\Sigma_1\in\Theta_d$, to show $E_{M\sharp \Sigma_0}\simeq E_{M\sharp \Sigma_1}$ in $\PSh(\DiscInf_{d,\le k}^\plus)$ for some fixed $1\le k\le \infty$ it suffices to show that $\smash{\tw(\Sigma_0\sharp\overline{\Sigma_1})=\tw(\Sigma_0)\circ \tw(\Sigma_1)^{-1}}$ lies in the kernel of the composition of $\ext_P$ with the morphism $\smash{E\colon \pi_0\,\Diff^\plus(P)\ra \pi_0\,\Aut_{\PSh(\DiscInf^\plus_{d-1,\le k})}(E_{P})}$. Since $\bP_{d+1}=0$ for $d$ even by \cite[Theorem 5.1, Lemma 2.3]{KervaireMilnor} and we assumed $d\not\equiv 1\Mod{4}$, this shows that in order to prove \cref{thm:bpspheres}, it suffices to find for $d=2n+1$ with $n\ge3$ odd a decomposition $M=M_0\cup M_1$ as above, such that the following composition is trivial for $1\le k<\infty$:
\begin{equation}\label{equ:composition-gluing}\smash{\bP_{2n+2}\le \smash{\Theta_{2n+1}\overset{\eqref{equ:tw-sphere}}{\cong}\pi_0\,\Diff_\partial(D^{2n})\xrightarrow{\ext_P} \pi_0\,\Diff^\plus(P)\xlra{E} \pi_0\,\Aut_{\PSh(\DiscInf^\plus_{d,\le k})}(E_{P})}.}
\end{equation}
 The decomposition we use is the following:  for any fixed $g\ge0$, choose an embedding of the iterated boundary connected sum $V_g\coloneq \natural^g D^{n+1}\times S^n$ into $M$ (such an embedding exists since $V_g$ embeds into a closed disc $D^{2n+1}$ which in turn embeds into any nonempty manifold), set $M_0$ to be the image of the embedding and $M_1$ to be the closure of its complement. The submanifolds $M_0$ and $M_1$ intersect in their common boundary which is diffeomorphic to $W_{g}= \partial(V_g)=\sharp^g S^n \times S^n$. It thus suffices to show that \eqref{equ:composition-gluing} is trivial for $P=W_g$ for \emph{some} value of $g\ge0$. This is true for any $g\ge2$, since the composition $\bP_{2n+2}\ra \pi_0\,\Diff^{\plus}(W_g)$ lands in the finite residual of $\pi_0\,\Diff^{\plus}(W_g)$ by \cref{thm:fr-prelim} (it is in fact equal to it, but we will not need this), so as finite residuals are preserved by group homomorphisms, the image of the composition \eqref{equ:composition-gluing} is for $g\ge2$ contained in the finite residual of $\smash{\pi_0\,\Aut_{\PSh(\DiscInf^\plus_{d,\le k})}(E_{W_g})}$. But the latter finite residual is trivial by \cite[Theorem 3.16]{Mezher}, so the claim follows.
\end{proof}

\subsection{$\DiscInf$-presheaves of $\coker(J)$-spheres}\label{sec:coker-j-spheres}We now turn on the ``only if'' direction of \cref{bigthm:exotic-spheres}. In dimensions $d\le 6$ any homotopy $d$-sphere bounds a compact parallelisable manifold (combine \cite[p.\,504]{KervaireMilnor} with \cite[Theorem 3]{KervaireHomology}), so there is nothing to show. In dimensions $d\ge7$, we use that by Kervaire--Milnor's exact sequence \cite{KervaireMilnor}, the condition $\Sigma_0\sharp\overline{\Sigma}_1\in\bP_{d+1}$ in \cref{bigthm:exotic-spheres} is equivalent to $\Sigma_0$ and $\Sigma_1$ having the same image under the morphism $\Theta_d\ra\coker(J)_d$ from loc.cit., so the ``only if'' direction of \cref{bigthm:exotic-spheres} follows from:

\begin{thm}\label{thm:2-truncated-cokerJ} For oriented homotopy $d$-spheres $\Sigma_0$ and $\Sigma_1$ whose associated oriented $2$-truncated  $\DiscInf$-presheaves $E_{\Sigma_0}, E_{\Sigma_1}\in \PSh(\DiscInf_{d,\le 2}^{\plus})$ are equivalent, we have \[[\Sigma_0]=[\Sigma_1]\in\coker(J)_d.\] In particular, if $[\Sigma]\neq0\in\coker(J)_d$, then $E_{\Sigma}\not\simeq E_{S^d}$ in $\PSh(\DiscInf_{d,\le 2}^{\plus})$.
\end{thm}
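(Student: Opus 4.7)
The plan is to extract the normal invariant of a homotopy sphere from its $2$-truncated oriented $\DiscInf$-presheaf and then invoke Kervaire--Milnor's identification of $\coker(J)_d$ with the image of the normal-invariant map on $\Theta_d$. After identifying the underlying topological manifolds of $\Sigma_0$ and $\Sigma_1$ with $S^d$ via the topological Poincaré conjecture, both have stably trivial tangent bundles, and the class $[\Sigma_i]\in\coker(J)_d$ agrees with the Pontryagin--Thom class in $\pi_d^s/\im(J)$ obtained from any stable trivialisation of $T\Sigma_i$. So it suffices to produce from the presheaf equivalence a matching of these Pontryagin--Thom classes modulo $\im(J)$.

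First I would apply \cref{thm:collapse-zigzag}, which realises the Atiyah duality equivalence $D(\Sigma^\infty_+M)\simeq \Th(-TM)$ through a zigzag of constructions natural in equivalences of $\DiscInf$-presheaves involving only configurations of at most two points. Feeding in the hypothesised equivalence $E_{\Sigma_0}\simeq E_{\Sigma_1}$ in $\PSh(\DiscInf^\plus_{d,\le 2})$, this yields an equivalence $\Th(-T\Sigma_0)\simeq \Th(-T\Sigma_1)$ compatible with the underlying homotopy equivalence $\Sigma_0\simeq \Sigma_1$ recorded by $|S|=1$, and compatible with the collapse/diagonal data encoded by $|S|=2$---in particular with the fundamental class.

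Second, since both $\Sigma_i$ are stably parallelisable, choosing stable framings identifies each $\Th(-T\Sigma_i)$ with $\Sigma^{-d}\Sigma^\infty_+\Sigma_i$. Under these identifications, the equivalence from the previous step becomes a self-equivalence of $\Sigma^{-d}\Sigma^\infty_+S^d$ preserving the fundamental class, hence is detected by an element of $[S^d,G/O]$. By the interpretation of the $\DiscInf$-presheaf in terms of normal invariants supplied in \cref{sec:normal-invariants}, this element is precisely the difference of the normal invariants of $\Sigma_0$ and $\Sigma_1$. Its vanishing modulo $\im(J)$ together with the Kervaire--Milnor short exact sequence $\bP_{d+1}\hookrightarrow\Theta_d\twoheadrightarrow\coker(J)_d$ then yields $[\Sigma_0]=[\Sigma_1]\in\coker(J)_d$.

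The main obstacle is matching the abstract equivalence produced by \cref{thm:collapse-zigzag} with the classical normal invariant. Concretely, one must verify that the Pontryagin--Thom collapse map $S^d\to\Th(-T\Sigma)$ and the stable framing of $T\Sigma$ are both recovered from the $\le 2$-truncated presheaf in a compatible way: the framing from the $|S|=1$ data (via the oriented frame bundle $\Fr^\plus(T\Sigma)$), and the collapse from the $|S|=2$ data together with the diagonal embedding. This compatibility---presumably the content of \cref{sec:normal-invariants}---is what ensures that the ambiguity in the framing is exactly absorbed by $\im(J)$, so that the resulting invariant lives in $\coker(J)_d$ rather than in $\pi_d(G/O)$.
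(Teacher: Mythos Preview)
Your proposal identifies the right tool (\cref{thm:collapse-zigzag}) and the right target (the Kervaire--Milnor class), but the middle paragraph takes an unnecessary and not clearly justified detour. The paper's proof is more direct, so let me compare.

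The paper simply unpacks the definition of $[\Sigma]\in\coker(J)_d$: it is obtained from the stable collapse map $\bfS\to\Th(-T\Sigma)$ by choosing a stable framing compatible with the orientation to identify $\Th(-T\Sigma)\simeq\bfS^{-d}$, yielding an element of $\pi_d(\bfS)$ whose image in $\coker(J)_d$ is independent of the framing. Thus $[\Sigma]$ depends only on (a) the stable oriented tangent bundle classifier $\Sigma\to\BSO$ and (b) the homotopy class of the collapse map $\bfS\to\Th(-T\Sigma)$. Both are recovered from $E_\Sigma\in\PSh(\DiscInf^\plus_{d,\le 2})$ naturally in equivalences: (a) from the $1$-truncated data via $\xi_X$, and (b) from \cref{thm:collapse-zigzag}. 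That is the entire argument.

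Your second paragraph instead attempts to package the comparison as a self-equivalence of $\Sigma^{-d}\Sigma^\infty_+S^d$ and then asserts this is ``detected by an element of $[S^d,G/O]$''. This step is not justified: a self-equivalence of that spectrum is not in any evident way an element of $\pi_d(G/O)$, and \cref{sec:normal-invariants} does not supply such an interpretation---it only proves \cref{thm:collapse-zigzag}. You also speak of ``vanishing modulo $\im(J)$'', but you have not produced an element of $\pi_d(\bfS)$ whose class in $\coker(J)_d$ you are computing. The cleaner route, which is what the paper does and which your first paragraph already sets up, is to note that the equivalence of presheaves transports a stable framing of $\Sigma_0$ to one of $\Sigma_1$ (via naturality of $\xi_X$) and intertwines the two collapse maps (via \cref{thm:collapse-zigzag}); under the transported framing both collapse maps therefore give the \emph{same} element of $\pi_d(\bfS)$, hence the same class in $\coker(J)_d$. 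No $G/O$ or normal-invariant language is needed.
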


The main ingredient for the proof of \cref{thm:2-truncated-cokerJ} says, informally speaking, that the Atiyah duality equivalence for manifolds can be made natural in equivalences of $2$-truncated $\DiscInf$-presheaves. To explain the precise statement, recall that a $k$-truncated $\DiscInf$-presheaf $X\in\PSh(\DiscInf_{d,\le k})$ has for $k\ge1$ an underlying space, given by the orbits \[\lvert X\rvert\coloneq X(\bfR^d)/\Diff(\bfR^d)\in \cS\] of the action of $\smash{\Diff(\bfR^d)\simeq\Aut_{\PSh(\DiscInf_{d,\le k})}(\bfR^d)\simeq\oO(d)}$ on $X(\bfR^d)$ by functoriality, and this space comes with a $d$-dimensional vector bundle $\xi_X$ over it, classified by the map
\begin{equation}\label{equ:vb-over-x}\smash{\lvert X\vert ={X(\bfR^d)}/{\Diff(\bfR^d)}\xlra{\xi_X} {*}/{\Diff(\bfR^d)}\simeq \BO(d)}.\end{equation}
If $X\simeq E_M$ for a smooth $d$-manifold $M$, then $|X|\simeq M$ and $\xi_X$ models the tangent bundle $TM$ (see \cite[Proposition 5.10]{KKoperadic}). If $M$ is a \emph{closed} manifold, then the Pontryagin--Thom collapse map induced by the choice of an embedding $M\subset \bfR^{d+k}$ for $k\gg0$ gives a map $\bfS \to \Th(-TM)$ from the sphere spectrum to the Thom spectrum of the stable normal bundle of $M$, called the \emph{stable collapse map}, which is natural in diffeomorphisms. In \cref{sec:normal-invariants} we will show that it is even natural in equivalences of $2$-truncated presheaves, in the sense of the following theorem. In the statement and henceforth, we write $\IntMap(-,-)$ for mapping spectra and  $D(-) \coloneqq \IntMap(-,\bfS)$ for  Spanier--Whitehead duals.

\begin{thm}\label{thm:collapse-zigzag} To any $2$-truncated presheaf $X \in \PSh(\DiscInf_{d,\leq 2})$, one can associate a zig-zag of maps of spectra which is natural in equivalences in $\PSh(\DiscInf_{d,\leq 2})$,
	\begin{equation}\label{equ:collapse-zigzag}\bfS\ra D(\Sigma_+^\infty \lvert X\rvert )\leftarrow Z_X\ra \Th(-\xi_X)\quad\text{for some spectrum }Z_X,
	\end{equation}
	 and has the property that if $X=E_M$ for a closed smooth $d$-manifold $M$, then the wrong-way map is an equivalence and the map $\bfS\ra \Th(-TM)$ resulting from choosing an inverse is equivalent to the stable collapse map of $M$.
\end{thm}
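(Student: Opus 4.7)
First, I would construct the map $\bfS \to D(\Sigma_+^\infty |X|)$ as the Spanier--Whitehead dual of the collapse $\Sigma_+^\infty |X| \to \bfS$; this uses only the underlying space $|X|$ and is automatically natural in equivalences of $2$-truncated presheaves. The substance of the theorem lies in $Z_X$ and the remaining two arrows, which must exploit the $2$-point data of $X$.

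The plan is to use the $2$-point data to encode the ``diagonal'' in $|X|\times|X|$. Set $Y_X := X(\{1,2\}\times\bfR^d)_{h\Aut(\bfR^d)^2}$, equipped with the natural map $\phi_X \colon Y_X \to |X| \times |X|$ induced by the two forgetting maps $X(\{1,2\}\times\bfR^d)\to X(\bfR^d)$. When $X = E_M$, one has $Y_X\simeq \Conf_2(M)$ and $\phi_X$ is the standard open inclusion, and the tubular-neighborhood argument for the diagonal identifies the cofiber $\mathcal{C}_X := \cofib(\Sigma_+^\infty \phi_X)$ with $\Th(TM) \simeq \Th(\xi_X)$. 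The object $\mathcal{C}_X$, together with its natural parametrization over $|X|$ via either projection $|X|\times|X|\to|X|$, is the main new input beyond the $1$-truncated data.

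Following and adapting Naef--Safronov \cite[Section~4.2]{NaefSafronov}, I would then define $Z_X$ as a Thom-twisted fiberwise Spanier--Whitehead dualization of $\mathcal{C}_X$ over $|X|$, which comes naturally equipped with a map to $\Th(-\xi_X)$ and reproduces the Atiyah dual $D(\Sigma_+^\infty M)\simeq \Th(-TM)$ when $X=E_M$ for closed $M$. The map $Z_X \to D(\Sigma_+^\infty |X|)$ is obtained as the adjoint of a natural pairing $\Sigma_+^\infty |X| \wedge Z_X \to \bfS$ built from $\mathcal{C}_X$; in the manifold case this pairing is the Atiyah--duality evaluation, expressible as
\[
\Sigma_+^\infty M \wedge \Th(-TM) \simeq \Th(p_2^*(-TM))_{M\times M} \to \Th(TM\oplus(-TM)|_\Delta)_M \simeq \Sigma_+^\infty M \to \bfS,
\]
with the middle arrow the twisted Pontryagin--Thom collapse $\Sigma_+^\infty(M\times M)\to\mathcal{C}_X \simeq \Th(TM)$ smashed with $\Th(-TM)$ and restricted to the diagonal. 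The main obstacle is performing this twist-and-restrict operation naturally for a general $2$-truncated $X$, where there is no literal diagonal embedding to work with; this forces one to organize the construction in the $|X|$-parametrized setting and to identify $\mathcal{C}_X$ with the fiberwise $\xi_X$-Thom structure coming from the local picture of $\phi_X$ near $(x,x)$.

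Finally, I would verify that for $X = E_M$ with $M$ closed, the constructed map $Z_X \to D(\Sigma_+^\infty |X|)$ recovers the classical Atiyah--duality equivalence, which is a diagram chase through the constituent Pontryagin--Thom collapses. Inverting the middle arrow, the resulting composition $\bfS \to \Th(-TM)$ agrees with the stable collapse by the formal observation that, under the Atiyah--duality equivalence $D(\Sigma_+^\infty M)\simeq \Th(-TM)$, the Spanier--Whitehead dual of the collapse $\Sigma_+^\infty M \to \bfS$ is precisely the stable collapse $\bfS \to \Th(-TM)$.
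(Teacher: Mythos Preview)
Your overall strategy matches the paper's: build the zig-zag from the $2$-point data via the map $\phi_X\colon Y_X\to|X|\times|X|$, work fibrewise over $|X|$, and take a fibrewise dual to produce $Z_X$. The paper's $Z_X$ is precisely $p_!D_{|X|}\bigl(\Sigma^\infty_{|X|}C_{|X|}(\phi_X)\bigr)$, where $C_{|X|}(\phi_X)$ is the fibrewise mapping cone of $\phi_X$ over $|X|$ (via the first projection), so your sketch is headed to the right object.

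There is, however, a genuine gap in how you obtain the map $Z_X\to\Th(-\xi_X)$. You say $Z_X$ ``comes naturally equipped'' with such a map, and later speak of ``identifying $\mathcal{C}_X$ with the fibrewise $\xi_X$-Thom structure''. For general $X$ there is no such identification---only a \emph{map}. What produces it is a specific commutative square over $|X|$,
\[
\begin{tikzcd}[row sep=0.4cm]
S(\xi_X)\rar\dar[swap]{\pi} & Y_X\dar{\phi_X}\\
|X|\rar{\Delta} & |X|\times|X|,
\end{tikzcd}
\]
built from the functoriality of $X$ on morphisms in $\DiscInf_{d,\le 2}$: one uses the restriction map $\Map(\bfR^d\sqcup\bfR^d,\bfR^d)\to\Map(\bfR^d,\bfR^d)^{\times 2}$ together with the action map $X(\bfR^d)\times\Map(\bfR^d\sqcup\bfR^d,\bfR^d)\to X(\bfR^d\sqcup\bfR^d)$, and the $\GL_d(\bfR)$-equivariant identification $\Map(\bfR^d\sqcup\bfR^d,\bfR^d)/\Aut(\bfR^d)^{\times 2}\simeq\bfR^d\setminus\{0\}$. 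Taking fibrewise cones gives $C_{|X|}(\pi)\to C_{|X|}(\phi_X)$, and applying $p_!D_{|X|}\Sigma^\infty_{|X|}(-)$ yields $Z_X\to\Th(-\xi_X)$. Without this square you have no natural comparison between the (global) cofibre of $\phi_X$ and the (local) Thom structure; your phrase ``local picture of $\phi_X$ near $(x,x)$'' gestures at it but does not supply it, and an ``identification'' is too strong for arbitrary $X$.

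A smaller point: the paper obtains $Z_X\to D(\Sigma_+^\infty|X|)$ directly from the map $(|X|\times|X|)_{+,|X|}\to C_{|X|}(\phi_X)$ of retractive spaces (dualise fibrewise and adjoin), rather than via a pairing as you outline. As the paper itself remarks, this is the copairing side of the Naef--Safronov story; your pairing formulation is equivalent but you would still need the square above to carry it out.
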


\begin{rem}\label{rem:collapse}Some remarks on \cref{thm:collapse-zigzag}:
	\begin{enumerate}
		\item The construction in the proof \cref{thm:collapse-zigzag} is closely related to work of Naef--Safronov (c.f.\,\cite[Section 4.2]{NaefSafronov}, and \cref{rem:naefsafronov} below). There is  an alternative construction based on constructing collapse maps for $\DiscInf$-presheaves, inspired by \cite[Section 4]{KKsquare}.
		\item The first map in  \eqref{equ:collapse-zigzag} is induced by the unique map $\lvert X\rvert \ra \ast$, so considering only the right two maps in \eqref{equ:collapse-zigzag}, \cref{thm:collapse-zigzag} can be interpreted as saying that the Atiyah duality equivalence $D(\Sigma_+^\infty M) \simeq \Th(-TM)$ can be recovered from the $2$-truncated presheaf $E_M$ associated to $M$, naturally in equivalence of presheaves.
		\item A variant of \cref{thm:collapse-zigzag} holds in the setting of topological manifolds (see \cref{rem:topological-collapse}). 
		\item\label{enum:closed-presheaves} \cref{thm:collapse-zigzag} suggests the following notion: a $k$-truncated presheaf $X \in \PSh(\DiscInf_{d,\leq k})$ for $2\le k\le \infty$ is \emph{closed} if (a) $\lvert X\rvert$ lies in the full subcategory $\cS^\omega\subset \cS$ of compact objects in the $\infty$-category of spaces, (b) the wrong way map in \eqref{equ:collapse-zigzag} is an equivalence, (c) and the resulting map $c_X\colon \bfS\ra \Th(-\xi_X)$ exhibits $-\xi_X$ as the dualising spectrum of $\lvert X\rvert$ (in the sense of e.g.\ \cite[p.\,227]{LandReducibility}). In particular, in this case the underlying space $\lvert X\rvert$ is a $d$-dimensional Poincaré duality space (see p.\,228-220 loc.cit.).	
	\end{enumerate}
\end{rem}

Assuming \cref{thm:collapse-zigzag}, we prove \cref{thm:2-truncated-cokerJ} on $\DiscInf$-presheaves of $\coker(J)$-spheres:
\begin{proof}[Proof of \cref{thm:2-truncated-cokerJ}]
	We first recall the definition of the element $[\Sigma]\in\coker(J)_d$ associated to an oriented homotopy $d$-sphere (cf.\ \cite[Section 4]{KervaireMilnor}): Given an oriented homotopy $d$-sphere $\Sigma$, consider its stable collapse map $\bfS\ra\Th(-T\Sigma)$. Choosing a stable framing of $\Sigma$ gives an equivalence $\Th(-T\Sigma)\simeq \bfS^{-d}$, so the stable collapse map gives an element in $\pi_0(\bfS^{-d})\cong \pi_d(\bfS)$. Its image in $\coker(J)_d$ turns out to be independent of the choice of stable framing as long as the latter is chosen to be compatible with the given orientation of $\Sigma$; this defines the homomorphism $\Theta_d\ra\coker(J)_d$. In particular, we see that the class $[\Sigma]\in\coker(J)_d$ only depends on, (a) a classifier of the stable oriented tangent bundle $T\Sigma \colon \Sigma\ra \BSO$ and (b) the homotopy class of the stable collapse map $\bfS\ra \Th(-T\Sigma)$. By the discussion above, in particular \cref{thm:collapse-zigzag}, both of these can be recovered from the equivalence class of the $2$-truncated presheaf $E_\Sigma\in \PSh(\DiscInf^\plus_{d,\le 2})$, so the claim follows.
\end{proof}

\subsection{Digression: $\DiscInf$-structure spaces and $L$-theory}\label{sec:sdisc-ltheory}\cref{thm:collapse-zigzag} allows one to relate the $\DiscInf$-structure spaces from \cite{KKSDisc} to algebraic $L$-theory. We intend to take up this direction in future work, but briefly sketch how this goes:

We write $\smash{\Poinc^\simeq_d}$ for the $\infty$-groupoid of $d$-dimensional Poincaré spaces (in the sense of e.g.\,\cite[p.\,228-220]{LandReducibility}; this is a full subgroupoid of the core of the full subcategory $\cS^\omega$ of compact objects in the $\infty$-category $\cS$ of spaces) and $\Poinc^{\nu,\simeq}_d$ for the $\infty$-groupoid of $d$-dimensional Poincaré space together with a stable vector bundle refinement of its Spivak fibration (this can be constructed as a full subgroupoid of the core of the pullback of the composition $\smash{\cS^{\omega}_{/\bfZ\times \BO}\ra \cS^{\omega}_{/\Sp^\simeq}\ra\Sp}$ whose second functor is induced by taking colimits in the category $\Sp$ of spectra, along $\Sp_{\bfS/}\ra \Sp$).
In these terms \cref{thm:collapse-zigzag} yields a lift $\smash{\lvert-\rvert^\nu\colon\PSh^\cl(\DiscInf_{d})^\simeq\ra \Poinc^{\nu,\simeq}_d}$ of the functor $\smash{\lvert-\rvert\colon \PSh^\cl(\DiscInf_{d})^\simeq\ra \Poinc_d^\simeq}$, where $\smash{\PSh^\cl(\DiscInf_{d})\subset \PSh(\DiscInf_{d})}$ is the full subcategory of closed presheaves as in \cref{rem:collapse} \ref{enum:closed-presheaves}. Moreover, with some effort, one ought to be able to construct a commutative square 
\begin{equation}
\begin{tikzcd}[row sep=0.5cm]
\ManInf^{\cl,\cong}_d\rar{E}\arrow[d,"\forget",swap]&\PSh^\cl(\DiscInf_{d})^\simeq\dar{\lvert-\rvert^\nu}\\
\widetilde{\ManInf}^{\cl,\cong}_d\rar{\nu}&\Poinc^{\nu,\simeq}_d
\end{tikzcd}
\end{equation}
where $\smash{\ManInf^{\cl,\cong}_d}$ (respectively $\smash{\widetilde{\ManInf}^{\cl,\cong}_d}$) is the $\infty$-groupoid of closed $d$-dimensional smooth manifolds and spaces of diffeomorphisms (respectively block-diffeomorphisms) between them. The map $\nu$ is induced by taking stable normal bundles and constructed such that it agrees after taking fibres over $\Poinc_d^\simeq$ with the normal invariant map in surgery theory. The fibre of the top map at $E_M$ for $\smash{M\in \ManInf^{\cl,\cong}_d}$ is by definition the $\DiscInf$-structure space $\smash{S^{\DiscInf}(M)}$ as introduced in \cite{KKSDisc}, so taking horizontal fibres yields a map \[\smash{S^{\DiscInf}(M)\lra \fib_{\nu(M)}(\widetilde{\ManInf}^{\cl,\cong}_d\ra \Poinc^{\nu,\simeq}_d).}\]
Moreover, if $d\ge 5$, then by surgery theory, the collection of the components $(N, \nu(M)\simeq \nu(N))$ of the target whose underlying homotopy equivalence $M\simeq N$ is simple (with the finiteness structure on $M$ and $N$ induced from their manifold structures), is equivalent to loop space of the quadratic $L$-theory space $L^q(M)$ of $M$, so writing $S^{\DiscInf}(M)^s\subset S^{\DiscInf}(M)$ for the components of $(N,E_M\simeq E_N)$ whose underlying homotopy equivalence $M\simeq N$ is simple (note that we have $S^{\DiscInf}(M)=S^{\DiscInf}(M)^s$ if $M$ is simply connected, or more generally---by \cite[Corollary C]{NaefSafronov}---if the Dennis trace $\Wh(\pi_1(M))\ra H_1(LM,M)$ is injective), one obtains a map of the form
\begin{equation}\label{equ:map-to-l}
\smash{S^{\DiscInf}(M)^s\lra \Omega L^q(M).}
\end{equation}
As a result of \cite[Theorem A]{KKSDisc}, the source of this map depends up to equivalence only on the tangential $2$-type of $M$, and by the $\pi\text{-}\pi$-theorem the same holds for the target (in fact it only depends on the $1$-truncation of $M$ and the first Stiefel--Whitney class). It seems conceivable that the map \eqref{equ:map-to-l} also only depends on the tangential $2$-type of $M$.

\section{Gluing $\DiscInf$-presheaves}\label{sec:gluing}
This section serves to establish the first of the three ingredients assumed in \cref{sec:main-ideas}: \cref{thm:gluing-isotopy} on the dependence of $\DiscInf$-presheaves of glued manifolds on the gluing diffeomorphism. We will deduce this from a general gluing-result for presheaves.

\begin{nrem}Throughout this section, we assume familiarity with $\infty$-analogues of standard concepts in category theory, as developed in \cite{LurieHTT, LurieHA}. In particular Kan extensions (see \cite[Section 4.3]{LurieHTT}) and Beck--Chevalley conditions (also called \emph{adjointability conditions}; see \cite[4.7.4.13-4.7.4.15]{LurieHA}) will play a key role. 
\end{nrem}

\subsection{Gluing $\DiscInf$-presheaves}
\subsubsection{Manifolds with boundary}\label{sec:manpartial}
Write $\smash{\ManInf^\partial_d}$ for the $\infty$-category whose objects are $d$-manifolds $M$ (potentially non-compact and with boundary) and whose morphisms are spaces $\Emb^\partial(M,N)$ of embeddings $e\colon M\hookrightarrow N$ with $e^{-1}(\partial N)=\partial M$ (see \cref{sec:point-set-models-mfd-cats} for a precise construction of $\smash{\ManInf^\partial_d}$). It contains as a full subcategory the category $\ManInf_d$ of $d$-manifolds without boundary and embeddings between them. Taking boundaries yields a functor $\smash{\partial\colon \ManInf^\partial _{d}\ra \ManInf_{d-1}}$ which has a fully faithful left adjoint $\smash{\kappa\colon  \ManInf_{d-1}\ra \ManInf^\partial _{d}}$ given by taking products with $[0,\infty)$, i.e.\,it sends $P\in \ManInf_{d-1}$ to $P\times[0,\infty)$. The reason that these functors are adjoint is that the restriction map $\Emb^\partial (P\times[0,\infty),M)\ra \Emb(P,\partial M)$ is an equivalence for all $P\in\ManInf_{d-1}$ and $M\in\ManInf^\partial_d$, as its fibres are equivalent to spaces of collars, which are contractible. Note that the unit of this adjunction $\id\ra \partial \kappa$ is an equivalence, as $P=\partial(P\times[0,\infty))$. Since $\partial$ is right adjoint to $\kappa$, the left Kan extension $\smash{\partial_!\colon \PSh(\ManInf^\partial_d)\ra \PSh(\ManInf^\partial_{d-1})}$ is right adjoint to $\kappa_!$ (the fact that passing to left Kan extensions preserves adjoints follows by observing that it preserves the triangle identities that characterise adjunctions), so since restriction is also left adjoint to left Kan extension, we get an equivalence $\partial_!\simeq \kappa^*$ of functors $\PSh(\ManInf^\partial_d)\ra \PSh(\ManInf_{d-1})$. 

\subsubsection{$\DiscInf$-subcategories}Writing $\bfH^d\coloneq [0,\infty)\times \bfR^{d-1}$ for the $d$-dimensional halfspace, we consider the nested sequence of full subcategories of $\ManInf^\partial_d$
\begin{equation}\label{equ:cotower-disc-partial}\smash{\DiscInf_{d,\le 1}^\partial\subset \DiscInf_{d,\le 2}^\partial\subset \cdots \subset \DiscInf_{d,\le \infty}^\partial=\DiscInf_{d}^\partial}\end{equation}
 where $\smash{\DiscInf_{d,\le k}^\partial\subset \ManInf^\partial_d}$ is the full subcategory on those manifolds that are diffeomorphic to $S\times \bfR^d\sqcup T\times \bfH^{d-1}$ for finite sets $S$ and $T$ with $\lvert S\rvert+\lvert T\rvert \le k$.  Note that intersecting \eqref{equ:cotower-disc-partial} with $\smash{\ManInf_d\subset \ManInf^\partial_d}$ yields the nested sequence of full subcategories $\smash{\DiscInf_{d,\le\bullet}\subset \ManInf_d}$ on manifolds diffeomorphic to $S\times \bfR^d$ with $|S|\le \bullet$, as in the introduction. The functor $\kappa$ preserves these subcategories and gives a map of nested sequences $\kappa\colon  \DiscInf_{d-1,\le\bullet}\ra \DiscInf^\partial_{d,\le\bullet}$, and thus a map of towers (in the sense of \cite[Section 1.2]{KKoperadic}) of categories $\smash{\kappa^*\colon \PSh(\DiscInf^\partial_{d,\le\bullet})\ra \PSh(\DiscInf_{d-1,\le\bullet})}$ by restriction. The adjunction $\kappa\vdash \partial$ from \cref{sec:manpartial} restricts to adjunctions of the form $\smash{\kappa\colon \DiscInf_{d-1,\le k} \rightleftarrows\DiscInf^\partial_{d,\le k}\colon  \partial}$ for all $k$, so as above we obtain $\partial_!\simeq \kappa^*$ as functors $\PSh(\DiscInf^\partial_{d,\le k})\ra \PSh(\DiscInf_{d-1,\le k})$. Writing $\smash{\iota^\partial \colon \DiscInf_{d,\le \bullet}^\partial\hookrightarrow \ManInf^\partial_d}$ and $\smash{\iota \colon \DiscInf_{d-1,\le \bullet}\hookrightarrow \ManInf_{d-1}}$ for the respective inclusions, this implies that the square 
\begin{equation}\label{equ:bc-mandiscpartial}
\begin{tikzcd}[row sep=0.5cm]
\PSh(\ManInf_d^\partial)\rar{(\iota^\partial)^*}\arrow["\partial_!",swap,d]&\PSh(\DiscInf_{d,\le \bullet}^\partial)\dar{\partial_!}\\
\PSh(\ManInf_{d-1})\rar{\iota^*}&\PSh(\DiscInf_{d-1,\le \bullet})
\end{tikzcd}
\end{equation}
of towers of categories commutes in that the Beck--Chevalley transformation $\partial_! (\iota^\partial)^*\ra \iota^*\partial_!$ is an equivalence. We denote by \[E^\partial\coloneqq((\iota^\partial)^*\circ y)\colon \ManInf^\partial_d\lra \PSh(\DiscInf^\partial_{d,\le\bullet})\] the tower of restricted Yoneda embeddings.

\subsubsection{Gluing manifolds and $\DiscInf$-presheaves}\label{sec:glue-disc-ps}
Writing
\[\smash{\ManInf_d^{\midman}\coloneq \ManInf_d^\partial\times_{\ManInf_{d-1}}\ManInf_d^\partial}\] for the pullback of $\partial\colon \ManInf_d^\partial\ra \ManInf_{d-1} $ along itself, naturality of the Yoneda embedding yields a functor \begin{equation}\label{equ:glued-mfd-yoneda}\ManInf_d^{\midman}\ra \PSh(\ManInf_d^\partial)\times_{\PSh(\ManInf_{d-1})}\PSh(\ManInf_d^\partial)\end{equation} to the pullback of the left vertical functor in \eqref{equ:bc-mandiscpartial} along itself. 

\begin{rem}\label{rem:objects-in-manmid} An object in $\ManInf_d^{\midman}$ is given by a triple $(M_0,M_1,\varphi)$ where the $M_i$ are $d$-manifolds and $\varphi\colon \partial M_0\ra\partial M_1$ is a diffeomorphism between their boundaries. The spaces of morphisms in $\ManInf_d^{\midman}$ is given by the pullback of embedding spaces $\Emb^\partial(M_0,N_0)\times_{\Emb(\partial M_0,\partial N_1)}\Emb^{\partial}(M_1,N_1)$. Note that given a $d$-manifold $W$ without boundary with a decomposition $W=W_0\cup W_1$ into two codimension $0$ submanifolds that intersect in their common boundary $P\coloneq \partial_0W=\partial_1W$, we obtain an object $(W_0,W_1,\id_P)$, and it turns out that any object $(M_0,M_1,\varphi)$ is equivalent to one of this form, by considering the glued manifold $W=M_0\cup_{\varphi} M_1$.
\end{rem}

We now consider the functor 
\begin{equation}\label{equ:e-midman}E^{\midman}\colon \ManInf_d^{\midman}\lra \PSh(\DiscInf_{d,\le\bullet}^\partial)\times_{\PSh(\DiscInf_{d-1,\le\bullet})}\PSh(\DiscInf_{d,\le\bullet}^\partial).
\end{equation}
given by the post composition of \eqref{equ:glued-mfd-yoneda} with the functor $\PSh(\ManInf_d^\partial)\times_{\PSh(\ManInf_{d-1})}\PSh(\ManInf_d^\partial)\ra \PSh(\DiscInf_{d,\le\bullet}^\partial)\times_{\PSh(\DiscInf_{d-1,\le\bullet})}\PSh(\DiscInf_{d,\le\bullet}^\partial)$ induced by the commutative square \eqref{equ:bc-mandiscpartial}. In arity $1\le k\le \infty$, the functor \eqref{equ:e-midman} sends a triple $(M_0,M_1,\varphi)$ as in \cref{rem:objects-in-manmid} to the triple consisting of the presheaves $E^\partial_{M}$ and $E^\partial_{N})$ in $\smash{\PSh(\DiscInf^{\partial}_{d,\le k})}$, together with the equivalence $\partial_!(E^\partial_{M})\simeq E_{\partial M}\simeq E_{\partial N}\simeq \partial_!(E^\partial_{N})$ in $\smash{\PSh(\DiscInf_{d-1,\le k})}$ induced by the diffeomorphism $\partial M\cong \partial N$ and \eqref{equ:bc-mandiscpartial}. The goal of the remainder of this section is to prove that the latter data $(E^\partial_{M},E^\partial_{N},E_{\partial M}\simeq E_{\partial N})$ is sufficient to reconstruct the presheaf $E_{M\cup_\partial N}\in\PSh(\DiscInf_{d,\le k})$ of the glued manifold. More concretely, gluing manifolds along their boundary yields a functor $\smash{\gamma\colon \ManInf_d^{\midman}\ra \ManInf}$ (see \cref{sec:point-set-models-mfd-cats}) which features in the following theorem:

\begin{thm}\label{thm:gluing}There exists a commutative square of towers of categories
\[
\begin{tikzcd}[row sep=0.5cm]
\ManInf_d^\midman\arrow[d,"\gamma",swap]\rar{E^{\midman}}&\PSh(\DiscInf_{d,\le\bullet}^\partial)\times_{\PSh(\DiscInf_{d-1,\le\bullet})}\PSh(\DiscInf_{d,\le\bullet}^\partial)\dar{\gamma^{\DiscInf}}\\
\ManInf_{d}\rar{E}&\PSh(\DiscInf_{d,\le\bullet})
\end{tikzcd}
\]
for some functor of towers $\smash{\gamma^{\DiscInf}}$ with the indicated source and target.
\end{thm}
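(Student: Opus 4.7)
The plan is to construct $\gamma^{\DiscInf}$ as a two-step composite: first a natural ``fiber-product'' construction on presheaves producing an object in $\PSh(\DiscInf_{d,\le \bullet}^{\midman})$ (where $\DiscInf_{d,\le \bullet}^{\midman} \coloneqq \DiscInf_{d,\le \bullet}^\partial \times_{\DiscInf_{d-1,\le \bullet}} \DiscInf_{d,\le \bullet}^\partial$), followed by left Kan extension along a disc-level gluing functor. Commutativity of the square then reduces to recognising the embedding presheaf of the glued manifold $W = M_0 \cup_\varphi M_1$ as a colimit of ``decomposed'' embedding spaces into the two halves.

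\textbf{Construction.} As a first step, observe that the manifold-gluing functor $\gamma \colon \ManInf_d^{\midman} \to \ManInf_d$ restricts to a functor of towers $\gamma \colon \DiscInf_{d,\le \bullet}^{\midman} \to \DiscInf_{d,\le \bullet}$. Indeed, given objects $A = S_0 \times \bfR^d \sqcup T \times \bfH^d$ and $B = S_1 \times \bfR^d \sqcup T' \times \bfH^d$, any diffeomorphism $\psi\colon \partial A \cong \partial B$ induces a bijection $T \cong T'$ and the glued manifold is $(S_0 \sqcup S_1 \sqcup T) \times \bfR^d$ since $\bfH^d \cup_{\bfR^{d-1}} \bfH^d \cong \bfR^d$, with the bound on cardinalities preserved. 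As a second step, define a natural functor of towers
\[
\cF\colon \PSh(\DiscInf_{d,\le \bullet}^\partial) \times_{\PSh(\DiscInf_{d-1,\le \bullet})} \PSh(\DiscInf_{d,\le \bullet}^\partial) \lra \PSh(\DiscInf_{d,\le \bullet}^{\midman})
\]
by sending $(X_0, X_1, \alpha)$ to the presheaf
\[
(A, B, \psi)\ \longmapsto\ X_0(A) \times^h_{(\partial_! X_0)(\partial A)} X_1(B),
\]
where the right-hand map uses the collar-restriction $X_1(B) \to (\partial_! X_1)(\partial B)$, the identification $\psi^*$, and the boundary identification $\alpha$. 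Finally, set $\gamma^{\DiscInf} \coloneqq \gamma_! \circ \cF$, where $\gamma_!$ is left Kan extension along $\gamma$. All three ingredients are manifestly compatible with the tower structure.

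\textbf{Verification and main obstacle.} Unwinding the definitions, one identifies $\cF(E^\partial_{M_0}, E^\partial_{M_1}, \alpha)(A, B, \psi) \simeq \Emb^{\midman}((A, B, \psi), (M_0, M_1, \varphi))$, so commutativity of the square reduces to showing that for any $C = S \times \bfR^d \in \DiscInf_{d,\le k}$ the comparison
\[
\colim_{(A, B, \psi) \in (\DiscInf_{d,\le k}^{\midman})_{/C}} \Emb^{\midman}((A, B, \psi), (M_0, M_1, \varphi)) \lra \Emb(C, W)
\]
is an equivalence. This identification is the main obstacle, and I would attack it via cofinality and parameterised transversality. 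The cofinal subdiagram is indexed by ordered partitions $\sigma\colon S = S_0 \sqcup T \sqcup S_1$ with canonical objects $A_\sigma = S_0 \times \bfR^d \sqcup T \times \bfH^d$, $B_\sigma = S_1 \times \bfR^d \sqcup T \times \bfH^d$, $\psi_\sigma = \id$, whose gluing is canonically identified with $C$; transversality with the separating hypersurface $\partial M_0 = \partial M_1 \subset W$ shows that any embedding $C \to W$ can be isotoped so that each component $\{s\} \times \bfR^d$ either lies entirely in $M_0$, entirely in $M_1$, or meets the hypersurface transversely in a single affine hyperplane. The delicate aspects are carrying this out homotopy-coherently in families (to obtain an equivalence of spaces rather than only a bijection on $\pi_0$) and checking that the space of ``partitions and attaching data'' contributing to each fibre of the colimit is contractible so that the colimit does not overcount; inducting on $|S|$ with base case $|S| \le 1$ handled directly, or alternatively treating $\Emb(-,W)$ as a Weiss-cosheaf in $W$ and invoking the open cover by collar neighbourhoods of $M_0$ and $M_1$, should conclude the argument.
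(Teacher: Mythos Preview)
Your construction of $\gamma^{\DiscInf} = \gamma_! \circ \cF$ matches the paper's exactly: under the identification of $\PSh(\DiscInf^\partial_{d,\le\bullet}) \times_{\PSh(\DiscInf_{d-1,\le\bullet})} \PSh(\DiscInf^\partial_{d,\le\bullet})$ with presheaves on the \emph{pushout} $\DiscInf^{\ell r}_{d,\le\bullet} \coloneqq \DiscInf^\partial_{d,\le\bullet} \cup_{\DiscInf_{d-1,\le\bullet}} \DiscInf^\partial_{d,\le\bullet}$, your fibre-product functor $\cF$ is precisely right Kan extension $j_*$ along the comparison $j\colon \DiscInf^{\ell r}_{d,\le\bullet} \to \DiscInf^\midman_{d,\le\bullet}$, and the paper likewise sets $\gamma^{\DiscInf} = \gamma_! j_*$. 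Your identification $\cF(E^\midman(M_0,M_1,\varphi)) \simeq \Emb^\midman(-,(M_0,M_1,\varphi))$ is correct and follows since mapping spaces in a pullback of categories are pullbacks of mapping spaces.

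Where you diverge is the final step. Your transversality sketch is not yet a proof: an embedding $\bfR^d \hookrightarrow W$ does not generically have preimage of the hypersurface equal to a single affine hyperplane---you need shrinking and straightening, not just transversality---and the cofinality of your partition subcategory is asserted but not argued; the under-categories one must show contractible carry nontrivial moduli of hyperplane placements and reparametrisations, and you yourself flag the overcounting issue without resolving it. The paper sidesteps this entirely by factoring through the full manifold category, where $\gamma_!$ of a representable is trivially a representable by naturality of Yoneda, and then proving the Beck--Chevalley equivalence $\gamma_!(\iota^\midman)^* \simeq \iota^*\gamma_!$ via a Weiss-cover descent argument (complete covers of configuration spaces in the sense of Dugger--Isaksen). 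This is essentially your ``alternative'' suggestion, and it is the route you should commit to rather than the transversality one.
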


\begin{rem}[Variants of \cref{thm:gluing}]\label{rem:extensions-gluing}\ 
\begin{enumerate}
\item\label{enum:extensions-gluing-i} There is a variant of \cref{thm:gluing} for topological manifolds, by replacing all categories of (smooth) manifolds and (smooth) embeddings between them by the corresponding categories of topological manifolds and topological embeddings between them. The proof is the same as in the smooth setting.
\item \label{enum:extensions-gluing-ii}There is also a variant of \cref{thm:gluing} for manifolds with tangential structures with respect to a map $\theta\colon B\ra \BO(d)$ from some space $B$: Given a map $\theta\colon B\ra \BO(d)$ from a space $B$, there are variants $\ManInf_d^{\theta,\partial}$ and $\ManInf_{d-1}^{\theta}$ of $\ManInf_d^{\partial}$ and $\ManInf_{d-1}$ where the manifolds $M$ of dimension $d$ or $d-1$ are additionally equipped with a \emph{$\theta$-structure}, i.e.\,a bundle map $TM\ra \theta^*\gamma_d$ in the $d$-dimensional case or $TM\oplus\varepsilon \ra \theta^*\gamma_d$ in the $(d-1)$-dimensional case; here $\gamma_d$ denotes the universal $d$-dimensional bundle. The functor $\partial$ has {two} lifts to a functor $\smash{\partial\colon \ManInf_d^{\theta,\partial}\ra  \ManInf_{d-1}^{\partial}}$, induced by restricting a $\theta$-structure along the inclusion $T(\partial M)\oplus\varepsilon\ra TM$ using either an inwards or an outwards pointing vector field. Forming the pullback $\smash{\ManInf_d^{\theta,\midman}\coloneqq\ManInf^{\theta,\partial}_d\times_{\ManInf^{\theta}_{d-1}}\ManInf^{\theta,\partial}_d}$ with respect to the two different lifts of $\partial$, there is a lift of the gluing functor $\gamma$ to a functor $\ManInf^{\theta,\midman}_d\ra \ManInf^\theta_d$, and this functor fits into diagram as in \cref{thm:gluing} if one replaces $\smash{\DiscInf^{\partial}_{d,\le\bullet}}$ and $\smash{\DiscInf_{d-1,\le\bullet}}$ by $\smash{\DiscInf^{\theta,\partial}_{d,\le\bullet}\coloneq\DiscInf^{\partial}_{d,\le\bullet}\times_{\DiscInf^{\partial}_{d}}\ManInf_d^{\theta,\partial}}$ and $\smash{\DiscInf^{\theta}_{d-1,\le\bullet}\coloneq\DiscInf_{d-1,\le\bullet}\times_{\DiscInf_{d-1}}\ManInf_{d-1}^{\theta}}$. The topological version alluded to in \ref{enum:extensions-gluing-i} can be generalised analogously to include $\theta$-structures with respect to a map $\theta\colon B\ra\BTop(d)$. These versions with tangential structure can be established by adapting the proof of \cref{thm:gluing}.
\end{enumerate}
\end{rem}
\begin{rem}There is a similar commutative square of towers
\begin{equation}\label{equ:square-alg-mod}\hspace{0.6cm}
\begin{tikzcd}[row sep=0.5cm, column sep=0.3cm]
\ManInf_d^\midman\arrow[d,"\gamma",swap]\rar&\RMod(\PSh(\DiscInf_{d,\le\bullet}))\times_{\AAlg(\PSh(\DiscInf_{d,\le\bullet}))}\LMod(\PSh(\DiscInf_{d,\le\bullet}))\dar{\otimes}\\
\ManInf_{d}\rar{E}&\PSh(\DiscInf_{d,\le\bullet})
\end{tikzcd}
\end{equation}
where (a) $\LMod(-)$, $\RMod(-)$, $\AAlg(-)$ are the categories of pairs of an associative algebra and a left-module over it, pairs of an associative algebra and a right-module over it, and associative algebras, in a symmetric monoidal category respectively, (b) the symmetric monoidal structure on $\PSh(\DiscInf_{d,\le k})$ is a localisation of the Day convolution structure induced from the symmetric monoidal structure on $\DiscInf_d$ by disjoint union, and (c) $\otimes$ denotes the relative tensor product of modules. This square can be extracted from \cite[Theorem 5.3]{KKoperadic} (in the nontruncated case $k=\infty$, it is also an instance of what is known as \emph{$\otimes$-excision} in the theory of factorisation homology; see \cite[Proposition 3.24]{Francis} and \cite[Lemma 3.18]{AyalaFrancis}). It implies a version of \cref{thm:gluing-isotopy} where the condition \[\smash{[E_{\varphi_0}]=[E_{\varphi_1}]\in\pi_0\,\Map_{\PSh(\DiscInf_{d-1,\le k})}(E_{\partial M_0},E_{\partial M_1}),}\]
is replaced by 
\[\smash{[E_{\varphi_0\times\bfR}]=[E_{\varphi_1\times\bfR}]\in\pi_0\,\Map_{\AAlg(\PSh(\DiscInf_{d-1,\le k}))}(E_{\partial M_0\times\bfR},E_{\partial M_1\times\bfR}).}\]
However, this is not sufficient for the proof of \cref{thm:bpspheres}, since the result from \cite{Mezher} the proof relies on---namely that $\pi_0\,\Aut_{\PSh(\DiscInf_{d-1,\le k})}(E_M)$ is residually finite for $k<\infty$ and many manifolds $M$---is not available for $\pi_0\,\Aut_{\AAlg(\PSh(\DiscInf_{d-1,\le k}))}(E_{M\times\bfR})$. To circumvent this, it would be sufficient to know that the map of towers $\ManInf_{d-1}\ra \AAlg(\PSh(\DiscInf_{d,\le\bullet}))$ participating in \eqref{equ:square-alg-mod} factors over the map of towers $E\colon \ManInf_{d-1}\ra \PSh(\DiscInf_{d-1,\le\bullet})$. This can be shown to be indeed the case, but it is not immediate. We decided against pursuing this route in this work when we discovered the construction of the square in \cref{thm:gluing} which does not rely on $\otimes$-excision, avoids algebras, modules, and tensor products, and thus might be of independent interest.
\end{rem}

\begin{rem}[Point-set models]\label{sec:point-set-models-mfd-cats}Above we only gave informal definitions of the categories $\smash{\ManInf_d^{\partial}}$ and $\ManInf_{d-1}$ in that we only described their objects and spaces of morphisms, and we also only informally specified the functors $\smash{\partial\colon \ManInf_d^{\partial}\ra \ManInf_{d-1}}$ and $\smash{\gamma\colon  \ManInf_d^{\partial}\times_{ \ManInf_{d-1}}\ManInf_d^{\partial}\ra \ManInf_d}$ (all other constructions, however, were formally obtained from these). There are many options for how to construct these categories and functors, one is by defining them explicitly on the level of Kan-enriched categories and taking coheren nerves, another is to extract them from the constructions that were already carried out in   \cite[Section 3]{KKSDisc}. We explain the latter:

Recall (e.g.\,from \cite[Section 1.1]{KKoperadic}) that a \emph{double category} $\cM$ is a category-object in $\CatInf$, i.e.\,a simplicial object $\smash{\cM\in\Fun(\Delta^\op,\CatInf)}$ in categories that satisfies the Segal condition. It has a \emph{category of objects} $\ob(\cM)\coloneqq \cM_{[0]}$ and for $c,d\in\ob(\cM)$ a \emph{category of morphisms} $\cM_{c,d}\coloneqq \{c\}\times_{\cM_{[0]} }\cM_{[1]}\times_{\cM_{[0]}} \{d\}$ where the pullback is taken over the \emph{source and target functors} induced by $0,1\colon [0]\ra [1]$. Letting  source or target vary, we write $\cM_{c,-}\coloneqq \{c\}\times_{\cM_{[0]} }\cM_{[1]}$ and $\cM_{-,d}\colon \cM_{[1]}\times_{\cM_{[0]}} \{d\}$, and these come with functors $t\colon \cM_{c,-}\ra \ob(\cM)$ and  $s\colon \cM_{-,d}\ra \ob(\cM)$ by taking targets or sources, respectively. We have a \emph{composition functor} \[\smash{\cM_{c,-}\times_{\ob(\cM)}\cM_{-,d}   \simeq \{c\}\times_{\cM_{[0]} }\cM_{[2]}\times_{\cM_{[0]} }\{d\}\xlra{(0\le 2)^*} \{c\}\times_{\cM_{[0]} }\cM_{[1]}\times_{\cM_{[0]} }\{d\}= \cM_{c,d}}\] where the first equivalence uses the Segal condition. The \emph{opposite} $\cM^\op$ of a double category $\cM$ is the double category obtained by precomposing the simplicial object $\cM$ with the functor $\op\colon \Delta\ra \Delta$ with $\op([n])=[n]$ and $\op(\alpha\colon [m]\ra [n])(i)=n-\alpha(m-i))$. Note that one has $\ob(\cM)=\ob(\cM^{\op})$ and $\cM^{\op}_{c,d}=\cM_{d,c}$.

In \cite[Section 3, Steps \circled{1} and \circled{5}]{KKSDisc}, we constructed a double category $\ncBordInf_d$ of (possibly noncompact) $(d-1)$-manifolds and bordisms between them. It comes with an anti-involution $(-)^\rev\colon \ncBordInf_d\ra (\ncBordInf_d)^\op$ given by ``reversing bordisms'', which is---in the language of Section 3 Steps \circled{1} loc.cit.---induced by sending a $[p]$-walled $d$-manifold $(W\subset \bfR\times \bfR^\infty, \mu\colon [p]\ra \bfR)$ to $((-1\times_{\id_{\bfR^\infty}})(W),((-1)\circ \mu\circ (i\mapsto p-i))\colon [p]\ra \bfR)$. Moreover, there is an equivalence $(\ncBordInf_{d})_{\varnothing,\varnothing}\simeq \ob(\ncBordInf_{d+1})$ induced by sending a $[1]$-walled $d$-manifold $(W\subset \bfR\times \bfR^\infty, \mu\colon [1]\ra \bfR)$ to $(\bfR\times W|_{[\mu(0),\mu(1)]}, 0\colon [0]\ra \bfR)$. We set \[\ManInf_{d-1}\coloneq \ob(\ncBordInf_d)\quad\text{and}\quad\ManInf_{d}^\partial\coloneq (\ncBordInf_d)_{\varnothing,-}.\] The functor $\partial\colon \ManInf_{d}^\partial\ra \ManInf_{d-1}$ is defined as $t\colon (\ncBordInf_d)_{\varnothing,-}\ra \ob(\ncBordInf_d)$ and the functor $\smash{\gamma\colon \ManInf_d^\midman=\ManInf_d^{\partial}\times_{ \ManInf_{d-1}}\ManInf_d^{\partial}\ra \ManInf_d}$ is given by the composition of the equivalence $\smash{(\ncBordInf_d)_{\varnothing,-}\times_{\ob(\ncBordInf_d)}(\ncBordInf_d)_{\varnothing,-}\simeq (\ncBordInf_d)_{\varnothing,-}\times_{\ob(\ncBordInf_d)}(\ncBordInf_d)_{-,\varnothing}}$ which is induced by the anti-involution $(-)^\rev$ in the second argument, with the composition functor $\smash{(\ncBordInf_d)_{\varnothing,-}\times_{\ob(\ncBordInf_d)}(\ncBordInf_d)_{-,\varnothing}\ra (\ncBordInf_d)_{\varnothing,\varnothing}\simeq \ob(\ncBordInf_{d+1})=\ManInf_d}$.
\end{rem}

\subsection{Some category theory}The proof of \cref{thm:gluing} relies on the following lemma:
\begin{lem}\label{lem:pushout-lem}Fix categories $\cD_{01},\cD_0,\cD_1$ and fully faithful functors $\kappa_i\colon \cD_{01}\hookrightarrow \cD_i$ for $i=0,1$ that admit right adjoints $\partial_i\colon \cD_i\ra \cD_{01}$.
\begin{enumerate}
\item\label{enum:pushout-ff} The natural functor $\iota\colon \cD_0\cup_{\cD_{01}}\cD_1\ra \cD_0\times_{\cD_{01}}\cD_1$ from the pushout of the $\kappa_i$ to the pullback of the $\partial_i$, is fully faithful.
\item\label{enum:right-Kan}  The value at an object $(d_0,d_1)\in \cD_0\times_{\cD_{01}}\cD_1$ of the unit $X\ra \iota_*\iota^*X$ of the adjunction $\iota^*\colon\PSh(\cD_0\times_{\cD_{01}}\cD_1)\rightleftarrows\PSh(\cD_0\cup_{\cD_{01}}\cD_1)\colon \iota_*$ between restriction and right Kan extension is naturally equivalent to the map from the top left-corner in the square\vspace{-0.1cm}
\[
\begin{tikzcd}[row sep=0.5cm]
X(d_0,d_1)\rar{(\id,\epsilon_1)^*}\arrow["{(\epsilon_0,\id)^*}",d,swap]&X(d_0,\kappa_1\partial_1(d_1))\dar{(\epsilon_0,\id)^*}\\
X(\kappa_0\partial_0(d_0),d_1)\rar{(\id,\epsilon_1)^*}&X(\kappa_0\partial_0(d_0),\kappa_1\partial_1(d_1))
\end{tikzcd}
\]
to the pullback of the remaining entries.  Here the maps $\epsilon_i$ are the counits of $\kappa_i\vdash \partial_i$.
\item\label{enum:BC} For full subcategories $\cD_{01}'\subset \cD_{01}$ and $\cD_{i}'\subset \cD_{i}$ for $i=0,1$ to which the functors $\kappa_i$ and $\partial_i$ restrict, the diagram of categories of presheaves
\[
\begin{tikzcd}[row sep=0.5cm]
\PSh(\cD_0\cup_{\cD_{01}}\cD_1)\arrow[d,swap,"\iota_*"]\rar{\inc^*}&\PSh(\cD'_0\cup_{\cD'_{01}}\cD'_1)\dar{\iota'_*}\\
\PSh(\cD_0\times_{\cD_{01}}\cD_1)\rar{\inc^*}&\PSh(\cD'_0\times_{\cD'_{01}}\cD'_1)
\end{tikzcd}
\]
commutes, i.e.\,the Beck--Chevalley transformation $\inc^*\iota_*\ra \iota_*'\inc^*$ is an equivalence.
\item\label{enum:restricted-yoneda} In the situation of \ref{enum:BC}, the unit $\id\ra \iota'_*(\iota')^*$ is an equivalence on the essential image of the restricted Yoneda embedding $(\inc^* \circ y)\colon \cD_{0}\times_{\cD_{01}}\cD_1\ra \PSh(\cD'_{0}\times_{\cD'_{01}}\cD'_1)$.
\end{enumerate}
\end{lem}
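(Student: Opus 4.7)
\textbf{Proof plan for Lemma \ref{lem:pushout-lem}.}

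For (i), the plan is to identify the image of $\iota$ with a specific full subcategory of $\cP \coloneq \cD_0 \times_{\cD_{01}} \cD_1$. I will consider the two full subcategories $\cP_0, \cP_1 \subset \cP$ spanned respectively by objects of the form $(d_0, \kappa_1 \partial_0 d_0)$ and $(\kappa_0 \partial_1 d_1, d_1)$. Using the adjunction identities $\Map_{\cD_1}(\kappa_1 a, b) \simeq \Map_{\cD_{01}}(a, \partial_1 b)$ and its $\kappa_0 \dashv \partial_0$ analogue, the defining pullback for mapping spaces in $\cP$ collapses on these subcategories, yielding equivalences $\cP_0 \simeq \cD_0$, $\cP_1 \simeq \cD_1$, and $\cP_0 \cap \cP_1 \simeq \cD_{01}$ (the latter consisting of the objects $(\kappa_0 c, \kappa_1 c)$). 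These equivalences assemble into a cocone that, via the universal property of the pushout, produces a functor $\cC \to \cP_0 \cup \cP_1 \hookrightarrow \cP$ agreeing with $\iota$. I will then verify that the first arrow is an equivalence by arguing that $\cP_0 \cup \cP_1$ is itself the pushout of $\cD_0 \leftarrow \cD_{01} \to \cD_1$ in $\CatInf$, since it arises as the union of two full subcategories with full intersection that together realize the desired cocone.

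For (ii), I will use the pointwise formula for right Kan extension: $\iota_* \iota^* X$ evaluated at $(d_0, d_1) \in \cP$ is the limit of $X \circ \iota$ over the slice $\cC_{/(d_0, d_1)}$. The key step is to identify this slice with the pushout of categories $(\cD_0)_{/d_0} \cup_{(\cD_{01})_{/\partial d}} (\cD_1)_{/d_1}$, where $\partial d \coloneq \partial_0 d_0 = \partial_1 d_1$. This is once more an adjunction computation: for $e_0 \in \cD_0$ one has $\Map_\cP((e_0, \kappa_1 \partial_0 e_0), (d_0, d_1)) \simeq \Map_{\cD_0}(e_0, d_0)$, and analogously for the $\cD_1$ and $\cD_{01}$ contributions. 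Since limits over a pushout of categories compute as pullbacks of limits, and since each of the three slice pieces has a terminal object (namely $d_0$, $d_1$, and $\partial d$), the three limits collapse to the values $X(d_0, \kappa_1 \partial_1 d_1)$, $X(\kappa_0 \partial_0 d_0, d_1)$, and $X(\kappa_0 \partial_0 d_0, \kappa_1 \partial_1 d_1)$, yielding precisely the pullback square displayed in the statement.

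For (iii), both the Beck--Chevalley source $\subset^* \iota_* Y$ and target $\iota'_* \subset^* Y$ evaluated at $(d_0, d_1) \in \cD'_0 \times_{\cD'_{01}} \cD'_1$ will be computed by the formula of (ii) via the same pullback. The only observation required is that the four points appearing there, namely $(d_0, d_1)$, $(d_0, \kappa_1 \partial_1 d_1)$, $(\kappa_0 \partial_0 d_0, d_1)$, and $(\kappa_0 \partial_0 d_0, \kappa_1 \partial_1 d_1)$, all lie in $\cD'_0 \times_{\cD'_{01}} \cD'_1$ whenever $(d_0, d_1)$ does, which holds by hypothesis that $\kappa_i$ and $\partial_i$ restrict to the primed categories. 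For (iv), naturality of units together with (iii) identifies the unit at $\subset^* y_\cP(d_0, d_1)$ with the restriction to $\cP'$ of the unit at $y_\cP(d_0, d_1)$. By (ii), the latter is the comparison from $\Map_\cP(-, (d_0, d_1))$ to a pullback of its values at the three corners; using the adjunctions to simplify each corner, the target of this comparison becomes exactly the defining pullback of $\Map_\cP$ as a fibre product of mapping spaces in $\cD_0$, $\cD_1$, and $\cD_{01}$, and hence the unit is an equivalence.

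The main obstacle I expect is in (i): promoting the natural functor $\cC \to \cP_0 \cup \cP_1$ to an equivalence rather than merely a functor. The cleanest route, as outlined above, is to avoid explicit simplicial models for the $\infty$-categorical pushout entirely, by recognizing $\cP_0 \cup \cP_1 \subset \cP$ as a pushout in $\CatInf$ with the same cocone data, so that uniqueness of pushouts gives the equivalence. Once (i) is in hand, parts (ii)--(iv) are essentially formal consequences, the main technical step being the identification of $\cC_{/(d_0, d_1)}$ as a pushout of slice categories in (ii).
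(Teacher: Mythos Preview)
Your overall architecture for parts (ii)--(iv) matches the paper's: compute $\iota_*\iota^*X$ pointwise via the limit over $\cC_{/(d_0,d_1)}$, identify that slice as a pushout of three pieces each having a terminal object, and then observe that the resulting pullback formula is stable under restriction to the primed subcategories and recovers the defining fibre-product description of mapping spaces in $\cP$ on representables. That is exactly what the paper does.

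The gap is in (i). Your plan is to show that the full subcategory $\cP_0\cup\cP_1\subset\cP$ \emph{is} the pushout $\cD_0\cup_{\cD_{01}}\cD_1$, justified by the sentence ``since it arises as the union of two full subcategories with full intersection that together realize the desired cocone.'' That implication is false in $\CatInf$ without further input: a full subcategory spanned by the union of two full subcategories need not be the pushout along their intersection, because mapping spaces between objects coming from different pieces can be larger in the ambient category than anything the pushout produces. Concretely, you must show that for $d_0\in\cD_0$ and $d_1\in\cD_1$ the map
\[
\Map_{\cD_0\cup_{\cD_{01}}\cD_1}(d_0,d_1)\longrightarrow \Map_{\cP}\big((d_0,\kappa_1\partial_0 d_0),(\kappa_0\partial_1 d_1,d_1)\big)\simeq \Map_{\cD_0}(d_0,\kappa_0\partial_1 d_1)
\]
is an equivalence, and nothing in your outline addresses the left-hand side. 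The paper handles this by invoking a nontrivial external result (Theorem~0.1 of \cite{HRSpushouts}): first, pushouts of fully faithful functors are fully faithful, so $\cD_i\hookrightarrow\cD_0\cup_{\cD_{01}}\cD_1$; second, that theorem gives an explicit formula for the cross-term mapping space as the realisation $\lvert (\cD_0)_{d_0/}\times_{\cD_0}\cD_{01}\times_{\cD_1}(\cD_1)_{/d_1}\rvert$, which the paper then collapses via a cofinality argument (using the terminal object of $\cD_{01}\times_{\cD_1}(\cD_1)_{/d_1}$ supplied by the counit $\epsilon_1$ and smoothness of a cocartesian fibration) to $\Map_{\cD_0}(d_0,\kappa_0\partial_1 d_1)$. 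You need either this input or an equivalent computation; ``recognising'' $\cP_0\cup\cP_1$ as a pushout is exactly the statement to be proved, not a method for proving it.

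Note also that your proof of (ii) implicitly relies on (i): identifying $\cC_{/(d_0,d_1)}$ with a pushout of slice categories is again an instance of the same phenomenon (the paper makes this explicit, deducing the pushout square of overcategories from part (i)). So the gap in (i) propagates.
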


\begin{proof}
We begin with a few preliminary observations:
\begin{enumerate}[leftmargin=*,label=(\alph*)]
\item\label{enum:obs-i} Using the adjunctions   $\kappa_i\vdash\partial_i$ and that mapping spaces in pullbacks of categories are given by the pullbacks of mapping spaces, one sees that the two projections $\cD_0\times_{\cD_{01}}\cD_1\ra\cD_i$ 
induce for $d_i,d_i'\in\cD_i$ equivalences  \vspace{-0.1cm}\begin{align*}\Map_{\cD_0\times_{\cD_{01}}\cD_1}((d_0,\kappa_1\partial_0(d_0)),(d_0',d_1'))&\overset{\simeq}\lra \Map_{\cD_0}(d_0,d_0'), \\[-0.1cm] \Map_{\cD_0\times_{\cD_{01}}\cD_1}((\kappa_0\partial_1(d_1),d_1),(d_0',d_1'))&\overset{\simeq}\lra \Map_{\cD_1}(d_1,d_1').\end{align*}
\item \label{enum:obs-ii}The functor $\iota$ in \ref{enum:pushout-ff} is induced by the two functors $(\id,\kappa_1\partial_0)\colon \cD_0\ra \cD_0\times_{\cD_{01}}\cD_1$ and $(\kappa_0\partial_1,\id)\colon \cD_1\ra \cD_0\times_{\cD_{01}}\cD_1$ which are both fully faithful as a result of \ref{enum:obs-i}, and agree on $\cD_{01}$ since $\partial_1 \kappa_1 \simeq \id \simeq \partial_0\kappa_0$ as the $\kappa_i$ were assumed to be fully faithful.
\item\label{enum:obs-iii} Since fully faithful functors are preserved by taking pushouts \cite[Theorem 0.1]{HRSpushouts}, the inclusion functors $\cD_i\ra \cD_0\cup_{\cD_{01}}\cD_1$ are fully faithful for $i=0,1$.
\end{enumerate}
Combining \ref{enum:obs-ii} with \ref{enum:obs-iii} and observing that $\cD_i \to \cD_0 \cup_{\cD_{01}} \cD_1$ for $i=0,1$ are jointly essentially surjective, to show the claim in \ref{enum:pushout-ff} it suffices to show that for $d_i\in\cD_i$, the functor induces an equivalence on the mapping space from $d_0$ to $d_1$ and on that from $d_1$ to $d_0$. By symmetry, it suffices to check the former. By Theorem 0.1 loc.cit., the map 
\begin{equation}\label{equ:mapping-space-pushout}
\begin{tikzcd}[row sep=0cm, column sep=0.3cm]
\lvert (\cD_0)_{d_0/}\times_{\cD_{0}}\cD_{01}\times_{\cD_1}(\cD_1)_{/d_1}\rvert\rar& \lvert 
(\cD_0\cup_{\cD_{01}}\cD_1)_{d_0/}\times_{(\cD_0\cup_{\cD_{01}}\cD_1)}(\cD_0\cup_{\cD_{01}}\cD_1)_{/d_1}\rvert \\
&\simeq \Map_{\cD_0\cup_{\cD_{01}}\cD_1}(d_0,d_1)
\end{tikzcd}
\end{equation}
induced by the inclusions of $\cD_0,\cD_1,\cD_{01}$ into $\cD_0\cup_{\cD_{01}}\cD_1$ is an equivalence (for the equivalence in \eqref{equ:mapping-space-pushout}, see Remark 3.4 loc.cit.). Here $\lvert-\rvert\colon \CatInf\ra\cS$ denotes the left adjoint to the inclusion $\cS\subset \CatInf$, and $(-)_{(-)/}$ and $(-)_{/(-)}$ indicate under- and overcategories. Now consider the commutative diagram of pullbacks of categories
\begin{equation}\label{equ:pullback-diagram}
\begin{tikzcd}[row sep=0.5cm]
\Map_{\cD_0}(d_0,\kappa_0\partial_1(d_1))\rar\dar&(\cD_0)_{d_0/}\times_{\cD_{0}}\big(\cD_{01}\times_{\cD_1}(\cD_1)_{/d_1}\big)\rar{\pr_1}\dar{\pr_2} &(\cD_{0})_{d_0/}\dar{\forget}\\
*\rar{(\partial_1(d_1),\kappa_1\partial_1(d_1)\xra{\epsilon_1} d_1)}&\cD_{01}\times_{\cD_1}(\cD_1)_{/d_1}\rar{\kappa_0\pr_1}& \cD_0.
\end{tikzcd}
\end{equation}
We claim that the bottom left horizontal map is the inclusion of a terminal object. To show this, we fix an object $(c_{01},f\colon \kappa_1(c_{01})\ra d_1)\in \cD_{01}\times_{\cD_1}(\cD_1)_{/d_1}$ and consider the commutative diagram of mapping spaces
\[\hspace{-0.1cm}
\begin{tikzcd}[column sep=0.2cm]
\Map_{\cD_{01}\times_{\cD_1}(\cD_1)_{/d_1}}\big((c_{01},f),(\partial_1(d_1),\epsilon_1)\big)\arrow[r,"\pr_2"]\arrow[d,"\pr_1"]&\Map_{(\cD_1)/_{d_1}}(f,\epsilon_1)\arrow[r]\arrow[d,"\forget"]&*\arrow[d,"f"]\\
\Map_{\cD_{01}}(c_{01},\partial_1(d_1))\rar{\kappa_1}&\Map_{\cD_1}(\kappa_1(c_{01}),\kappa_1(\partial_1(d_1)))\rar{(\epsilon_1)_*}&\Map_{\cD_1}(\kappa_1(c_{01}),d_1).
\end{tikzcd}
\]
Both squares are pullbacks, by the description of mapping spaces in pullbacks and overcategories as pullbacks and fibres, respectively. Hence the outer square is also a pullback. But the bottom composition is an equivalence by the adjunction $\kappa_1\vdash\partial_1$, hence the top composition is an equivalence as well, so the top-left corner is contractible which confirms that $(\partial_1(d_1),\epsilon_1)$ is a terminal object. Now note that the rightmost vertical map in \eqref{equ:pullback-diagram} is a cocartesian fibration (it classifies the functor $\Map_{\cD_0}(d_0,-)$), so as cocartesian fibrations are closed under pullback \cite[2.4.2.3]{LurieHTT}, $\pr_2$ in \eqref{equ:pullback-diagram} is a cocartesian fibration as well and is thus by 4.1.2.15 loc.cit.\,smooth in the sense of 4.1.2.9 loc.cit.. Since we saw that the bottom left horizontal map is the inclusion of a terminal object and therefore cofinal, it follows from 4.1.2.10 loc.cit.\ that the upper left horizontal map in \eqref{equ:pullback-diagram} is also cofinal and thus an equivalence after applying $\lvert-\rvert$ by 4.1.1.3 (3) loc.cit.. Combining this with \eqref{equ:mapping-space-pushout}, we conclude that the composition
\[
\begin{tikzcd}[row sep=0.3cm, row sep=0.1cm]\Map_{\cD_0}(d_0,\kappa_0\partial_1(d_1))\rar&\Map_{\cD_0\cup_{\cD_{01}}\cD_1}(d_0,\kappa_0\partial_1(d_1))\arrow[d,phantom,"\simeq"] \\
&  \Map_{\cD_0\cup_{\cD_{01}}\cD_1}(d_0,\kappa_1\partial_1(d_1))\rar&\Map_{\cD_0\cup_{\cD_{01}}\cD_1}(d_0,d_1)
\end{tikzcd}
\]
induced by the inclusion $\cD_0\ra \cD_0\cup_{\cD_{01}}\cD_1$ and postcompostion with the counit $\kappa_1\partial_1(d_1)\ra d_1$, is an equivalence. This implies the claim in \ref{enum:pushout-ff}, since postcomposition of this composition with the map induced by $\iota$ is equivalent to the identity on ${\Map_{\cD_0}(d_0,\kappa_0\partial_1(d_1))}$ as a result of \ref{enum:obs-ii}.

We now turn to proving \ref{enum:right-Kan}. By the limit-formula for right Kan extensions \cite[4.3.2.13]{LurieHTT}, the unit map in the claim is naturally equivalent to the map $X(d_0,d_1)\ra \lim_{(\iota(d)\ra (d_0,d_1))}X(\iota(d))$ where the limit is over $\smash{(\cD_0\cup_{\cD_{01}}\cD_1)_{/(d_0,d_1)}\coloneq (\cD_0\cup_{\cD_{01}}\cD_1)\times_{(\cD_0\times_{\cD_{01}}\cD_1)}(\cD_0\times_{\cD_{01}}\cD_1)_{/(d_0,d_1)}}$. The latter category is the value of at $\iota\colon \cD_0\cup_{\cD_{01}}\cD_1\ra \cD_0\times_{\cD_{01}}\cD_1$ of the functor 
\begin{equation}\label{equ:pullback-functor}(-)_{/(d_0,d_1)}\colon (\CatInf)_{/\cD_0\times_{\cD_{01}}\cD_1}\ra (\CatInf)_{/((\cD_0\times_{\cD_{01}}\cD_1)_{/(d_0,d_1)})}\end{equation}
which is given by taking pullback along the forgetful functor $\pi\colon (\cD_0\times_{\cD_{01}}\cD_1)_{/(d_0,d_1)}\ra\cD_0\times_{\cD_{01}}\cD_1$. Since the latter forgetful functor is a cartesian fibration (it classifies the functor $\smash{\Map_{\cD_0\times_{\cD_{01}}\cD_1}(-,(d_0,d_1))}$), the pullback functor \eqref{equ:pullback-functor} is a left adjoint by \cite[Definition 1.1.6 and Lemma 3.2.1]{AyalaFrancisFibrations}, so it preserves colimits. As colimits in overcategories are computed in the underlying categories, this implies that the commutative square 
\begin{equation}\label{equ:overcategory-pullback}
\begin{tikzcd}[row sep=0.3cm]
(\cD_{01})_{/(d_0,d_1)}\dar\rar&(\cD_0)_{/(d_0,d_1)}\dar\\
(\cD_1)_{/(d_0,d_1)}\rar&(\cD_0\cup_{\cD_{01}}\cD_1)_{/(d_0,d_1)}
\end{tikzcd}
\end{equation}
of fully faithful inclusions is again a pushout. Therefore the limit $\lim_{(\iota(d)\ra (d_0,d_1))}X(\iota(d))$ is the pullback of the limits of the restriction of the diagram to the full subcategories in \eqref{equ:overcategory-pullback}. The latter all have terminal objects induced by the units of the $(\kappa_i\dashv\partial_i)$-adjunction: $(\epsilon_0,\id)\colon (\kappa_0\partial_0(d_0),d_1)\ra (d_0,d_1)$ is terminal in $(\cD_1)_{/(d_0,d_1)}$, $(\id,\epsilon_1)\colon (d_0,d_1)\ra (d_0,\kappa_1\partial_1(d_1)$ is terminal in $(\cD_0)_{/(d_0,d_1)}$, and $(\epsilon_0,\epsilon_1)\colon (d_0,d_1)\ra (\kappa_0\partial_0(d_0),\kappa_1\partial_1(d_1))$ is terminal in $(\cD_0)_{/(d_0,d_1)}$. Using that limits over categories with terminal objects are given by the value at that terminal object, the claim \ref{enum:right-Kan} follows.

We prove \ref{enum:BC} more generally in stated: assume we have \emph{maps of adjunctions} (maps of bicartesian fibrations over $\Delta^1$ \cite[4.7.4]{LurieHA}, e.g.\, obtained by restricting an adjunction to full subcategories as in \ref{enum:right-Kan})
\begin{equation}\label{equ:maps-of-adjunctions}
\begin{tikzcd}[row sep=0.5cm]\cD'_0  \arrow[swap,d,"\phi_0"] \rar[shift left=-.5ex,swap]{\partial'_0} & \cD'_{01}  \lar[shift left=-.5ex,swap]{\kappa'_0} \arrow[d,"\phi_{01}"]   \\
\cD_0 \rar[shift left=-.5ex,swap]{\partial_0} & \cD_{01} \lar[shift left=-.5ex,swap]{\kappa_0} \end{tikzcd}
\quad \text{and}\quad
\begin{tikzcd}[row sep=0.5cm]  \cD'_{01} \rar[shift left=.5ex]{\kappa'_1} \arrow[swap,d,"\phi_{01}"] \phi_0 & \cD'_1 \dar{\phi_1}  \lar[shift left=.5ex]{\partial'_1}  \\s
 \cD_{01} \rar[shift left=.5ex]{\kappa_1} & \cD_1  \lar[shift left=.5ex]{\partial_1}  \end{tikzcd} \end{equation}
where the $\kappa_i$ and $\kappa'_i$ are fully faithful. In addition to commutativity of the four squares obtained from \eqref{equ:maps-of-adjunctions} by forgetting the $\kappa_i$ and $\kappa_i'$ or the $\partial_i$ and $\partial_i'$, maps of adjunctions give compatibilities between the unit transformations, which yields a map of commutative diagrams 
\[\text{from}\quad \begin{tikzcd}[row sep=0.5cm]\cD'_0 \dar{\id} & \cD'_{01} \rar[shift left=.5ex]{\kappa'_1} \lar[shift left=-.5ex,swap]{\kappa'_0} \dar{\id} & \cD'_1 \dar{\id} \\
\cD'_0 \rar[shift left=-.5ex,swap]{\partial'_0} & \cD'_{01} & \cD'_1  \lar[shift left=.5ex]{\partial'_1} \end{tikzcd} \quad \text{to} \quad \begin{tikzcd}[row sep=0.5cm] \cD_0 \dar{\id} & \cD_{01} \rar[shift left=.5ex]{\kappa_1} \lar[shift left=-.5ex,swap]{\kappa_0} \dar{\id} & \cD_1 \dar{\id} \\
\cD_0 \rar[shift left=-.5ex,swap]{\partial_0} & \cD'_{01} & \cD_1  \lar[shift left=.5ex]{\partial_1}.\end{tikzcd}\]
This yields an equivalence $\iota \phi_\sqcup \simeq \phi_\times \iota'$ where $\phi_\sqcup$ and $\phi_\times$ are the induced functors between the pushouts of the top rows or the pullbacks of the bottom rows, respectively. The more general version of \ref{enum:BC} we show is that the diagram
\[
\begin{tikzcd}[row sep=0.5cm]
	\PSh(\cD_0\cup_{\cD_{01}}\cD_1)\dar[swap]{\iota_*}\rar{\phi_\sqcup^*}&\PSh(\cD'_0\cup_{\cD'_{01}}\cD'_1)\dar{\iota'_*}\\
	\PSh(\cD_0\times_{\cD_{01}}\cD_1)\rar{\phi_\times^*}&\PSh(\cD'_0\times_{\cD'_{01}}\cD'_1),
\end{tikzcd}
\]
commutes, that is, the Beck--Chevalley transformation $\phi_\times^*\iota_*\ra \iota'_*\phi_\sqcup^*$ is an equivalence. It suffices to prove this after precomposition with $\iota^*$ since the latter is essentially surjective as a result of \ref{enum:pushout-ff}. Then the transformation has the form $\phi_\times^*\iota_*\iota^*\ra \iota'_*\phi_\sqcup^*\iota^*\simeq \iota'_*(\iota')^*\phi_\times $ and the claim that it is an equivalence follows from the description of $\iota'_*(\iota')^*$ and $\iota_*\iota^*$ from \ref{enum:right-Kan}.

Finally, \ref{enum:restricted-yoneda} follows by combining \ref{enum:right-Kan}, observation \ref{enum:obs-i}, and the fact that mapping spaces in pullbacks of categories are the pullbacks of the mapping spaces.
\end{proof}

\subsection{Proof of \cref{thm:gluing}}
Equipped with \cref{lem:pushout-lem}, we now move towards the proof of \cref{thm:gluing}. Setting $\smash{\DiscInf^\midman_d\coloneqq \DiscInf^\partial_d\times_{\DiscInf_{d-1}}\DiscInf^\partial_d}$, the gluing functor $\smash{\gamma\colon \ManInf_d^\midman\ra \ManInf_{d}}$ from \cref{sec:glue-disc-ps} restricts to $\smash{\gamma\colon \DiscInf^\midman_d\ra \DiscInf_d}$, so writing $\smash{\DiscInf^\midman_{d,\le\bullet}\coloneqq\gamma^{-1}(\DiscInf_{d,\le\bullet})}$, we get a map $\smash{\gamma\colon \DiscInf^\midman_{d,\le\bullet}\ra \DiscInf_{d,\le\bullet}}$ of towers.  There is also a commutative square of towers
\[
\begin{tikzcd}[column sep=1.5cm,row sep=0.5cm]
\DiscInf_{d-1,\le \bullet}\rar{\kappa}\arrow[d,"\kappa",swap]&\DiscInf_{d,\le\bullet}^\partial\dar{(\id,\kappa\partial(-))}\\
\DiscInf_{d,\le\bullet}^\partial\arrow[r,"{(\kappa\partial(-),\id)}",swap]&\DiscInf^\midman_{d,\le\bullet}
\end{tikzcd}
\]
which induces a map of towers $j\colon \DiscInf^{\ell r}_{d,\le\bullet}\ra \DiscInf^\midman_{d,\le\bullet}$ out of the tower of levelwise pushouts of categories $\smash{\DiscInf^{\ell r}_{d,\le\bullet}\coloneqq \DiscInf_{d,\le\bullet}^\partial\cup_{\DiscInf_{d-1,\le \bullet}}\DiscInf_{d,\le\bullet}^\partial}$. As further preparation for the proof of \cref{thm:gluing}, we will show that left Kan extension along $\gamma$ and right Kan extension along $j$ interact well with restriction along the inclusions between the $\DiscInf$-subcategories:

\begin{lem}\label{lem:gamma-j-towers}The following diagrams commute for $1\le k\le l\le\infty$
\begin{equation}
\begin{tikzcd}\PSh(\DiscInf_{d,\le l}^{\midman})\arrow[d,swap,"\gamma_!"]\rar{\inc^*}&\PSh(\DiscInf_{d,\le k}^{\midman})\dar{\gamma_!}\\
\PSh(\DiscInf_{d,\le l})\rar{\inc^*}&\PSh(\DiscInf_{d,\le k})\end{tikzcd}
\quad
\begin{tikzcd}
\PSh(\DiscInf^{\ell r}_{d,\le l})\arrow[d,swap,"j_*"]\rar{\inc^*}&\PSh(\DiscInf^{\ell r}_{d,\le k})\dar{j_*}\\
\PSh(\DiscInf_{d,\le l}^{\midman})\rar{\inc^*}&\PSh(\DiscInf_{d,\le k}^{\midman})
\end{tikzcd}  
\end{equation}
i.e.\,the Beck--Chevalley transformations $\gamma_!\inc^*\ra\inc^*\gamma_!$ and $\inc^*j_*\ra j_*\inc^*$ are equivalences.
\end{lem}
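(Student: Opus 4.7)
The two commutativities call for different arguments, handled separately.

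For the square involving $j_*$, I would reduce to \cref{lem:pushout-lem}~(iii). The observation is that any object $(M_0, M_1, \varphi) \in \DiscInf^\midman_{d,\le k}$ automatically has each $M_i \in \DiscInf^\partial_{d,\le k}$, because the number of components of $M_i$ is bounded by that of the glued manifold $\gamma(M_0, M_1, \varphi) \in \DiscInf_{d,\le k}$. This provides a fully faithful inclusion $\sigma_k \colon \DiscInf^\midman_{d,\le k} \hookrightarrow \DiscInf^\partial_{d,\le k} \times_{\DiscInf_{d-1,\le k}} \DiscInf^\partial_{d,\le k}$ through which the pushout-to-pullback functor $\iota_k$ from \cref{lem:pushout-lem} factors as $\iota_k = \sigma_k \circ j_k$. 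Since $\sigma_k$ is fully faithful, $(\sigma_k)^*(\sigma_k)_* \simeq \id$ and so $(j_k)_* \simeq (\sigma_k)^* (\iota_k)_*$. As the adjunctions $\kappa \dashv \partial$ restrict from level $l$ to level $k$, \cref{lem:pushout-lem}~(iii) gives a Beck--Chevalley equivalence for $(\iota_\bullet)_*$, which combined with the observation that restriction along $\DiscInf^\midman_{d,\le k} \hookrightarrow \DiscInf^\partial_{d,\le l}\times_{\DiscInf_{d-1,\le l}}\DiscInf^\partial_{d,\le l}$ decomposes in two equivalent ways (through either $\sigma_l$ or $\sigma_k$) yields the desired equivalence for $j_*$.

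For the square involving $\gamma_!$, I would verify the Beck--Chevalley transformation on representables. As both $(\gamma_k)_! \circ \subset^*$ and $\subset^* \circ (\gamma_l)_!$ are colimit-preserving and any presheaf is a colimit of representables, this is enough. For $e_0 \in \DiscInf^\midman_{d,\le l}$ and $c \in \DiscInf_{d,\le k}$, Yoneda identifies $(\subset^* (\gamma_l)_! y(e_0))(c)$ with $\Emb(c, \gamma(e_0))$, while the pointwise formula exhibits $((\gamma_k)_! \subset^* y(e_0))(c)$ as the coend
\[
\int^{e'' \in \DiscInf^\midman_{d,\le k}} \Map_{\DiscInf^\midman_d}(e'', e_0) \times \Emb(c, \gamma(e'')),
\]
and the comparison sends $(e'', \phi, \psi)$ to $\gamma(\psi) \circ \phi$.

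The main obstacle is showing this coend is $\Emb(c, \gamma(e_0))$. I would argue this by cofinality: for each embedding $\eta \colon c \hookrightarrow \gamma(e_0)$, there is a canonical restriction $e_0^\eta \in \DiscInf^\midman_{d,\le k}$ obtained from $e_0 = (M_0, M_1, \varphi)$ by keeping only those components of $M_0$ and $M_1$ contributing to the components of $\gamma(e_0)$ met by the image of $\eta$, so that $\gamma(e_0^\eta)$ has at most $|c| \le k$ components, together with a canonical factorisation $c \hookrightarrow \gamma(e_0^\eta) \hookrightarrow \gamma(e_0)$ and a sub-object inclusion $e_0^\eta \hookrightarrow e_0$. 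This data should exhibit the inclusion of the relevant comma category $(c \downarrow \gamma_k)$ into $(c \downarrow \gamma_l)$ as cofinal for the specific diagram computing the coend, with the residual contractibility coming from standard contractibility statements for spaces of embedded disjoint open discs in discs, thereby identifying the coend with $\Emb(c, \gamma(e_0))$.
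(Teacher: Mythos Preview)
Your treatment of the $j_*$-square is essentially the paper's own argument: factor $j_k$ through the fully faithful inclusion $\sigma_k\colon\DiscInf^\midman_{d,\le k}\hookrightarrow\DiscInf^\partial_{d,\le k}\times_{\DiscInf_{d-1,\le k}}\DiscInf^\partial_{d,\le k}$, use full faithfulness of $\sigma_k$ to reduce $(j_k)_*$ to $(\sigma_k)^*(\iota_k)_*$, and then invoke \cref{lem:pushout-lem}~\ref{enum:BC}. That part is fine.

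For the $\gamma_!$-square the approaches diverge. The paper does not argue directly: it observes that $\gamma\colon \DiscInf^\midman_d\to\DiscInf_d$ is (equivalent to) a functor between underlying categories of symmetric monoidal envelopes of unital operads, induced by a map of operads, and defers to the proof of \cite[Lemma~4.4]{KKoperadic}, which handles this Beck--Chevalley square in that generality. Your approach is more hands-on, and the key geometric observation---that an embedding $\eta\colon c\hookrightarrow\gamma(e_0)$ factors through a sub-object $e_0^\eta\in\DiscInf^\midman_{d,\le k}$ obtained by discarding components of $\gamma(e_0)$ not met by $\eta$---is correct and is ultimately what drives the operadic argument too.

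However, your final paragraph is not a proof. The phrase ``should exhibit the inclusion \ldots\ as cofinal for the specific diagram computing the coend'' names exactly the statement to be proven rather than proving it; cofinality is a property of a functor (checked e.g.\ via Quillen's Theorem~A by showing contractibility of certain slices), not something relative to a chosen diagram, and you have not carried out any such verification. The appeal to unspecified ``standard contractibility statements for spaces of embedded disjoint open discs in discs'' does not discharge the obligation either: it is not clear which slices you mean, nor why their contractibility reduces to such a statement. A direct completion would require either an honest cofinality check, or a decomposition of both sides over functions $\pi_0(c)\to\pi_0(\gamma(e_0))$ together with a careful argument that the pieces assemble correctly in the coend. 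As written, the $\gamma_!$-half is a plausible sketch with the crucial step left open.
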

\begin{proof}
The claim regarding the first square follows from the same argument as for \cite[Lemma 4.4]{KKoperadic} by replacing the role of $\Env(\cO)$ and $\Env(\cP)$ in loc.cit.\,by the categories $\DiscInf_{d}^{\midman}$ and $\DiscInf_{d}$ respectively and using that any embedding between manifolds factors uniquely as the composition of an embedding which is surjective on path-components with an inclusion of path-components. Regarding the second square, we first recall that $\smash{\DiscInf^{\midman}_{d,\le n}}$ was defined as a  full subcategory of $\smash{\DiscInf^{\midman}_{d}=\DiscInf^\partial_d\times_{\DiscInf_{d-1}}\DiscInf^\partial_d}$, namely the preimage of $\DiscInf_{d,\le n}$ under $\gamma$. This is contained in the full subcategory $\smash{\DiscInf^\partial_{d,\le n}\times_{\DiscInf_{d-1,\le n}}\DiscInf^\partial_{d,\le n}\subset \DiscInf^{\midman}_{d}}$, but is smaller. However, as right Kan extension along a fully faithful functor is fully faithful, it suffices to prove commutativity of the second square when replacing $\DiscInf^{\midman}_{d,\le n}$ with $\smash{\DiscInf^\partial_{d,\le n}\times_{\DiscInf_{d-1,\le n}}\DiscInf^\partial_{d,\le n}}$ for $n=k$ and $l$ respectively, and then the claim becomes an instance of \cref{lem:pushout-lem} \ref{enum:BC}.
\end{proof}
As a result of \cref{lem:gamma-j-towers}, we have maps of towers \begin{equation}\label{equ:j-gamma-towers} j_*\colon \PSh(\DiscInf_{d,\le \bullet}^{\ell r})\lra \PSh(\DiscInf^\midman_{d,\le \bullet})\quad\text{and}\quad\gamma_!\colon \PSh(\DiscInf_{d,\le \bullet}^{\midman})\ra \PSh(\DiscInf_{d,\le \bullet}).\end{equation} Now consider the following (potentially non-commutative) diagram of towers of categories
\begin{equation}\label{equ:final-tower-diagram}\begin{tikzcd}[row sep=0.4cm]
\ManInf_d^\midman\rar{y}\arrow[d,"\gamma",swap]& \PSh(\ManInf_d^\midman)\dar{\gamma_!}\rar{(\iota^\midman)^*}&\PSh(\DiscInf^\midman_{d,\le\bullet})\dar{\gamma_!}\rar{j^*}&\PSh(\DiscInf^{\ell r}_{d,\le\bullet})\arrow[dl,"\gamma_!j_*", bend left=10]\\
\ManInf_d\rar{y}&\PSh(\ManInf_d)\rar{\iota^*}&\PSh(\DiscInf_{d,\le\bullet})&&.
\end{tikzcd}
\end{equation}
The bottom row agrees by definition with the bottom map in the claim of \cref{thm:gluing}. Moreover, recalling the definition of the top map $E^{\midman}$ in \cref{thm:gluing} and using that $\PSh(-)\colon \Cat^\op\ra\Cat$ sends pushouts to pullbacks, one sees that the top row in \eqref{equ:final-tower-diagram} agrees with $E^{\midman}$, so setting $\gamma^{\DiscInf}\coloneqq \gamma_!j_*$ in order to prove \cref{thm:gluing} it suffices to show that the outer two compositions $\ManInf_d^\midman\ra \PSh(\DiscInf_{d,\le\bullet})$ are equivalent as maps of towers. The two ingredients for this are:
\begin{lem}\label{lem:unit-on-yoneda}The unit $\id\ra j_*j^*$ is an equivalence on the essential image of $(\iota^\midman)^* \circ y$.
\end{lem}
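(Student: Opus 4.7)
The plan is to deduce this from \cref{lem:pushout-lem} \ref{enum:restricted-yoneda} applied to the adjunctions $\kappa \colon \ManInf_{d-1} \rightleftarrows \ManInf_d^\partial \colon \partial$ together with their restriction to $\kappa \colon \DiscInf_{d-1,\le n} \rightleftarrows \DiscInf^\partial_{d,\le n} \colon \partial$. The obstacle is that $\DiscInf^\midman_{d,\le n}$ was defined as the \emph{proper} full subcategory $\gamma^{-1}(\DiscInf_{d,\le n})$ of the pullback $\widetilde{\DiscInf^\midman_{d,\le n}} \coloneqq \DiscInf^\partial_{d,\le n} \times_{\DiscInf_{d-1,\le n}} \DiscInf^\partial_{d,\le n}$, so the lemma does not apply directly to $j$; the trick will be to first prove the statement with $j$ replaced by the map $\tilde{j} \colon \DiscInf^{\ell r}_{d,\le n} \to \widetilde{\DiscInf^\midman_{d,\le n}}$ from the pushout to the full pullback and then transfer the result along the fully faithful inclusion $i \colon \DiscInf^\midman_{d,\le n} \hookrightarrow \widetilde{\DiscInf^\midman_{d,\le n}}$, exactly as in the proof of the second square in \cref{lem:gamma-j-towers}.

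Concretely, first I would verify that the canonical map from the pushout to the full pullback factors as $\tilde{j} = i \circ j$. This reduces to checking that $\tilde{j}$ lands inside $\gamma^{-1}(\DiscInf_{d,\le n})$, which follows from the observation that for any $M \in \DiscInf^\partial_{d,\le n}$, diffeomorphic to $S \times \bfR^d \sqcup T \times \bfH^d$ with $|S|+|T| \le n$, the gluing $M \cup_{\partial M} \kappa(\partial M)$ is diffeomorphic to $S \times \bfR^d \sqcup T \times \bfR^d$, which lies in $\DiscInf_{d,\le n}$; similarly for objects coming from the other copy of $\DiscInf^\partial_{d,\le n}$ in the pushout.

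Next, applying \cref{lem:pushout-lem} \ref{enum:restricted-yoneda} with $\cD_0 = \cD_1 = \ManInf_d^\partial$, $\cD_{01} = \ManInf_{d-1}$, $\cD'_0 = \cD'_1 = \DiscInf^\partial_{d,\le n}$, and $\cD'_{01} = \DiscInf_{d-1,\le n}$, I get that for any $(M_0,M_1,\varphi) \in \ManInf_d^\midman$, the presheaf $\tilde{X} \in \PSh(\widetilde{\DiscInf^\midman_{d,\le n}})$ obtained by restricting $y(M_0,M_1,\varphi)$ has the property that the unit $\tilde{X} \to \tilde{j}_* \tilde{j}^* \tilde{X}$ is an equivalence.

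Finally, using the composition of Kan extensions to write $\tilde{j}_* \tilde{j}^* \simeq i_* j_* j^* i^*$, and using that $i^* i_* \simeq \id$ since $i$ is fully faithful (so $i_*$ is fully faithful as well), applying $i^*$ to the unit equivalence above yields an equivalence
\[ i^* \tilde{X} \;\simeq\; i^* i_* j_* j^* i^* \tilde{X} \;\simeq\; j_* j^* (i^* \tilde{X}). \]
Since $X \coloneqq (\iota^\midman)^* y(M_0,M_1,\varphi) = i^*\tilde{X}$, and naturality of units identifies this equivalence with the unit $X \to j_* j^* X$, the claim follows. Naturality in $n$ will give the statement at the level of towers. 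The main subtlety to get right is verifying carefully that the identification $\tilde{j}_*\tilde{j}^* \simeq i_* j_* j^* i^*$ intertwines the respective units; this is routine using the standard compatibility between units for a composite adjunction.
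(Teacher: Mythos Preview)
Your proposal is correct and follows essentially the same approach as the paper's proof. The paper's argument is terser: it simply says that ``by an application of the triangle identities'' one may replace $\DiscInf^{\midman}_{d,\le\bullet}$ by the larger tower $\DiscInf^\partial_{d,\le\bullet}\times_{\DiscInf_{d-1,\le\bullet}}\DiscInf^\partial_{d,\le\bullet}$, after which the claim is an instance of \cref{lem:pushout-lem}~\ref{enum:restricted-yoneda}; your final paragraph (on intertwining the units via the factorisation $\tilde{j}=i\circ j$ and the counit equivalence $i^*i_*\simeq\id$) is exactly this triangle-identity reduction spelled out in detail.
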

\begin{proof}
By an application of the triangle identities for adjunctions, one sees that it suffices to show the claim after replacing the tower $\DiscInf^{\midman}_{d,\le \bullet}$ by the tower $\smash{\DiscInf^\partial_{d,\le \bullet}\times_{\DiscInf_{d-1,\le \bullet}}\DiscInf^\partial_{d,\le \bullet}}$ in which case it is an instance of \cref{lem:pushout-lem} \ref{enum:restricted-yoneda}.
\end{proof}
\begin{lem}\label{lem:descent-gluing}The second square in \eqref{equ:final-tower-diagram} commutes, i.e.\,the Beck--Chevalley transformation $\gamma_!(\iota^\midman)^*\ra \iota^*\gamma_!$ is an equivalence.
\end{lem}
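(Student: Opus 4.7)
The plan is to verify the Beck--Chevalley equivalence pointwise, by first reducing to representable presheaves and then running a cofinality / fiberwise contractibility argument.

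Both composite functors $\iota^* \gamma_!$ and $\gamma_! (\iota^\midman)^*$ out of $\PSh(\ManInf^\midman_d)$ preserve colimits in the presheaf variable: $\gamma_!$ is a left adjoint by definition, and each of $\iota^*$, $(\iota^\midman)^*$ sits between a left and a right Kan-extension adjoint, hence preserves both limits and colimits. Since presheaves on $\ManInf^\midman_d$ are generated under colimits by the representables $y_M$ for $M \in \ManInf^\midman_d$, it suffices to verify the transformation is an equivalence on each such $y_M$. Using $\gamma_! y_M \simeq y_{\gamma M}$ (left Kan extension sends Yoneda to Yoneda) and $(\iota^\midman)^* y_M \simeq E^\midman_M$, the claim reduces to showing the canonical map $\gamma_!(E^\midman_M) \to E_{\gamma M}$ in $\PSh(\DiscInf_{d, \le k})$ is an equivalence for every $M \in \ManInf^\midman_d$.

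By the pointwise formula for left Kan extension, evaluated at $N \in \DiscInf_{d, \le k}$ this map is
\[\colim_{(D,\, g\colon N \hookrightarrow \gamma D)} \Map_{\ManInf^\midman_d}(D, M) \longrightarrow \Emb(N, \gamma M), \quad (D, g, \alpha) \longmapsto \gamma(\alpha) \circ g,\]
where the colimit ranges over pairs $D \in \DiscInf^\midman_{d, \le k}$ with an embedding $g\colon N \hookrightarrow \gamma D$. I would show this is an equivalence by fixing an embedding $e \in \Emb(N, \gamma M)$ and proving its homotopy fiber---the space of triples $(D, g, \alpha)$ with $\gamma(\alpha) \circ g = e$---is contractible. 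Geometrically, given $e$, choose a bicollar of the gluing locus $P = \partial M_0 \subset \gamma M$ (a contractible choice). Each Euclidean component of $N$ either maps into one side of $P$ or meets the bicollar; thickening to a Euclidean disc in the first case, or to a pair of halfspaces glued along their boundary in the second, produces a canonical $D \in \DiscInf^\midman_{d, \le k}$ together with the accompanying embeddings $g$ and $\alpha$ factoring $e$. The total space of such thickenings is contractible by the standard contractibility of spaces of tubular and collar neighborhoods of finite submanifolds.

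The main obstacle is the rigorous handling of this contractibility in families, in particular when components of $e(N)$ meet $P$ in complicated patterns or non-transversely. A clean route is to work within a Boavida de Brito--Weiss disc-cover framework, where the relevant local-to-global data assembles into manifestly contractible spaces. Compatibility with the tower structure in $k$ is then automatic: the canonical thickening respects the cardinality bound, so combined with \cref{lem:gamma-j-towers} the Beck--Chevalley equivalences fit together into an equivalence of towers, as required.
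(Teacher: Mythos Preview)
Your reduction to representables is correct and matches the paper's first step. The gap is in the second half: you sketch a fiberwise contractibility argument, then explicitly flag its ``main obstacle'' (components of $e(N)$ meeting $P$ non-transversely or in complicated patterns) and resolve it only by gesturing at ``a Boavida de Brito--Weiss disc-cover framework.'' That is not a proof; it is naming the tool without using it. The thickening you describe is not canonical when a Euclidean component of $N$ crosses $P$ several times or tangentially, and ``standard contractibility of tubular and collar neighborhoods'' does not apply because you are not choosing a tubular neighborhood of a fixed submanifold but rather a factorisation through some object of $\DiscInf^\midman_{d,\le k}$, which is a moving target.

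The paper sidesteps this entirely. After reducing to representables (and to $\bullet=\infty$ via \cref{lem:gamma-j-towers}), it introduces the poset $\cO^\midman_\infty(M)$ of opens $O\subset M$ with $O\cap M_\ell,\,O\cap M_r\in\DiscInf^\partial_d$, and sets up a commutative square whose four corners are $\gamma_!(\iota^\midman)^*$ and $\iota^*\gamma_!$ applied to either $\colim_{O}\Map(-,O^\midman)$ or $\Map(-,M^\midman)$. Three of the four maps are then shown to be equivalences: one is essentially the identity, and the other two follow from the fact that $\{F_n(O)\}_O$ (respectively its intersection with the split configuration space $F_{r,s,t}(M)$) is a complete cover in the sense of Dugger--Isaksen. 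This is the ``disc-cover'' argument you allude to, but carried out concretely; it never needs to factor a single embedding $e$ through a single $D$, which is exactly what makes the transversality issue disappear.
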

\begin{proof}
Since we have already seen in \eqref{equ:j-gamma-towers} that $\gamma_!$ is a map of towers, it suffices to prove the statement for $\bullet=\infty$. Both sides of the Beck--Chevalley transformation $\delta\colon \gamma_!(\iota^{\midman})^*\ra \iota^*\gamma_!$ commute with colimits, so since presheaf-categories are generated under colimits by representables, it suffices in view of \cref{rem:objects-in-manmid} to show that for each $d$-manifold $M$ with no boundary and codimension $0$ submanifolds $M_\ell,M_r\subset M$ that intersect in their common boundary $P\coloneq \partial M_{\ell}=\partial M_r$, the transformation $\delta$ is an equivalence on ${\Map_{\ManInf_d^\midman}(-,M^\midman)\in \PSh(\ManInf_d^\midman)}$ for $M^\midman\coloneq (M_\ell,M_r,\id)\in \ManInf_d^\midman$. To do so, we consider the poset $\cO^{\midman}_{\infty}(M)$ of open subsets $O\subset M$ such that the manifolds with boundary $O\cap M_\ell$ and $O\cap M_r$ are both contained in $\DiscInf^\partial_d$. Inclusion gives a functor $\cO^{\midman}_{\infty}(M)\ra (\ManInf^{\midman}_d)_{/M^\midman}$ that sends $O\in \cO^{\midman}_{\infty}(M)$ to $O^{\midman}\coloneq (O\cap M_\ell,O\cap M_r,\id_{O\cap P})\subset M^\midman$. Now consider the commutative square in $\PSh(\DiscInf_d)$
\[
\begin{tikzcd}[row sep=0.5cm]
\gamma_!(\iota^{\midman})^*\big(\colim_{O\in\cO_\infty^\midman(M)}\Map_{\ManInf_d^\midman}(-,O^\midman)\big)\arrow[r,"\circled{1}","\delta"']\dar{\circled{2}}&\iota^*\gamma_!\big(\colim_{O\in\cO_\infty^\midman(M)}\Map_{\ManInf_d^\midman}(-,O^\midman)\big)\dar{\circled{3}}\\
\gamma_!(\iota^{\midman})^*\big(\Map_{\ManInf_d^\midman}(-,M^\midman))\arrow[r,"\circled{4}","\delta"']&\iota^*\gamma_!\big(\Map_{\ManInf_d^\midman}(-,M^\midman)).
\end{tikzcd}
\]
We will show that $\circled{1}$-$\circled{3}$ are equivalences, which implies that $\circled{4}$ is one as well, so the claim will follow. To do this, we repeatedly use the facts that restriction and left Kan extension preserve colimits and that left Kan extension preserves representables. From these facts, together with the observation that $\smash{O^\midman\in\DiscInf_d^\midman}$, one sees that $\circled{1}$ is equivalent to the identity on $\smash{\colim_{O\in\cO_\infty^\midman(M)}\Map_{\DiscInf_d}(-,O)}$, so in particular an equivalence. Using the above facts again, the map $\circled{3}$ is equivalent to the map  $\smash{\colim_{O\in\cO^\midman_\infty(M)}\Map_{\ManInf_d}(\iota(-),O)\ra \Map_{\ManInf_d}(\iota(-),M)}$. Since $\smash{\{O\}_{O\in\cO_{\infty}^{\midman}(M)}}$ is a complete Weiss $\infty$-cover of $M$ in the sense of \cite[Definition 6.3]{KnudsenKupers}, the proof of Lemma 6.4 loc.cit.\ shows that this map is indeed an equivalence. Since it will be relevant later, recall that the key step in the proof of this result in loc.cit.\ is an application of \cite[Proposition 4.6 (c)]{DuggerIsaksen} to the open cover $\{F_n(O)\}_{O\in\cO_{\infty}^{\midman}(M)}$ of the space $F_n(M)$ of ordered configurations of $n$ points in $M$, for $n\ge0$, using that this is a complete cover in the sense of Definition 4.5 loc.cit.. We will now show by a similar argument that the map $\smash{\colim_{O\in\cO^\midman_\infty(M)}\Map_{\ManInf^\midman_d}(\iota^\midman(-),O^\midman)\ra \Map_{\ManInf^\midman_d}(\iota^\midman(-),M^\midman)}$ is an equivalence, which will imply that $\gamma_!(-)$ of it---which is precisely \circled{2}---is an equivalence as well, so the claim will follow. Adapting the argument in the proof of \cite[Lemma 6.4]{KnudsenKupers} to this map reduces the claim to showing that for $r,s,t\ge0$ the open cover $\smash{\{F_{r,s,t}(O)\}_{O\in\cO_{\infty}^{\midman}(M)}}$ of the space $F_{r,s,t}(M)$ of ordered configurations of $r+s+t$ points in $M$ where the first $r$ points lie in $\interior(M_\ell)$, the second $s$ points in $P$, and the final $t$ points in $\interior(M_r)$, is a complete cover. But we have $F_{r,s,t}(O)=F_{r+s+t}(O)\cap F_{r,s,t}(M)$, so the claim follows from the corresponding fact for $\{F_{r+s+t}(O)\}_{O\in\cO_{\infty}^{\midman}(M)}$ we have already used above by observing that complete covers are preserved by taking intersections with a fixed subspace.
\end{proof}

We end the section by finishing the proof of \cref{thm:gluing} and deducing \cref{thm:gluing-isotopy}.
\begin{proof}[Proof of \cref{thm:gluing}]
By the discussion above the diagram \eqref{equ:final-tower-diagram}, it suffices to show that the outermost two compositions in it agree. As a result of \cref{lem:unit-on-yoneda}, it suffices to show that the two leftmost squares in the diagram commute. For the second square, this is \cref{lem:descent-gluing} and for the first square it is an instance of the naturality of the Yoneda embedding.
\end{proof}

\begin{proof}[Proof of \cref{thm:gluing-isotopy}]For manifolds $M_i$ and diffeomorphism $\varphi_i$ as in the statement, we have that $E_{M_0\cup_{\varphi_i}M_1}$ in $\PSh(\DiscInf_{d,\le k})$ for $i=0,1$ are the values of $(M_0,M_1,\varphi_i)\in\ManInf_d^{\midman}$ under the counterclockwise composition in the square of \cref{thm:gluing} for $\bullet=k$, so by commutativity they are also the values under the clockwise composition. But the assumption implies that their images $(E^\partial_{M_0},E^\partial_{M_0},E_{\varphi_i})$ under the top horizontal map $E^\midman$ agree, so the claim follows. The addendum follows in the same way, using a variant of \cref{thm:gluing} for oriented manifolds which is proved in the same way as the non-oriented version (This is also a special case of the version for general tangential structures as alluded to in \cref{rem:extensions-gluing} \ref{enum:extensions-gluing-ii}).
\end{proof}

\section{Stable collapse maps of $\DiscInf$-presheaves}\label{sec:normal-invariants}
In this section we establish another one of the three ingredients that we assumed in \cref{sec:main-ideas}, namely \cref{thm:collapse-zigzag} regarding the naturality of  Atiyah duality in equivalences of $2$-truncated presheaves. We adopt the notation from \cref{sec:coker-j-spheres}.

\begin{nrem}[Idea of proof]Informally speaking, \cref{thm:collapse-zigzag} says that the stable collapse map $\bfS\ra \Th(-TM)$ of a closed $d$-manifold $M$ can be extracted from the associated $2$-truncated $\DiscInf$-presheaf $E_M$, so the idea of proof is to find a suitable construction of the stable collapse map for manifolds which can be imitated for  $2$-truncated $\DiscInf$-presheaves. One way to construct the stable collapse map $\bfS\ra \Th(-TM)$ for a closed $d$-manifold $M$ is to choose an embedding $M\subset\bfR^N$ for $N\gg0$ and use the Pontryagin--Thom construction, but this way appears to not easily be replicated for $\DiscInf$-presheaves. There is, however, another way to construct the stable collapse map which does not require the choice of an embedding in $\bfR^N$, namely by using a Poincaré embedding structure on the diagonal $\Delta\colon M\hookrightarrow M\times M$ (see \cref{ex:con-manifold} below). The proof of \cref{thm:collapse-zigzag} is basically to show that this other way \emph{can} be mimicked for $\DiscInf$-presheaves: we first explain that this construction of the stable collapse map is a special case of a general construction using parametrised spectra whose input is a commutative square in spaces with left vertical map a spherical fibration $\pi\colon S(\xi)\ra B$ associated to a vector bundle $\xi$ and with bottom horizontal map is the diagonal of $B$, and whose output gives a zig-zag of spectra between $\bfS$ and $\Th(-\xi)$ (see Constructions \ref{con:pre-collapse} and \ref{con:square-spherical-fibration}). We will then see, firstly, that this recovers the stable collapse map $\bfS\ra\Th(-TM)$ if one inputs the square underlying the Poincaré embedding structure of the diagonal of a closed $d$-manifold $M$ (see \cref{ex:con-manifold}), secondly, that there is a square associated to any presheaf $X\in\PSh(\DiscInf_{d,\le 2})$ (see  \cref{ex:square-presheaf}) to which one can apply the construction, and, thirdly, that these squares agree in the case $X=E_M$ (see \cref{lem:zig-zag-agrees}). This will then show that the zig-zag of spectra obtained in this manner from the square associated to $X\in\PSh(\DiscInf_{d,\le 2})$ has the properties claimed in \cref{thm:collapse-zigzag}.
\end{nrem}

\subsection{Proof of \cref{thm:collapse-zigzag}} We begin by fixing some notation:
\begin{enumerate}[leftmargin=0.6cm, label=(\alph*)]
	\item For a space $B$, we write $\cS_{/B}$,  $(\cS_{/B})_*$, and $\Sp_B$ for the $\infty$-categories of spaces over $B$, retractive spaces over $B$ (spaces over $B$ equipped with a section), and parametrised spectra over $B$, respectively. By straightening, they are equivalent to the functor categories $\Fun(B,\cS)$, $\Fun(B,\cS_*)$, and $\Fun(B,\Sp)$, respectively. 
	\item We denote various fibrewise constructions in $\cS_{/B}$,  $(\cS_{/B})_*$, and $\Sp_B$ by adding a $B$-subscript. For instance, $\Sigma^\infty_B (-)\colon (\cS_{/B})_*\ra \Sp_B$ denotes taking fibrewise suspension spectrum, $(-)_{+,B}\colon \cS_{/B}\ra (\cS_{/B})_*$ fibrewise adding a disjoint basepoint,  $(-)\otimes_B(-)\colon \Sp_B\times \Sp_B\ra \Sp_B$ fibrewise tensor product,  $\IntMap_B(-,-)\colon \Sp_B^\op\times \Sp_B\ra \Sp_B$ fibrewise mapping spectra, and $D_B(-)=\IntMap_B(-,\bfS_B)\colon \Sp_B^\op\ra \Sp_B$ fibrewise Spanier--Whitehead dual given by taking fibrewise mapping spectra into the constant parametrized spectrum $\bfS_B$ with fibre $\bfS$.
	\item For a map of spaces $f\colon B\ra C$, the restriction functor $f^*\colon \Sp_C\ra \Sp_B$ has a left adjoint $f_!\colon  \Sp_B\ra \Sp_C$, given by left Kan extension under straightening. In particular, for the constant map $f=p\colon B\ra \ast$ the adjoint $p_!\colon \Sp_B\ra \Sp_*=\Sp$ is given by taking colimits. 
	\item\label{enum:fibrewise-cone} For a map $\varphi\colon V\ra W$ in $\cS_{/B}$, we denote the pushout in $\cS$ of $V\ra W$ along $V\ra B$ as $\smash{C_B(\varphi)}$ and think of it as a relative mapping cone. Note that $C_B(\varphi)$ comes with canonical maps to and from $B$, which turn it into a retractive space. This construction is natural in that a commutative square in $\cS_{/B}$
	\begin{equation}\label{equ:square-over-B}
		\begin{tikzcd}[row sep=0.5cm]
			V'\arrow[d,"\varphi",swap]\rar&V\dar{\varphi'}\\
			W'\rar&W
		\end{tikzcd}
	\end{equation}
	induces a map $C_B(\varphi')\ra C_B(\varphi)$ in $(\cS_{/B})_*$. For $V'=\varnothing$ and $W'=W$ this in particular gives a map $W_{+,B}\ra C_B(\varphi)$ in $(\cS_{/B})_*$. An important class of examples for us is when $\varphi$ is the projection $S(\xi)\ra B$ of the $(d-1)$-dimensional spherical fibration of a $d$-dimensional vector bundle $\xi$ over $B$ given by removing the $0$-section. In this case, $C_B(\varphi)\ra B$ agrees with the retractive space given as the $d$-dimensional spherical fibration obtained from $\xi$ by fibrewise one-point compactification, or equivalently obtained from $S(\xi)$ by fibrewise cone. Then $\smash{\Sigma^\infty_BC_B(\varphi)}$ is the usual parametrised spectrum with fibres $\bfS^d$ associated to a $d$-dimensional vector bundle $\xi$, so $\smash{p_!(\Sigma^\infty_BC_B(\varphi))\simeq \Th(\xi)}$ is its Thom-spectrum.
\end{enumerate}

\begin{construction}\label{con:pre-collapse}Given a space $B$, we view $B\times B$ as a space over $B$ via the projection to the first coordinate. Under the straightening equivalence, this corresponds to the constant functor $\mathrm{const}_B\in\Fun(B,\cS)$, so the result $\Sigma^\infty_B((B\times B)_{+,B})\in \Sp_B$ of adding a fibrewise disjoint basepoint and then taking fibrewise suspension spectra corresponds to the constant functor $\mathrm{const}_{\Sigma_+^\infty B}\in\Fun(B,\Sp)$. The latter is also the straightening of $p^*(\Sigma_+^\infty B)\in \Sp_B$ where $p\colon B\ra \ast$ is the constant map, so we have a canonical equivalence $p^*(\Sigma^\infty_+B)\simeq \Sigma^\infty_B ((B\times B)_{+,B})$. Now given $V\in\cS_{/B}$ and a map $\varphi\colon V\ra B\times B$ in $\cS_{/B}$, we may apply $\Sigma^\infty_B(-)$ to the map $(B\times B)_{+,B}\ra C_B(\varphi)$ from \ref{enum:fibrewise-cone} to arrive at a map $p^*(\Sigma^\infty_+B)\simeq \Sigma^\infty_B ((B\times B)_{+,B})\ra \Sigma^\infty_BC_B(\varphi)$. Combining this with the fibrewise evaluation map $D_B(\Sigma^\infty_BC_B(\varphi))\otimes_B \Sigma^\infty_BC_B(\varphi) \ra p^*(\bfS)$, we obtain a map $D_B(\Sigma^\infty_BC_B(\varphi))\otimes_B p^*(\Sigma^\infty_+B)\ra p^*(\bfS)$ which yields using the tensor-hom adjunction  a map $D_B(\Sigma^\infty_BC_B(\varphi))\ra \IntMap_B(p^*(\Sigma^\infty_+B), p^*(\bfS))\simeq p^*\IntMap(\Sigma^\infty_+B, \bfS)=p^*D(\Sigma^\infty_+B)$ which in turn yields via the $(p_!\dashv p^*)$-adjunction a map of spectra of the form
	\begin{equation}\label{equ:canmap}p_!D_B(\Sigma^\infty_BC_B(\varphi))\lra D(\Sigma^\infty_+B).\end{equation}
	
\end{construction}
\begin{construction}\label{con:square-spherical-fibration}Suppose we are given a $d$-dimensional vector bundle $\xi\colon B\ra \BO(d)$ over a space $B$ and a commutative square in $\cS$ of the form
	\begin{equation}\label{equ:square-general}
		\begin{tikzcd}[row sep=0.5cm]			S(\xi)\dar[swap]{\pi} \rar& C\dar{i}\\
			B\rar{\Delta}&B\times B
		\end{tikzcd}
	\end{equation}
	where $S(\xi)$ is the $(d-1)$-dimensional spherical fibration induced by $\xi$, the bottom horizontal map is the diagonal, and $i$ is any map. Viewing this square as a square in $\cS_{/B}$ via the projection of $B\times B$ onto the first coordinate, 
the naturality in \ref{enum:fibrewise-cone} yields a map $C_B(S(\xi))\ra C_B(i)$ and thus by applying $p_!D_B(\Sigma_B^\infty(-))$ a map $p_!D_B(\Sigma_B^\infty C_B(i))\ra p_!D_B(\Sigma_B^\infty C_B(S(\xi)))\simeq \Th(-\xi)$ (see \ref{enum:fibrewise-cone} for the final equivalence). This features in a natural zig-zag of spectra
	\begin{equation}\label{equ:collapse-zig-zag}
		\bfS\simeq D(\Sigma_+^\infty \ast)\lra D(\Sigma_+^\infty B) \lla p_!D_B(\Sigma^\infty_B C_B(i))\lra \Th(-\xi)\end{equation}
	where the first map results from applying $D(\Sigma_+^\infty(-))$ to the map $p\colon B\ra \ast$, the second map is an instance of \eqref{equ:canmap}, and the final map is the one we just described.
\end{construction}
	
\begin{ex}\label{ex:con-manifold} For us, the most important example of a square as in \eqref{equ:square-general} is the square in $\cS$
	\begin{equation}\label{equ:square-manifold}\begin{tikzcd}[row sep=0.5cm] S(TM) \rar \dar[swap]{\pi} & M \times M \backslash \Delta \dar{\subset} \\
		M \rar{\Delta} & M \times M \end{tikzcd}\end{equation}
		where $B=M$ is a $d$-manifold, $\xi=TM$ is its tangent bundle and $i\colon C\ra B\times B$ is the inclusion $M\times M\backslash \Delta \subset M\times M$ of the complement of the diagonal. This square arises as follows: writing $0_M\subset TM$ for the $0$-section, a choice of tubular neighbourhood of the diagonal $M\subset M\times M$ yields an embedding of manifold pairs $(TM,TM\backslash0_M)\hookrightarrow (M\times M,M\times M\backslash\Delta)$ from which one obtains \eqref{equ:square-manifold} by combining the equivalence of pairs of spaces $(D(M),S(TM))\simeq (TM,TM\backslash0_M)$ induced by the inclusion together with the equivalence $D(TM)\simeq M$ given by the projection.
		
		 If $M$ is a \emph{closed} manifold, then the wrong-way map in the instance of the zig-zag \eqref{equ:collapse-zig-zag} associated to \eqref{equ:square-manifold} turns out to be an equivalence (see e.g.\,\cite[Proposition 4.12]{AndoBlumbergGepner}) and the map $\bfS\ra \Th(-TM)$ resulting from choosing an inverse is equivalent to the stable collapse map (see e.g.\,Section 4.3 loc.cit.\ or \cite[Section 4.2]{NaefSafronov} for explanations of this).
\end{ex}

\begin{ex}\label{ex:square-presheaf}Any $k$-truncated presheaf $X\in \PSh(\DiscInf_{d,\le k})$ for $k\ge2$ also yields an example of a square as in \eqref{equ:square-general}: using the notation $\vert X\rvert$ and $\xi_X$ from \cref{sec:coker-j-spheres} and writing $\Emb(-,-)$  and $\Diff(-)$ for the mapping and automorphism spaces in $\DiscInf_{d,\leq k}$, we have a square \begin{equation}
	\begin{tikzcd}[row sep=0.5cm]\label{equ:square}
		S(\xi_X)\simeq \Big(\frac{\Emb(\bfR^d\sqcup \bfR^d,\bfR^d)}{\Diff(\bfR^d)^{\times 2}}\times_{\Diff(\bfR^d)}X(\bfR^d)\Big) \rar\arrow[d,swap,"\pi"]& \frac{X(\bfR^d\sqcup \bfR^d)}{\Diff(\bfR^d)^{\times 2}}\dar\\
		\lvert X\rvert\rar{\Delta}&\lvert X\rvert\times \lvert X\rvert.
	\end{tikzcd}
\end{equation}
 obtained as follows: the map $\Emb(\bfR^d\sqcup \bfR^d,\bfR^d)\ra \Emb(\bfR^d,\bfR^d)^{\times 2}$ given by precomposition with the inclusions, and the functoriality of $X$, yields a map of pairs
\begin{equation}\label{equ:map-pairs-square}
\begin{tikzcd}[row sep=0.2cm]\Big(\big(\frac{\Emb(\bfR^d,\bfR^d)}{\Diff(\bfR^d)}\big)^{\times 2}\times_{\Diff(\bfR^d)}X(\bfR^d),\frac{\Emb(\bfR^d\sqcup \bfR^d,\bfR^d)}{\Diff(\bfR^d)^{\times 2}}\times_{\Diff(\bfR^d)}X(\bfR^d)\Big)\dar\\
\Big(\big(\frac{X(\bfR^d)}{\Diff(\bfR^d)}\big)^{\times 2},\frac{X(\bfR^d\sqcup \bfR^d)}{\Diff(\bfR^d)^{\times 2}}\Big)
\end{tikzcd}
\end{equation}
from which one obtains \eqref{equ:square} by using $\Emb(\bfR^d,\bfR^d)/\Diff(\bfR^d)\simeq \ast$, $\lvert X\rvert=*\times_{\Diff(\bfR^d)}X(\bfR^d)$ together with the $\GL_d(\bfR)$-equivariant equivalence $\smash{\Emb(\bfR^d\sqcup \bfR^d,\bfR^d)/\Diff(\bfR^d)^{\times 2} \simeq \bfR^d\backslash\{0\}}$ to identify the source in \eqref{equ:map-pairs-square} with the pair $(S(\xi_X),\vert X\rvert)$ given by the projection. 
\end{ex}

The two examples Examples \ref{ex:con-manifold} and \ref{ex:square-presheaf} turn out to be closely related: 
\begin{lem}\label{lem:zig-zag-agrees}
For a smooth $d$-manifold $M$, there is an equivalence of squares between \eqref{equ:square-manifold} and \eqref{equ:square} for $X=E_M$. More specifically, there is an equivalence of spherical fibrations $S(TM)\simeq S(\xi_{E_M})$ such that the resulting equivalences between the left vertical and bottom horizontal maps in \eqref{equ:square-manifold} and \eqref{equ:square} can be lifted to an equivalence of squares.
\end{lem}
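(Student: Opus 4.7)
The plan is to construct compatible equivalences at each of the four corners of \eqref{equ:square-manifold} and \eqref{equ:square} (for $X=E_M$) and verify they are compatible with the four edges.

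First, I would invoke \cite[Proposition 5.10]{KKoperadic} to handle the left column and bottom row: this gives an equivalence $\lvert E_M\rvert\simeq M$ together with an equivalence of vector bundles $\xi_{E_M}\simeq TM$, hence an equivalence of sphere bundles $S(\xi_{E_M})\simeq S(TM)$ compatible with the projections. The bottom horizontal maps in both squares are the diagonals of these identified spaces and so agree. For the top-right corner, \eqref{equ:framed-configuration} gives $E_M(\bfR^d \sqcup \bfR^d)=\Emb(\bfR^d \sqcup \bfR^d,M)\simeq F_2^\fr(M)$, and the $\Aut(\bfR^d)^{\times 2}\simeq \oO(d)^{\times 2}$-action on framings is free, so forgetting framings gives $E_M(\bfR^d \sqcup \bfR^d)/\Aut(\bfR^d)^{\times 2}\simeq M\times M\setminus\Delta$. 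Under these identifications, the right vertical map of \eqref{equ:square}, which is induced by the two inclusions $\bfR^d \hookrightarrow \bfR^d \sqcup \bfR^d$ via restriction of embeddings, corresponds precisely to the open inclusion $M\times M\setminus\Delta\hookrightarrow M\times M$ appearing in \eqref{equ:square-manifold}.

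The remaining and most subtle task is identifying the two top horizontal maps. On the manifold side, the top map in \eqref{equ:square-manifold} is defined from a tubular neighborhood of $\Delta$, i.e.\,an embedding of pairs $(TM, TM\setminus\{0\text{-section}\})\hookrightarrow (M\times M, M\times M\setminus\Delta)$ extending the diagonal. On the presheaf side, the top map in \eqref{equ:square} is induced by functoriality of $E_M$: given a $2$-point embedding $\phi\colon \bfR^d\sqcup\bfR^d\hookrightarrow\bfR^d$ and an embedding $e\colon\bfR^d\hookrightarrow M$, one forms the composite $e\circ\phi\in\Emb(\bfR^d\sqcup\bfR^d,M)$. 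Modulo the $\Aut(\bfR^d)^{\times 2}$-action on framings and the diagonal $\Aut(\bfR^d)$-action, the data $(\phi,e)$ is represented by a framed embedding $e$ together with a unit tangent direction $v\in S^{d-1}\subset\bfR^d$, and the map sends this to the pair $(e(0),e(\epsilon v))\in M\times M\setminus\Delta$ for small $\epsilon>0$ (the homotopy class being independent of $\epsilon$). This is exactly the tubular neighborhood construction carried out using the local chart $e$, and hence coincides with the manifold-side top map up to the contractible choice of tubular neighborhood.

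To upgrade these pointwise identifications into an equivalence of commutative squares in $\cS$, I would work in a fixed point-set model in which $\DiscInf_d$ has literal embedding spaces and $E_M(-)=\Emb(-,M)$ is represented by actual topological spaces, and then exhibit both squares as the image of a single commutative diagram of such spaces under a homotopy-invariant construction. A choice of tubular neighborhood of $\Delta$ (for instance induced by the exponential map of a Riemannian metric on $M$) provides the required coherent comparison. The main obstacle I foresee is making the shrinking-and-comparing step fully homotopy-coherent rather than only pointwise; contractibility of the space of tubular neighborhoods of $\Delta$ in $M\times M$, combined with the fact that the construction of \eqref{equ:square-manifold} depends on such a choice only up to canonical homotopy, should ultimately resolve this.
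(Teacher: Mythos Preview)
Your proposal is correct and follows essentially the same approach as the paper: identify the corners via evaluation at centres and framed configurations, then match the top horizontal maps using a tubular neighbourhood (your exponential map plays the role of the paper's retraction $e\colon TM\to M$ that embeds each fibre, which induces the key equivalence $\Fr(TM)\simeq\Emb(\bfR^d,M)$). The paper organises this slightly more cleanly by first rewriting both squares as maps of pairs into the common target $(M\times M,\,M\times M\setminus\Delta)$ and then constructing a single equivalence of source pairs compatible with these maps, which sidesteps the corner-by-corner coherence issue you flag at the end.
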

\begin{proof}
To see this, one first uses that evaluation at the centre induces an equivalence between the map of pairs \eqref{equ:map-pairs-square} for $X=E_M=\Emb(-,M)$ and the map of pairs
\begin{equation}\label{equ:map-pairs-square-2}\hspace{-0.3cm}
\begin{tikzcd}[row sep=0.2cm]
\Big(\Emb(\ast,\bfR^d)^{\times 2}\times_{\Diff(\bfR^d)}\Emb(\bfR^d,M),\Emb(\ast\sqcup \ast,\bfR^d)\times_{\Diff(\bfR^d)}\Emb(\bfR^d,M)\Big)\dar\\
 (\Emb(\ast,M)^{\times 2},\Emb(\ast\sqcup \ast,M))=(M\times M,M\times M\backslash \Delta).
 \end{tikzcd}
\end{equation}
induced by composition of embeddings. It thus suffices to construct an equivalence of pairs 
\[(TM,TM\backslash 0_M\})\xra{\simeq} \Big(\Emb(\ast,\bfR^d)^{\times 2}\times_{\Diff(\bfR^d)}\Emb(\bfR^d,M),\Emb(\ast\sqcup \ast,\bfR^d)\times_{\Diff(\bfR^d)}\Emb(\bfR^d,M)\Big)\]
such that its postcomposition with \eqref{equ:map-pairs-square-2} agrees with the map $(TM,TM\backslash 0_M)\ra (M\times M,M\times M\backslash \Delta)$ used in \cref{ex:con-manifold} (involving the choice of a tubular neighbourhood). By the uniqueness of tubular neighbourhoods, we may assume that the tubular neighbourhood $TM\hookrightarrow M\times M$ involved is on the first coordinate given by the projection and on the second coordinate given by a smooth retraction $e\colon TM\ra M$ of the $0$-section that is an embedding when restricted to each individual tangent space (to obtain $e$, fix a Riemannian metric and define $e$ to first fibrewise scale vectors to be shorter than the injectivity radius and then apply the exponential map; see e.g.\,\cite[Section 12]{BrockerJanich}). Such a map $e$ yields an $\GL_d(\bfR)$-equivariant equivalence $\smash{\Fr(TM)\xra{\simeq} \Emb(\bfR^d,M)}$ out of the frame bundle, which together with the $\oO(d)$-equivariant equivalence of pairs $\smash{(\bfR^d,\bfR^{d}\backslash \{0\})\xra{\simeq} (\Emb(*,\bfR^d)^{\times 2},\Emb(*\sqcup *,\bfR^d))}$ given by $\bfR^d\ni x\mapsto(0,x)\in \Emb(*,\bfR^d)^{\times 2}$ yields an equivalence of pairs of the form
\[\begin{tikzcd}[row sep=0.3cm]
\big(\bfR^d\times_{\GL_d(\bfR)}\Fr(TM), \bfR^d\backslash\{0\}\times_{\GL_d(\bfR)}\Fr(TM)\big)\dar{\simeq}\\
\Big(\Emb(*,\bfR^d)^{\times 2}\times_{\Diff(\bfR^d)}\Emb(\bfR^d,M),\Emb(*\sqcup *,\bfR^d)\times_{\Diff(\bfR^d)}\Emb(\bfR^d,M)\Big)\end{tikzcd}\]
Going through the construction, the precomposition of this equivalence with the standard identification of its source with $(TM,TM\backslash0_M)$ has the desired properties.
\end{proof}

\begin{rem}\cref{lem:zig-zag-agrees} is closely related to a previous result by Prigge  \cite[Theorem 7.1.8]{PriggeThesis}. We briefly outline the relation between the two: If $M$ is a closed manifold, then the square \eqref{equ:square-manifold} is a pushout and yields a Poincaré embedding structure on the diagonal of $M$. By \cref{lem:zig-zag-agrees}, the square \eqref{equ:map-pairs-square} yields for $X=E_M$ also a Poincaré embedding structure on the diagonal of $|X|$. Since the latter square is natural $X$, this implies that for any map from a space $Z$ to $B\Aut_{\PSh(\DiscInf_{d,\le 2})}(E_M)$, the associated fibration over $Z$ with fibre $|E_M|\simeq M$ admits a fibrewise Poincaré embedding structure of its fibrewise diagonal. This recovers Theorem 7.1.8 loc.cit..
\end{rem}

\cref{thm:collapse-zigzag} now follows by merely putting things together:

\begin{proof}[Proof of \cref{thm:collapse-zigzag}]Specialising \eqref{equ:collapse-zig-zag} to \cref{ex:square-presheaf} yields a natural zig-zag 
\begin{equation}\label{equ:collapse-zigzag-detail}\bfS\lra D(\Sigma_+^\infty\lvert X\rvert)\lla p_!D_B(\Sigma^\infty C_B(i))\lra \Th(-\xi_X),\end{equation} 
as in the claim. The asserted property of it in the case $X=E_M$ follows by combining \cref{lem:zig-zag-agrees} with the discussion at the end of \cref{ex:con-manifold}.
\end{proof}

\subsection{Further remarks}We end this section with some remarks on the proof of \cref{thm:collapse-zigzag}.


\begin{rem}[Work of Naef--Safronov]\label{rem:naefsafronov}The proof of \cref{thm:collapse-zigzag} is closely related to work of Naef and Safronov \cite{NaefSafronov}. They explain in Section 4.2 loc.cit.\ how the square \eqref{equ:square-manifold} for a closed smooth manifold $M$ can be used to show that the constant parametrised spectrum $\bfS_M$ over $M$ is relatively dualisable in the sense of Definition 1.16 loc.cit.\,with dual $\Sigma^\infty_M S(-TM)$. Our construction above uses the  square to construct the copairing---in their notation $\eta$---while they in Proposition 4.13 loc.cit.~use it to construct the pairing---in their notation $\mathrm{PT}_\Delta$. These contain essentially the same information (see Remark 1.3 loc.cit.).
\end{rem}

\begin{rem}[Topological collapse maps]\label{rem:topological-collapse} \cref{thm:collapse-zigzag} also holds in the topological setting, that is, for $\DiscInf_d$ replaced by the analogous category $\smash{\DiscInf^t_d}$ involving \emph{topological} embeddings, $M$ being a closed \emph{topological} $d$-manifolds $M$, and $\xi_X$ being an $\bfR^d$-bundle (a fibre bundle with fibre $\bfR^d$) instead of a vector bundle, from which one obtains a spherical fibration by removing a section. In fact, the version \cref{thm:collapse-zigzag} for smooth manifolds can be deduced from that for topological ones by left Kan extending along the forgetful functor $\DiscInf^t_{d,\le 2}\ra \DiscInf_{d,\le 2}$.

We briefly explain how to obtain the claimed extension of  \cref{thm:collapse-zigzag} to the topological setting: \cref{con:square-spherical-fibration} goes through verbatim for $\bfR^d$-bundles. In \cref{ex:con-manifold} one replaces $TM$ by the \emph{topological} tangent bundle $T^tM$, obtained by choosing using Kister's theorem \cite{Kister} a normal $\bfR^d$-bundle inside the normal microbundle of the diagonal $M \subset M \times M$; this yields a tubular neighbourhood with the property used in \cref{lem:zig-zag-agrees} by construction. With this choice, the proof of \cref{lem:zig-zag-agrees} goes through with minor changes. Finally, the reference \cite[Proposition 4.12]{AndoBlumbergGepner} we cite in \cref{ex:con-manifold} smooth manifolds, but the proof can be extended to the topological setting:  

One picks a locally flat embedding $M \subset \bfR^{d+k}$ with a normal $\bfR^k$-bundle $\nu_M$ for $k \gg 0$ \cite[Theorem (B)]{HirschNormal}, which is unique up to isotopy after further stabilisations by Theorem (C) loc.cit.. The Pontryagin--Thom construction applied to $M \subseteq \bfR^{d+k}$ with normal bundle $\nu_M$ gives a pointed map $S^{d+k} \to \Th(\nu_M)$, which yields the stable collapse map $\bfS \to \Th(-TM)$. Performing the Pontryagin--Thom construction to both the inclusion $M \times M \subset \bfR^{d+k} \times M$ with normal bundle $\nu_M,$ as well as its precomposition with the diagonal inclusion gives a map $\Th(\nu_M) \wedge M_+ \to S^{d+k} \wedge M_+$. Postcomposing with the projection to $S^{d+k}$, yields a pairing $\Th(-T^tM) \otimes \Sigma^\infty M_+ \to \bfS$, which is the evaluation map that exhibits $\Th(-T^t M)$ as the Spanier--Whitehead dual of $\Sigma^\infty M_+$. Using this, the proof of \cite[Proposition 4.12]{AndoBlumbergGepner} goes through and shows that the wrong-way map in \eqref{equ:collapse-zig-zag} is in the case of \cref{ex:con-manifold} an equivalence and that the constructed map $\bfS \to \Th(-T^tM)$ agrees with the stable collapse map.
\end{rem}

\section{Finite residuals of mapping class groups}\label{sec:fr}
In this section we establish the remaining ingredient that we assumed in \cref{sec:main-ideas} and we have not proved yet, namely \cref{thm:fr-prelim} regarding the finite residual of the group
 \[\Gamma_g\coloneq \pi_0\,\Diff^{\plus}(W_g)\] of isotopy classes of orientation-preserving diffeomorphisms (we omit the dependence on $n$ in the notation) of the $g$-fold connected sum $W_g\coloneq \sharp^g(S^n\times S^n)$ for $n\ge3$ odd. The main input for this is an algebraic description of these mapping class groups which was established in \cite{Krannich-mcg}. We will also use the following properties of finite residuals $\fr(G)$ of discrete groups $G$ (see \cref{sec:e-of-bp} for the definition):

\begin{lem}\label{lem:grp-thy-lem}\ 
\begin{enumerate}[leftmargin=*]
\item \label{lem:grp-thy-lem:functoriality} For a group homomorphism $\varphi\colon E\ra E'$, we have $\fr(E)\subseteq \varphi^{-1}(\fr(E'))$. 
\item\label{lem:grp-thy-lem:iii} For a monomorphism of groups $\varphi\colon E\hookrightarrow E'$ such that its image $\varphi(E)\le E'$ has finite index, we have $\fr(E)=\varphi^{-1}(\fr(E'))$.
\item\label{lem:grp-thy-lem:ex} The following classes of groups are residually finite:
\begin{enumerate}
\item\label{ex:res-fin-fg} finitely generated abelian groups,
\item $\GL_n(\bfZ)$ for $n\ge1$,
\item subgroups of residually finite groups,
\item semidirect products $\Gamma\rtimes \Lambda$ with $\Gamma$ and $\Lambda$ residually finite and $\Gamma$ finitely generated.
\end{enumerate}
\item\label{lem:grp-thy-lem:ii} Fix a group $G$ with finite abelianisation and $\fr(G)=0$, a finitely generated free abelian group $A$, and two central extensions of $G$ of the form,
\[\smash{0\lra A\xlra{\iota_i} E_i\lra G\lra 0\quad\text{for }i=0,1},\]
such that their extension classes in $\oH^2(G;A)$ agree in  $\Hom(\oH_2(G),A)$. Then the finite residuals of $E_0$ and $E_1$ agree in the sense that 
\[\smash{\fr(E_i)\subset \iota_i(A)\quad\text{for }i=0,1\quad\text{and}\quad \iota_0^{-1}(\fr(E_0))=\iota_1^{-1}(\fr(E_1)).}\]
\item\label{lem:grp-thy-lem:iv}  Fix a group $G$ with finitely generated abelianisation and $\fr(G)=0$, a finitely generated abelian group $A$, and a central extension $0\ra A\ra E\ra G\ra 0$ whose extension class in $\oH^2(G;A)$ is trivial in $\Hom(\oH_2(G),A)$. Then the finite residual $\fr(E)$ is trivial.
\end{enumerate}

\end{lem}
\begin{proof}Item \ref{lem:grp-thy-lem:functoriality} follows directly from the definition of the finite residual. To show \ref{lem:grp-thy-lem:iii}, it suffices by \ref{lem:grp-thy-lem:functoriality} to show the inclusion $\varphi^{-1}(\fr(E'))\subset \fr(E)$, i.e.\,that for all  $x\in\varphi^{-1}(\fr(E'))$ and finite index normal subgroups $H\trianglelefteq E$, we have $x\in H$. Since $\varphi(E)\le E'$ and $H\trianglelefteq E$ have finite index, $\varphi(H)\le E'$ has finite index as well, so we can choose a finite index normal subgroup $\overline{H}\trianglelefteq E'$ with $\overline{H}\le \varphi(H)$. Since $\varphi(x)\in\fr(E')\le \overline{H}$, we conclude that $\varphi(x)\in \varphi(H)$, so we indeed have $x\in H$ since $\varphi$ is injective. For \ref{lem:grp-thy-lem:ex}, see e.g.\,\cite[Proposition 3.1.3, 3.1.4, 3.1.6, 3.5 (ii) on p.\,153]{Wilkes}. The claim $\fr(E_i)\subset \iota_i(A)$ in \ref{lem:grp-thy-lem:ii} follows from $\fr(G)=1$ and \ref{lem:grp-thy-lem:functoriality} applied to $E_i \to G$, so we are left to show $\iota_0^{-1}(\fr(E_0))=\iota_1^{-1}(\fr(E_1))$. To do so, we write $k$ for the (finite) order of $\oH_1(G)$ and $\lambda_i\in \oH^2(G;A)$ for the extension class of $E_i$. From the assumption that the $\lambda_i$ agree in $\Hom(\oH_2(G),A)$ together with the naturality of the universal coefficient theorem and the fact that multiplication by $k$ annihilates $\mathrm{Ext}(\oH_1(G),A)$ since it annihilates $\oH_1(G)$, we conclude $k\cdot \lambda_0=k\cdot \lambda_1\in\oH^2(G;A)$. The latter implies that there is a commutative diagram of groups with exact rows of the form
\[\begin{tikzcd}[row sep=0.4cm]
0\rar&A\arrow["k\cdot(-)",d,hook',swap]\rar{\iota_0}&E_0\arrow[d,hook',"\varphi_0"]\rar&G\rar\dar[equal]&0\\
0\rar&A\rar{\bar{\iota}}&\bar{E}\rar&G\rar&0\\
0\rar&A\arrow["k\cdot(-)",u,hook]\rar{\iota_1}&E_1\arrow[u,hook,"\varphi_1",swap]\rar&G\rar\uar[equal]&0
\end{tikzcd}\]
whose middle row is by definition the central extension classified by $k\cdot \lambda_0=k\cdot \lambda_1$. Since $A$ is finitely generated free abelian, the leftmost vertical morphisms are monomorphisms and their images have finite index, so the same holds for the middle vertical morphisms. From \ref{lem:grp-thy-lem:iii} applied to $\varphi_i$ we thus get $\fr(E_i)=\varphi_i^{-1}(\fr(\bar{E}))$ and thus $\iota_i^{-1}(\fr(E_i))=\iota_i^{-1}(\varphi_i^{-1}(\fr(\bar{E})))$ for $i=0,1$. But $\varphi_i\circ\iota_i=\bar{\iota}\circ (k\cdot(-))$ does not depend on $i$, so $\iota_0^{-1}(\fr(E_0))=\iota_1^{-1}(\fr(E_1))$ as claimed.

To show \ref{lem:grp-thy-lem:iv}, note that the assumption that the extension class vanishes in $\Hom(\oH_2(G),A)$ is---by the universal coefficient theorem---equivalent to the class being in the image of the canonical map $\Ext^1(\oH_1(G),A)\ra \oH^2(G;A)$. Under the equivalence between $\oH^2(G;A)$ and isomorphism classes of central extension of $G$ by $A$, this says that the extension $\smash{0\ra A\ra E\ra G\ra 0}$ (classified by the class in $\oH^2(G;A)$) is pulled back from an extension of abelian groups of the form $\smash{0\ra A\ra E'\ra \oH_1(G)\ra 0}$ (classififed by a class in $\Ext^1(\oH_1(G),A)$) along the abelianisation morphism $G\ra \oH_1(G)$. In particular, we have a monomorphism $E\hookrightarrow E'\times G$. Since $A$ and $\oH_1(G)$ are finitely generated abelian by assumption, the same holds for $E'$, so $E'$ is residually finite by the first part of \ref{lem:grp-thy-lem:ex}. Since $G$ is also residually finite, the product $E'\times G$ is too (using the final part of \ref{lem:grp-thy-lem:ex}), so $\fr(E'\times G)=1$. The claim then follows from an application of \ref{lem:grp-thy-lem:functoriality} to the monomorphism $E\hookrightarrow E'\times G$.
\end{proof}

Before turning to the computation of the finite residual $\fr(\Gamma_g)$, we will discuss another preparatory lemma. It involves the \emph{signature class} $\sgn\in\oH^2(\Sp_{2g}(\bfZ);\bfZ)$ (sometimes also denoted $\mathrm{sgn}$) which is a second cohomology class of the symplectic group over the integers, constructed in terms of signatures of certain symmetric bilinear forms that one can associate to a bundle of symplectic forms over surfaces (see e.g.\,\cite[p.\,471]{KRW-arithmetic} for a definition for $g\ge3$; for $g\le 2$ the class is defined via pullback along the inclusions $\Sp_{2g}(\bfZ)\le \Sp_{2g'}(\bfZ) $ for $g\le g'$).

\begin{lem}\label{lem:criterion-z-fr}Fix $g\ge2$, a subgroup $\Lambda\le \Sp_{2g}(\bfZ)$, and a cohomology class $\lambda\in\oH^2(\Lambda;\bfZ)$ such that the following conditions are satisfied:
\begin{enumerate}
\item\label{lem:criterion-z-fr:i} $\Lambda\le \Sp_{2g}(\bfZ)$ has finite index.
\item\label{lem:criterion-z-fr:ii} The abelianisation of $\Lambda$ is finite.
\item\label{lem:criterion-z-fr:iii} The class $8\cdot \lambda\in\oH^2(\Lambda;\bfZ)$ and the pullback of the signature class $\sgn\in\oH^2(\Sp_{2g}(\bfZ);\bfZ)$ have the same image in $\Hom(\oH_2(\Lambda;\bfZ),\bfZ)$. 
\end{enumerate} 
Then the extension $\smash{0\ra \bfZ\xsra{\iota}E(\lambda)\ra \Lambda\ra 0}$ classified by $\lambda$ satisfies $\fr(E(\lambda))=\iota(\bfZ)$. 
\end{lem}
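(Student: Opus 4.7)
I would prove the two inclusions $\fr(E(\lambda))\subseteq \iota(\bfZ)$ and $\fr(E(\lambda))\supseteq \iota(\bfZ)$ separately, reducing the latter to a non--residual--finiteness statement for the signature extension of the full group $\Sp_{2g}(\bfZ)$ via a chain of comparisons organised by \cref{lem:grp-thy-lem}. For the first inclusion: since $\Sp_{2g}(\bfZ)$ is a finitely generated linear group, Mal'cev's theorem gives $\fr(\Sp_{2g}(\bfZ))=0$, and hence $\fr(\Lambda)=0$ as $\Lambda\le \Sp_{2g}(\bfZ)$ is a subgroup. Applying \cref{lem:grp-thy-lem}\,\ref{lem:grp-thy-lem:functoriality} to the projection $E(\lambda)\twoheadrightarrow\Lambda$ immediately yields $\fr(E(\lambda))\subseteq \iota(\bfZ)$.

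\textbf{Chain of comparisons.} For the reverse inclusion I would construct a chain linking $E(\lambda)$ to the extension $E(\sgn)$ of $\Sp_{2g}(\bfZ)$ by $\bfZ$ classified by $\sgn$. (a) Choosing a $2$--cocycle $c$ representing $\lambda$, the formula $(a,g)\mapsto(8a,g)$ defines an injective homomorphism $\varphi\colon E(\lambda)\hookrightarrow E(8\lambda)$ of index~$8$ satisfying $\varphi\circ \iota_\lambda = \iota_{8\lambda}\circ\mu_8$, where $\mu_8\colon \bfZ\to\bfZ$ is multiplication by $8$; \cref{lem:grp-thy-lem}\,\ref{lem:grp-thy-lem:iii} then gives
\[\iota_\lambda^{-1}\bigl(\fr(E(\lambda))\bigr)=\mu_8^{-1}\Bigl(\iota_{8\lambda}^{-1}\bigl(\fr(E(8\lambda))\bigr)\Bigr).\]
(b) By hypotheses (ii) and (iii), together with the already established vanishing $\fr(\Lambda)=0$, the two central extensions $E(8\lambda)$ and $E(\sgn|_\Lambda)$ of $\Lambda$ by $\bfZ$ satisfy the hypotheses of \cref{lem:grp-thy-lem}\,\ref{lem:grp-thy-lem:ii}, yielding $\iota_{8\lambda}^{-1}(\fr(E(8\lambda)))=\iota_{\sgn|_\Lambda}^{-1}(\fr(E(\sgn|_\Lambda)))$. (c) The pullback $E(\sgn|_\Lambda)\cong \Lambda\times_{\Sp_{2g}(\bfZ)}E(\sgn)$ embeds into $E(\sgn)$ as a subgroup of index $[\Sp_{2g}(\bfZ):\Lambda]<\infty$ by hypothesis (i), so another application of \cref{lem:grp-thy-lem}\,\ref{lem:grp-thy-lem:iii} gives $\iota_{\sgn|_\Lambda}^{-1}(\fr(E(\sgn|_\Lambda)))=\iota_\sgn^{-1}(\fr(E(\sgn)))$. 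Chaining the three identities produces
\[\iota_\lambda^{-1}\bigl(\fr(E(\lambda))\bigr)=\mu_8^{-1}\Bigl(\iota_\sgn^{-1}\bigl(\fr(E(\sgn))\bigr)\Bigr).\]

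\textbf{Final input and main obstacle.} The reverse inclusion is equivalent to $\iota_\lambda^{-1}(\fr(E(\lambda)))=\bfZ$, so in view of the displayed identity it suffices to show $8\bfZ\subseteq \iota_\sgn^{-1}(\fr(E(\sgn)))$. This is the substantive ingredient and the main obstacle: for $g\ge 2$ the signature extension $E(\sgn)$ of $\Sp_{2g}(\bfZ)$ is not residually finite, and in fact $\iota_\sgn(\bfZ)\subseteq \fr(E(\sgn))$. This arithmetic non--residual--finiteness statement (essentially due to Deligne) is the key input to be imported from \cite{KRW-arithmetic}; granting it, $\mu_8^{-1}(\bfZ)=\bfZ$ forces $\iota_\lambda^{-1}(\fr(E(\lambda)))=\bfZ$, completing the proof.
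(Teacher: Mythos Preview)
Your approach is the same as the paper's: both pass from $E(\lambda)$ to $E(8\lambda)$ via the index-$8$ inclusion, compare $E(8\lambda)$ with the pullback $E(\sgn|_\Lambda)$ using \cref{lem:grp-thy-lem}\,\ref{lem:grp-thy-lem:ii}, then pass to $E(\sgn)$ using \cref{lem:grp-thy-lem}\,\ref{lem:grp-thy-lem:iii}, and finally invoke the computation of $\fr(E(\sgn))$ from \cite{KRW-arithmetic}.

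There is one factual slip in your final step: the assertion $\iota_\sgn(\bfZ)\subseteq \fr(E(\sgn))$ is false. The result imported from \cite[p.\,472]{KRW-arithmetic} is the equality $\fr(E(\sgn))=\iota_\sgn(8\bfZ)$, which is strictly smaller; for instance $\iota_\sgn(1)$ survives in a finite quotient. This does not break your argument, since what you actually need is only $8\bfZ\subseteq \iota_\sgn^{-1}(\fr(E(\sgn)))$, and that is precisely what the correct statement provides. In fact, because your chain of comparisons already yields an \emph{equality} $\iota_\lambda^{-1}(\fr(E(\lambda)))=\mu_8^{-1}\bigl(\iota_\sgn^{-1}(\fr(E(\sgn)))\bigr)$, plugging in $\iota_\sgn^{-1}(\fr(E(\sgn)))=8\bfZ$ gives $\iota_\lambda^{-1}(\fr(E(\lambda)))=\bfZ$ in one stroke, and your separate treatment of the easy inclusion becomes redundant---this is how the paper organises it.
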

\begin{proof}Consider the maps of extensions
\[\begin{tikzcd}[column sep=0.5cm]
0\rar& \bfZ\rar{\iota_\lambda}\arrow[d,"8\cdot(-)",hook,swap]&E(\lambda)\rar\arrow[d,hook,"\alpha"]& \Lambda\arrow[d,equal]\rar &0\\
0\rar& \bfZ\rar{\iota_{8\cdot\lambda}}&E(8\cdot \lambda)\rar& \Lambda\rar &0
\end{tikzcd}\quad\quad \begin{tikzcd}[column sep=0.5cm]
0\rar& \bfZ\rar{\iota_{\sgn'}}\arrow[d,equal]&E(\sgn')\rar\arrow[d,hook,"
\beta"]& \Lambda\arrow[d,"\subset",hook]\rar &0\\
0\rar& \bfZ\rar{\iota_{\sgn}}&E(\sgn)\rar& \Sp_{2g}(\bfZ)\rar &0
\end{tikzcd}\]
where the left-hand diagram arises from pushing out the top extension along $8\cdot(-)\colon \bfZ\hookrightarrow \bfZ$ and the right-hand diagram arises from pulling back the bottom extension classified by the signature class along the inclusion $\Lambda\le \Sp_{2g}(\bfZ)$. By the discussion in \cite[p.\,472]{KRW-arithmetic}, we have $\fr(E(\sgn))=\iota_{\sgn}(8\cdot \bfZ)$. Applying \cref{lem:grp-thy-lem} \ref{lem:grp-thy-lem:iii} to $\beta$ using assumption \ref{lem:criterion-z-fr:i} yields $\smash{\fr(E(\sgn'))=\beta^{-1}(\iota_{\sgn}(8\cdot \bfZ))=\iota_{\sgn'}(8\cdot \bfZ)}$. Moreover, combining assumptions \ref{lem:criterion-z-fr:ii} and \ref{lem:criterion-z-fr:iii} with the fact that $\fr(\Lambda)=1$ by \cref{lem:grp-thy-lem} \ref{lem:grp-thy-lem:ex}, we may apply \cref{lem:grp-thy-lem} \ref{lem:grp-thy-lem:ii} to $E(8\cdot\lambda)$ and $E(\sgn')$ to conclude $\smash{\iota_{8\cdot\lambda}^{-1}(\fr(E(8\cdot\lambda)))=\iota_{\sgn'}^{-1}(\fr(E(\sgn')))=8\cdot\bfZ}$ and $\smash{\fr(E(8\cdot\lambda))\subset \iota_{8\cdot\lambda}(\bfZ)}$, so we conclude the equality $\smash{\fr(E(8\cdot\lambda))=\iota_{8\cdot\lambda}(8\cdot\bfZ)}$. Finally, from an application of \cref{lem:grp-thy-lem} \ref{lem:grp-thy-lem:iii} to $\alpha$, we get $\smash{\fr(E(\lambda))=\alpha^{-1}(\iota_{8\cdot\lambda}(8\cdot\bfZ))=\iota_\lambda(\bfZ)}$ as claimed. 
\end{proof}

\subsection{Proof of \cref{thm:fr-prelim}}\label{sec:fr-computation}

We fix $g\ge0$ and an odd integer $n\ge3$ throughout and adopt the notation from \cite[Section 1]{Krannich-mcg}. In particular:
\begin{enumerate}[leftmargin=0.6cm,label=(\alph*)]
\item $G_g\le \Sp_{2g}(\bfZ)$ denotes the image of the morphism $p\colon \Gamma_{g}\ra \Sp_{2g}(\bfZ)$ given by the action on $\smash{\oH_{n}(W_{g};\bfZ)\cong\bfZ^{2g}}$. The latter isomorphism involves the choice of a hyperbolic basis with respect to the intersection form on $\oH_{n}(W_{g,1};\bfZ)$, which we fix once and for all. For $n=3,7$, we have $G_g=\Sp_{2g}(\bfZ)$ and for $n\neq3,7$ the subgroup $G_g\le \Sp_{2g}(\bfZ)$ has finite index and agrees with a certain subgroup $\smash{G_g=\Sp_{2g}^q(\bfZ)\le \Sp_{2g}(\bfZ)}$ whose definition involves a quadratic refinement (see e.g.\,Section 1.2 loc.cit.).
\item\label{enum:gamma-ext} Fixing an embedded codimension $0$ disc $D^{2n}\subset W_g$, the groups $\Gamma_{g,1}\coloneq \pi_0(\Diff_\partial(W_{g,1}))$ and $\Gamma_{g,\half}\coloneq\pi_0(\Diff_{\half\partial}(W_{g,1}))$ are the groups of isotopy classes of diffeomorphisms of $W_{g,1}\coloneq (\sharp^g(S^n\times S^n))\backslash \interior(D^{2n})$ that fix pointwise a neighbourhood of the boundary or a fixed embedded codimension $0$ disc in the boundary, respectively. We need a few facts on these groups: Firstly, the morphism $\Gamma_{g,1}\ra \Gamma_g$ induced by extending diffeomorphisms along $W_{g,1}\subset W_g$ by the identity is an isomorphism (see Lemma 1.1 loc.cit.). Secondly, the forgetful morphism $\Gamma_{g,1}\ra \Gamma_{g,\half}$ is surjective and its kernel can be identified with $\Theta_{2n+1}$ via the morphism $\Theta_{2n+1}\cong\pi_0(\Diff_\partial(D^{2n}))\ra \Gamma_{g,1}$ induced by extending diffeomorphisms of a disc $D^{2n}\subset \interior(W_{g,1})$ by the identity (see (1.6) loc.cit.). Thirdly, the morphism $p\colon \Gamma_{g,1}\ra G_g$ factors over $\Gamma_{g,\half}$ (see (1.7) loc.cit.).
\item\label{enum:gamma-semidirect} Fixing a stable framing $F$ of $W_{g,1}$, acting on $F$ yields a morphism $(s_F,p)\colon \Gamma_{g,\half}\rightarrow (\bfZ^{2g}\otimes \pi_n(\SO))\rtimes G_g$ (see p.\,81 loc.cit.). By combining Equation (2.2) loc.cit.\,with Lemma 2.1 loc.cit., this morphism is an isomorphism for $n\neq3,7$, and is for $n= 3,7$ a monomorphism onto a subgroup of finite index. In particular, $\Gamma_{g,\half}$ is isomorphic to a subgroup of $(\bfZ^{2g}\otimes \pi_n(\SO))\rtimes G_g\le (\bfZ^{2g}\otimes \pi_n(\SO))\rtimes \GL_{2g}(\bfZ)$, so $\fr(\Gamma_{g,\half})=1$ as a result of \cref{lem:grp-thy-lem} \ref{lem:grp-thy-lem:ex}.
\end{enumerate}

The main ingredient in the proof of \cref{thm:fr-prelim} is the identification from loc.cit. of the extension of groups resulting from \ref{enum:gamma-ext} of the form
\begin{equation}\label{equ:main-ext}0\lra \Theta_{2n+1}\lra \Gamma_{g,1}\lra \Gamma_{g,\half}\lra 0.\end{equation}
 This identification involves two morphisms
\begin{equation}\label{equ:two-morphisms}\sgn\colon \oH_2(\Sp_{2g}(\bfZ);\bfZ)\lra \bfZ\quad\text{and}\quad\chi^2\colon \oH_2(\bfZ^{2g}\rtimes
\Sp_{2g}(\bfZ);\bfZ)\lra \bfZ,\end{equation}
defined in Sections 3.4-3.5 loc.cit., which enjoy the following properties:
\begin{enumerate}[leftmargin=*]
\item\label{enum:sgn-lift} When pulled back along $\smash{\Sp_{2g}^q(\bfZ)\le  \Sp_{2g}(\bfZ)}$, the first morphism becomes  divisible by $8$ and there is a preferred lift of the resulting morphism $\sgn/8\colon \oH_2(\Sp_{2g}^q(\bfZ);\bfZ)\ra \bfZ$ to a cohomology class (see Definition 3.17 (i) loc.cit.) \begin{equation}\label{equ:sgnlift}\smash{\textstyle{\frac{\sgn}{8}\in \oH^2(\Sp_{2g}^q(\bfZ);\bfZ)}}.\end{equation}
\item When pulled back along the inclusion $\bfZ^{2g}\rtimes\Sp_{2g}^q(\bfZ)\longrightarrow \bfZ^{2g}\rtimes\Sp_{2g}(\bfZ)$, the second morphism becomes divisible by $2$ and there is a preferred lift of the resulting morphism $\chi^2/2\colon \oH_2(\bfZ^{2g}\rtimes\Sp_{2g}^q(\bfZ);\bfZ)\ra \bfZ$ to a class (see Definition 3.20 (i) loc.cit.) \begin{equation}\label{equ:chisquaredlift}\smash{\textstyle{\frac{\chi^2}{2}\in \oH^2(\bfZ^{2g}\rtimes\Sp_{2g}^q(\bfZ);\bfZ)}.}\end{equation}
\item\label{enum:sgnchsquare-lift} When pulled back along the inclusion $(s_F,p)\colon \Gamma_{g,\half}\hookrightarrow \bfZ^{2g}\rtimes\Sp_{2g}(\bfZ)$, the morphism $\chi^2-\sgn\colon \oH_2(\bfZ^{2g}\rtimes
\Sp_{2g}(\bfZ);\bfZ)\ra \bfZ$ becomes for $n=3,7$ divisible by $8$ and there is a preferred lift to the resulting morphism $(\chi^2-\sgn)/8\colon \oH_2(\Gamma_{g,\half};\bfZ)\ra \bfZ$ to a  class \[\smash{\textstyle{\frac{\chi^2-\sgn}{8}\in \oH^2(\Gamma_{g,\half};\bfZ)}}.\]
\item \label{enum:sgn-genusone} The morphism $\sgn$ is induced by the same-named signature class $\sgn\in\oH^2(\Sp_{2g}(\bfZ);\bfZ)$ featuring in \cref{lem:criterion-z-fr} above. For $g=1$, the morphism $\sgn\colon\oH_2(\Sp_{2g}(\bfZ);\bfZ)\ra \bfZ$ is trivial (see Lemma 3.15 loc.cit.) and the lift \eqref{equ:sgnlift} vanishes too (see Lemma 3.21 loc.cit.).

\end{enumerate}

In addition to the properties \ref{enum:sgn-lift}--\ref{enum:sgn-genusone}, we will use the following of the morphism $\chi^2$:
\begin{lem}\label{lem:chi-scaling} Writing $(k,\id)\colon \bfZ^{2g}\rtimes\Sp_{2g}(\bfZ)\rightarrow \bfZ^{2g}\rtimes\Sp_{2g}(\bfZ)$ for the morphism given by multiplication in $\bfZ^{2g}$ with a fixed integer $k\in \bfZ$, the following relations hold:
\begin{enumerate}
\item \label{enum:first-relation-chi}
$\chi^{2}\circ (k,\id)_* =k^2\cdot \chi^2\in\Hom(\oH_2(\bfZ^{2g}\rtimes
\Sp_{2g}(\bfZ);\bfZ),\bfZ)$ and 
\item \label{enum:second-relation-chi}$(k,\id)^*\frac{\chi^2}{2}=k^2\cdot \frac{\chi^2}{2}\in\oH^2(\bfZ^{2g}\rtimes\Sp_{2g}^q(\bfZ);\bfZ).$
\end{enumerate}
In particular, the relation \ref{enum:first-relation-chi} implies for $k=0$ that the precomposition $\chi^2\circ \inc_*\colon \oH_2(\Sp_{2g}(\bfZ);\bfZ)\ra \bfZ$ of $\chi^2$ with the map induced by the inclusion $\smash{\inc\colon \Sp_{2g}(\bfZ)\hookrightarrow \bfZ^{2g}\rtimes\Sp_{2g}(\bfZ)}$ vanishes.
\end{lem}
\begin{proof}We start by recalling the definition of the morphism $\smash{\chi^2\colon \oH_2(\bfZ^{2g}\rtimes
\Sp_{2g}(\bfZ);\bfZ)\ra \bfZ}$ from \cite[Section 3.5]{Krannich-mcg}. Any cohomology class in $\oH_2(\bfZ^{2g}\rtimes\Sp_{2g}(\bfZ);\bfZ)$ can be represented by a continuous map $\smash{f\colon S\ra B(\bfZ^{2g}\rtimes\Sp_{2g}(\bfZ))}$ from an oriented closed connected surface $S$. The composition $\smash{f_2\coloneq (B(\pr_2)\circ f)\colon S\ra \BSp_{2g}(\bfZ)}$ gives rise to a symmetric bilinear form $\smash{\langle-,-\rangle_{f_2}\colon \oH^1(S;f_2^*\bfZ^{2g})^{\otimes 2}\ra \bfZ}$ (see Section 3.4 loc.cit.) and $f_1\coloneq \smash{(\pr_1\circ \pi_1(f))\colon \pi_1(S)\ra \bfZ^{2g}}$ defines a $1$-cocycle, so a class $\smash{[f_1]\in \oH^1(S;f_2^*\bfZ^{2g})}$. In these terms, the morphism $\chi^2\colon \oH_2(\bfZ^{2g}\rtimes
\Sp_{2g}(\bfZ);\bfZ)\ra \bfZ$ is defined to send a class represented by $\smash{f\colon S\ra B(\bfZ^{2g}\rtimes\Sp_{2g}(\bfZ))}$ to $\smash{\langle[f_1],[f_1]\rangle_{f_2}\in\bfZ}$ (see Section 3.5 loc.cit.). Using this, we compute 
\[\chi^2((k,\id)_*([f]))=\chi^2([B(k,\id)\circ f])=\langle [(B(k,\id)\circ f)_1],[(B(k,\id)\circ f)_1]\rangle_{f_2}=\langle k\cdot [f_1],k\cdot [f_1]\rangle_{f_2}\]
for any class $\smash{[f\colon S\ra B(\bfZ^{2g}\rtimes\Sp_{2g}(\bfZ))]\in \oH^2(\bfZ^{2g}\rtimes\Sp_{2g}(\bfZ);\bfZ)}$. Here the first equality holds by the definition of $\chi^2$ as just recalled, the second equality holds by observing that $(B(k,\id)\circ f)_2=f_2$ since $\pr_2\circ (k,\id)=\pr_2$, and the third equality holds since $(B(k,\id)\circ f)_1=\pr_1\circ \pi_1(B(k,\id)\circ f)=\pr_1\circ (k,\id)\circ \pi_1(f)=(k\cdot-)\circ \pr_1\circ \pi_1(f)=k\cdot f_1$. Since $\langle -,-\rangle_{f_2}$ is bilinear, we have $\chi^2((k,\id)_*([f]))=\langle k\cdot [f_1],k\cdot [f_1]\rangle_{f_2}=k^2\cdot \langle [f_1],[f_1]\rangle_{f_2}=k^2\cdot \chi^2([f])$, which proves the relation in \ref{enum:first-relation-chi}. To establish the relation \ref{enum:second-relation-chi}, we first recall from Definition 3.20 (i) loc.cit. the lift of the morphism $\smash{(\chi^2\circ\inc_*)/2\in\Hom(\oH_2(\bfZ^{2g}\rtimes\Sp_{2g}^q(\bfZ);\bfZ),\bfZ)}$ to a cohomology class $\smash{\frac{\chi^2}{2}\in\oH^2(\bfZ^{2g}\rtimes\Sp_{2g}^q(\bfZ);\bfZ)}$. For $g\ge3$, there is a certain morphism $\smash{a\colon \bfZ/4\bfZ\ra \Sp_{2g}^q(\bfZ)\le \bfZ^{2g}\rtimes\Sp_{2g}^q(\bfZ)}$ defined in Section 3.4 loc.cit.\,which features in a splitting of the form (see p.\,81 loc.cit.)
\begin{equation}\label{equ:chi2-splitting}\smash{a^*\oplus h\colon \oH^2(\bfZ^{2g}\rtimes\Sp_{2g}^q(\bfZ);\bfZ)\xlra{\cong} \oH^2(\bfZ/4\bfZ)\oplus\Hom(\oH_2(\bfZ^{2g}\rtimes\Sp_{2g}^q(\bfZ)),\bfZ)}\end{equation}
where $h$ is the map from the universal coefficient theorem. For $g\ge 3$, the lift $\smash{\frac{\chi^2}{2}}$ is defined as the inverse of $(0,\chi^2/2)$ under this isomorphism, and for $g\le 2$ it is defined by pulling back $\smash{\frac{\chi^2}{2}}$ along the inclusion $\bfZ^{2g}\rtimes\Sp_{2g}^q(\bfZ)\le \bfZ^{2g'}\rtimes\Sp_{2g'}^q(\bfZ)$ for $g'\gg0$. Since $\smash{a\colon \bfZ/4\ra \bfZ^{2g}\rtimes\Sp_{2g}^q(\bfZ)}$ factors through the inclusion $\smash{\Sp_{2g}(\bfZ)\le \bfZ^{2g}\rtimes\Sp_{2g}(\bfZ)}$, it commutes with the morphism $\smash{(k,\id)\colon \bfZ^{2g}\rtimes\Sp_{2g}(\bfZ)\rightarrow \bfZ^{2g}\rtimes\Sp_{2g}(\bfZ)}$, so the induced map is with respect to the splitting \eqref{equ:chi2-splitting} given by the identity on the first summand and by precomposition with $(k,id)_*$ on the second. Hence, since $\chi^2\circ(k,\id)_*=k^2$ by the first relation \ref{enum:first-relation-chi}, we can deduce the claimed relation $\smash{(k,\id)^*\frac{\chi^2}{2}=k^2\cdot \frac{\chi^2}{2}}$.
\end{proof}

The identification of the extension \eqref{equ:main-ext} in \cite{Krannich-mcg} that we will rely on also involves two exotic spheres $\Sigma_P,\Sigma_Q\in\Theta_{2n+1}$, which are defined in Section 3.2.1 loc.cit.. All we need to know about them is that $\Sigma_P$ generates the subgroup $\bP_{2n+2}\le \Theta_{2n+1}$. In terms of the pullbacks of the cohomology classes in \ref{enum:sgn-lift}-\ref{enum:sgnchsquare-lift} above to $\Gamma_{g,\half}$ and the two exotic spheres, the extension \eqref{equ:main-ext} is classified by the class (see Theorem 3.22 and Lemma 3.4 loc.cit.)
\begin{equation}\label{equ:extension-class}
\begin{cases}
\frac{\sgn}{8}\cdot \Sigma_P&n\equiv 1(4)\\
\frac{\sgn}{8}\cdot \Sigma_P+\frac{\chi^2}{2}\cdot \Sigma_Q&n\equiv 3(4)\text{ and }n\neq 3,7\\
-\frac{\chi^2-\sgn}{8}\cdot\Sigma_P&n=3,7
\end{cases}\quad \in\oH^2(\Gamma_{g,\half};\Theta_{2n+1}).\end{equation}
We now turn to the proof of \cref{thm:fr-prelim}, which is divided into four sub-claims. We write \[\mu\coloneq\begin{cases}
\frac{\sgn}{8}&n\neq3,7\\
-\frac{\chi^2-\sgn}{8}&n=3,7
\end{cases}\quad \in\oH^2(\Gamma_{g,\half};\bfZ)\]
and write $0\ra \Theta_{2n+1}\ra E(\mu\cdot\Sigma_P)\ra \Gamma_{g,\half}\ra 0$ for the extension classified by $\mu\cdot\Sigma_P$.

\medskip

\noindent \textbf{Claim \circled{1}}: We have $\fr(\Gamma_{g,1})\cap\Theta_{2n+1}=\fr(E(\mu\cdot \Sigma_P))\cap\Theta_{2n+1}$.
\begin{proof}For $n\equiv 1(4)$ or $n=3,7$, we have $\Gamma_{g,1}=E(\mu\cdot \Sigma_P)$ by \eqref{equ:extension-class}, so there is nothing to show. We thus assume $n\equiv 3(4)$ and $n\neq 3,7$. Fixing an isomorphism $\pi_n(\oO)\cong\bfZ$, we define a self-monomorphism $\kappa\colon \Gamma_{g,\half}\hookrightarrow \Gamma_{g,\half}$ by the following commutative square whose horizontal arrows are isomorphisms by \ref{enum:gamma-semidirect} and whose left vertical map is induced by multiplication by $k$ in $\bfZ^{2g}$ for some fixed number $k\neq0$
\begin{equation}\label{equ:dfn-of-kappa}
\begin{tikzcd}[column sep=0.6cm, row sep=0.5cm]
\Gamma_{g,\half}\arrow["{(s_F,p)}","\cong"',r]\arrow["\kappa",d,hook,swap]&\bfZ^{2g}\rtimes \Sp_{2g}^q(\bfZ)\dar[d,"{(k,\id)}",hook]\\
\Gamma_{g,\half}\arrow["{(s_F,p)}","\cong"',r]&\bfZ^{2g}\rtimes \Sp_{2g}^q(\bfZ).
\end{tikzcd}
\end{equation}
 We now consider the map of extensions
\begin{equation}\label{equ:ext-map-case-i}
\begin{tikzcd}[row sep=0.4cm]
0\rar&\Theta_{2n+1}\rar\dar[equal]&\kappa^*\Gamma_{g,1}\rar\arrow[d,hook]&\Gamma_{g,\half}\rar\arrow[d,hook,"{\kappa}"]&0\\
0\rar&\Theta_{2n+1}\rar&\Gamma_{g,1}\rar&\Gamma_{g,\half}\rar&0
\end{tikzcd}
\end{equation}
obtained by pulling back the extension \eqref{equ:main-ext} along $\kappa$. From \cref{lem:grp-thy-lem} \ref{lem:grp-thy-lem:iii}, we obtain $\fr(\Gamma_{g,1})\cap\Theta_{2n+1}=\fr(\kappa^*\Gamma_{g,1})\cap\Theta_{2n+1}$, so it suffices to show that there exists $k\neq0$ such that the top extension in \eqref{equ:ext-map-case-i} is classified by $\mu=\frac{\sgn}{8}\cdot \Sigma_P$. Since the bottom extension is classified by $\smash{\frac{\sgn}{8}+\frac{\chi^2}{2}\in \oH^2(\Gamma_{g,\half};\Theta_{2n+1})}$ by \eqref{equ:extension-class}, this is equivalent to showing that
\begin{equation}\label{equ:cohomology-id-case-i}\smash{\textstyle{\kappa^*(\frac{\sgn}{8})\cdot \Sigma_P+\kappa^*(\frac{\chi^2}{2})\cdot \Sigma_Q=\frac{\sgn}{8} \in\oH^2(\Gamma_{g,\half};\Theta_{2n+1})}}.\end{equation}
Since the endomorphism $(k,\id)$ in \eqref{equ:dfn-of-kappa} commutes with the projection to $\smash{\Sp_{2g}^q(\bfZ)}$, the same applies to $\kappa$ in \eqref{equ:dfn-of-kappa}, so as $\smash{\frac{\sgn}{8}}\in\oH^2(\Gamma_{g,\half};\Theta_{2n+1})$ is pulled back along $p\colon\Gamma_{g,\half} \ra\Sp_{2g}^q(\bfZ)$, this implies that $\smash{\kappa^*(\frac{\sgn}{8})=\frac{\sgn}{8}}$, so it remains to show that $\smash{\kappa^*(\frac{\chi^2}{2})\cdot \Sigma_Q=0}$ in $\oH^2(\Gamma_{g,\half};\Theta_{2n+1})$ for some $k\neq0$. Since the class $\smash{\frac{\chi^2}{2}}$ is pulled back along the isomorphism $(s_F,p)$ in \eqref{equ:dfn-of-kappa}, it suffices to show that $\smash{(k,\id)^*(\frac{\chi^2}{2})\cdot \Sigma_Q=0}$ in $\oH^2(\bfZ^{2g}\rtimes\Sp^q_{2g}(\bfZ);\Theta_{2n+1})$. But in view of \cref{lem:chi-scaling}, we have $\smash{(k,\id)^*(\frac{\chi^2}{2})=k^2\cdot \frac{\chi^2}{2}}$, so if we choose $k$ to be any multiple of the order of $\Sigma_Q$ in $\Theta_{2n+1}$, then we can conclude that $\smash{(k,\id)^*(\frac{\chi^2}{2})\cdot \Sigma_Q= \frac{\chi^2}{2}\cdot k^2\cdot \Sigma_Q=0}$ holds as claimed. 
\end{proof}

\noindent \textbf{Claim \circled{2}}: We have $\fr(\Gamma_{g,1})\subset\bP_{2n+2}$. 
\begin{proof}
Since $\fr(\Gamma_{g,\half})=1$ by \ref{enum:gamma-semidirect}, we have
 $\fr(\Gamma_{g,1})\subset\Theta_{2n+1}$ by applying \cref{lem:grp-thy-lem} \ref{lem:grp-thy-lem:functoriality} to the morphism $\Gamma_{g,1}\ra \Gamma_{g,\half}$ in \eqref{equ:main-ext} with kernel $\Theta_{2n+1}$. In particular, we have $\fr(\Gamma_{g,1})=\fr(\Gamma_{g,1})\cap \Theta_{2n+1}$, so together with Claim \circled{1}, we get $\fr(\Gamma_{g,1})=\fr(E(\mu\cdot\Sigma_P))\cap\Theta_{2n+1}$. Now consider the map of central extensions 
\[
\begin{tikzcd}[row sep=0.4cm]
0\rar&\Theta_{2n+1}\rar\arrow[d,two heads]&E(\mu\cdot\Sigma_P)\rar\arrow[d,two heads]&\Gamma_{g,\half}\rar\arrow[d,equal]&0\\
0\rar&\Theta_{2n+1}/\bP_{2n+2}\rar&E(\mu\cdot\Sigma_P)/\bP_{2n+2}\rar&\Gamma_{g,\half}\rar&0
\end{tikzcd}
\]
 where the bottom extension is obtained from the top extension by taking quotients by the subgroup $\bP_{2n+2}\le \Theta_{2n+1}$. This extension is classified by the image of $\mu\cdot\Sigma_P$ under the morphism $\oH^2(\Gamma_{g,\half};\Theta_{2n+1})\ra \oH^2(\Gamma_{g,\half};\Theta_{2n+1}/\bP_{2n+2})$ induced by the quotient map $\Theta_{2n+1}\ra \Theta_{2n+1}/\bP_{2n+2}$. Since $\Sigma_P$ is contained in $\bP_{2n+2}$, this image is trivial, so it follows that $E(\mu\cdot\Sigma_P)/\bP_{2n+2}\cong \Theta_{2n+1}\times \Gamma_{g,\half}$. Thus, since $\fr(\Gamma_{g,1})=1$, we deduce from \cref{lem:grp-thy-lem} \ref{lem:grp-thy-lem:ex} that $\fr(E(\mu\cdot\Sigma_P)/\bP_{2n+2})=1$, so applying \cref{lem:grp-thy-lem} \ref{lem:grp-thy-lem:functoriality} to to $E(\mu\cdot\Sigma_P)\ra E(\mu\cdot\Sigma_P)/\bP_{2n+2}$ we conclude that $\fr(\mu\cdot\Sigma_P)\subset \bP_{2n+2}$. Combining this with the first part of the argument, we arrive at the claim $\fr(\Gamma_{g,1})=\fr(E(\mu\cdot\Sigma_P))\cap\Theta_{2n+1}\subset \bP_{2n+2}$.
\end{proof}

\noindent \textbf{Claim \circled{3}}: For $g\le 1$, we have $\fr(\Gamma_{g,1})=1$. 
\begin{proof}For $g=0$ we have $\Gamma_{g,1}=\pi_0\Diff_\partial(D^{2n})\cong\Theta_{2n+1}$, so since $\Theta_{2n+1}$ is a finite abelian group, we have $\fr(\Gamma_{g,1})=1$ by \cref{lem:grp-thy-lem} \ref{lem:grp-thy-lem:ex}, as claimed. In the case $g=1$ and $n\neq3,7$, we use that $\smash{\frac{\sgn}{8}}$ is trivial for $g=1$ (see \ref{enum:sgn-genusone}), which implies $\smash{E(\frac{\sgn}{8}\cdot\Sigma_P)\cong \Gamma_{g,\half}\times \Theta_{2n+1}}$ and thus $\fr(E(\frac{\sgn}{8}\cdot\Sigma_P))=1$ by \cref{lem:grp-thy-lem} \ref{lem:grp-thy-lem:ex} and the fact from \ref{enum:gamma-semidirect} that $\fr(\Gamma_{g,\half})=1$. Combining this with Claim \circled{1}, we obtain  $\fr(\Gamma_{g,1})\cap\Theta_{2n+1}=1$. But since $\fr(\Gamma_{g,1})\subset\Theta_{2n+1}$ by Claim $\circled{2}$, we arrive at the claim $\fr(\Gamma_{g})=1$. In the remaining case $n=3,7$ and $g=1$, we first consider the map of extensions from \cite[Equation (2.2)]{Krannich-mcg} which involves the monomorphism $(s_F,p)$ from \ref{enum:gamma-semidirect}
\[
\begin{tikzcd}[row sep=0.4cm, column sep=0.2cm]
0\rar&\bfZ^{2g}\otimes \mathrm{im}(\pi_n(\SO(n)\ra\pi_n(\SO(n+1)))\rar\arrow[d,"\id\otimes\inc"hook]&\Gamma_g\rar{p}\arrow[d,"{(s_F,p)}", hook]&\Sp_{2g}(\bfZ)\arrow[d,equal]\rar&0\\
0\rar&\bfZ^{2g}\otimes \pi_n(\SO)\rar&(\bfZ^{2g}\otimes \pi_n(\SO))\rtimes\Sp_{2g}(\bfZ) \rar&\Sp_{2g}(\bfZ)\rar&0
\end{tikzcd}
\]
By Lemma 2.1, the image of the monomorphism $\mathrm{im}(\pi_n(\SO(n)\ra\pi_n(\SO(n+1)))\hookrightarrow \pi_n(\SO)\cong\bfZ$ has index $2$. By Theorem 2.2, the upper extension is trivial, so we can conclude that there is an isomorphism $\smash{\Gamma_{g,\half}\cong \bfZ^{2g}\rtimes \Sp_{2g}(\bfZ)}$ with respect to which the monomorphism $\smash{(s_F,p)\colon \Gamma_{g,\half}\hookrightarrow \bfZ^{2g}\rtimes \Sp_{2g}(\bfZ)}$ from \ref{enum:gamma-semidirect} is given by $\smash{(2,\id)\colon \bfZ^{2g}\rtimes \Sp_{2g}(\bfZ)\hookrightarrow \bfZ^{2g}\rtimes \Sp_{2g}(\bfZ)}$. We can now define a self-monomorphism $\kappa\colon \Gamma_{g,\half}\hookrightarrow\Gamma_{g,\half}$ for any fixed number $k\neq 0$ similar to \eqref{equ:dfn-of-kappa}, but using the above isomorphism $\smash{\Gamma_{g,\half}\cong \bfZ^{2g}\rtimes \Sp_{2g}(\bfZ)}$ instead of $(s_F,p)$. Arguing as in the proof of Claim $\circled{1}$ we have $\smash{\fr(\Gamma_{g,1})\cap\Theta_{2n+1}=\fr(\kappa^*\Gamma_{g,1})\cap\Theta_{2n+1}}$ which together with Claim $\circled{2}$ yields $\fr(\Gamma_{g,1})=\fr(\kappa^*\Gamma_{g,1})\cap\Theta_{2n+1}$, so it would suffice to show $\fr(\kappa^*\Gamma_{g,1})=1$. This follows from an application of \cref{lem:grp-thy-lem} \ref{lem:grp-thy-lem:iv} to the extension central extension $\smash{0\ra \Theta_{2n+1}\ra \kappa^*\Gamma_{g,1}\ra \Gamma_{g,1\half}\ra0}$ once we verify the three assumptions that (I) $\Gamma_{g,1\half}$ has finitely generated abelianisation, (II) $\fr(\Gamma_{g,1\half})=1$, and (III) the cohomology class $\smash{\kappa^*(-\frac{\chi^2-\sgn}{8})\cdot\Sigma_P\in\oH^2(\Gamma_{g,1\half};\Theta_{2n+1})}$ which classifies this extension by \eqref{equ:extension-class} has trivial image in $\Hom(\oH_2(\Gamma_{g,1\half}),\Theta_{2n+1})$ for some choice of $k\neq0$. Condition (I) holds since $\Gamma_{g,1\half}$ is itself finitely generated, because we saw above that it is is isomorphic to $\bfZ^{2g}\rtimes \Sp_{2g}(\bfZ)$. Condition (II) holds by \ref{enum:gamma-semidirect}. This leaves us with checking condition (III) for which it suffices to show that  $ \smash{\kappa^*(-\frac{\chi^2-\sgn}{8})}$ maps under the canonical morphism $h\colon \oH^2(\Gamma_{g,1\half});\bfZ)\ra \Hom\oH_2(\Gamma_{g,1\half}),\bfZ)$ to $\smash{-k^2\cdot (\chi^2-\sgn)/8\circ (s_F,p)_*}$, since then $\smash{\kappa^*(-\frac{\chi^2-\sgn}{8})\cdot\Sigma_P}$ maps to $-k^2\cdot \Sigma_P\cdot ((\chi^2-\sgn)/8\circ (s_F,p)_*)$ in $\Hom(\oH_2(\Gamma_{g,1\half}),\Theta_{2n+1})$ which is trivial if we choose $k\neq0$ to divide the order of $\Sigma_P$. To show this remaining identity $h( \smash{\kappa^*(-\frac{\chi^2-\sgn}{8})})=\smash{-k^2\cdot (\chi^2-\sgn)/8\circ (s_F,p)_*}$, we compute 
\[\textstyle{8\cdot h( \smash{\kappa^*(-\frac{\chi^2-\sgn}{8})})=-(\chi^2-\sgn)\circ (s_F,p)_*\circ \kappa_*=-(\chi^2-\sgn)\circ (k,\id)_*\circ (s_F,p)_*}\]
where the first equality follows from \ref{enum:sgnchsquare-lift} and the second equality holds since $(s_F,p)\circ \kappa=(k,\id)\circ (s_F,p)$ by definition of $\kappa$. We then compute further
\[-(\chi^2-\sgn)\circ (k,\id)_*\circ (s_F,p)_*=-\chi^2\circ (k,\id)_*\circ (s_F,p)_*=-k^2\cdot \chi^2\circ (s_F,p)
\]
using the final part of \ref{enum:sgn-genusone} for the first equality and \cref{lem:chi-scaling} for the second. In total we obtain $8\cdot h( \smash{\kappa^*(-\frac{\chi^2-\sgn}{8})})=-k^2\cdot \chi^2\circ (s_F,p)$ in $\Hom(\oH_2(\Gamma_{g,1\half}),\bfZ)$, so the claimed identity follows by dividing this equality by $8$. 
\end{proof}

\noindent \textbf{Claim \circled{4}}: For $g\ge 2$, we have $\bP_{2n+2}\subset\fr(\Gamma_{g,1})$, and thus $\bP_{2n+2}=\fr(\Gamma_{g,1})$ by Claim $\circled{2}$.

\begin{proof} We first form the pullback involving the monomorphism $(s_F,p)$ from \ref{enum:gamma-semidirect}
\begin{equation}\label{equ:dfn-gammaF}
\begin{tikzcd}
\Gamma^F_{g,\half}\arrow[r,hook,"p_F",swap]\arrow[dr, phantom, "\scalebox{1}{$\lrcorner$}" , pos=0, color=black]\arrow[d,hook,"{\iota_F}"]&\Sp_{2g}(\bfZ)\arrow[d,hook,"\inc"]\\
\Gamma_{g,\half}\arrow[r,hook,"{(s_F,p)}",swap]& (\bfZ^{2g}\otimes \pi_n(\oO))\rtimes \Sp_{2g}(\bfZ)
\end{tikzcd}\end{equation}
and consider the diagram of horizontal of extensions
\begin{equation}\label{equ:iota-f-mu-ext}
\begin{tikzcd}[row sep=0.1cm,column sep=0.1cm]
&0\arrow[rr]&&[20pt]\Theta_{2n+1}\arrow[dd,equal]\arrow[rr]&&E(\iota_F^*\mu\cdot\Sigma_P)\arrow[dd,hook]\arrow[rr] &&\Gamma^F_{g,\half}\arrow[rr]\arrow[dd,hook,"\iota_F",near start]&&0\\
0\arrow[rr]&&\bfZ\arrow[ur,"\Sigma_P",swap]\arrow[rr,crossing over]\arrow[dd,equal,crossing over]&&E(\iota_F^*\mu)\arrow[ur]\arrow[rr,crossing over]\arrow[dd,hook,crossing over] &&\Gamma^F_{g,\half}\arrow[rr,crossing over]\arrow[dd,hook,"\iota_F",near start,crossing over]\arrow[ur,equal]&&0\\
&0\arrow[rr]&&\Theta_{2n+1}\arrow[rr]&&E(\mu\cdot\Sigma_P)\arrow[rr] &&\Gamma_{g,\half}\arrow[rr]&&0\\
0\arrow[rr]&&\bfZ\arrow[ur,"\Sigma_P",swap]\arrow[rr]&&E(\mu)\arrow[rr]\arrow[ur] &&\Gamma_{g,\half}\arrow[rr]\arrow[ur,equal]&&0
\end{tikzcd}
\end{equation}
obtained from the front bottom row classified by $\mu\in \oH^{2}(\Gamma_{g,\half};\bfZ)$ by pulling it back along the inclusion $\iota_F$ to obtain the front part of the diagram, and then pushing out along $\Sigma_P\colon \bfZ\ra \Theta_{2n+1}$ to obtain the back part. In a moment, we will show that $\smash{\fr(E(\iota_F^*\mu))=\bfZ}$. Assuming this for now, the proof concludes as follows: Applying \cref{lem:grp-thy-lem} \ref{lem:grp-thy-lem:functoriality} to the map $E(\iota_F^*\mu)\ra E(\mu\cdot \Sigma_P)$, we deduce $\langle \Sigma_P\rangle \subset \fr(E(\mu\cdot\Sigma_P))$, so since $\bP_{2n+2}=\langle \Sigma_P\rangle $, we obtain $\bP_{2n+2}\subset \fr(E(\mu\cdot\Sigma_P))$ which combined with Claim $\circled{1}$ implies $\bP_{2n+2}\subset \fr(\Gamma_{g,1})$ as claimed. We are thus left to show that $\smash{\fr(E(\iota_F^*\mu))=\bfZ}$ which we do by verifying the assumptions of \cref{lem:criterion-z-fr} for the class $\smash{\iota_F^*\mu\in \oH^{2}(\Gamma_{g,\half}^F;\bfZ)}$, viewing $\smash{\Gamma_{g,\half}^F}$ as a subgroup of $\smash{\Sp_{2g}(\bfZ)}$ via the inclusion $p_F$ in \eqref{equ:dfn-gammaF}. Since the index of the image of $p_F$ is the index of the image of $(s_F,p)$ in \eqref{equ:dfn-gammaF} which is finite by \cref{enum:gamma-semidirect}, so the first condition in \cref{lem:criterion-z-fr} holds for $\iota_F^*\mu$. To show the second condition, i.e.\,that the abelianisation $\smash{\oH_1(\Gamma_{g,\half}^F)}$ is finite, note that since the right vertical inclusion in \eqref{equ:dfn-gammaF} is a split injection, the same applies for the left vertical inclusion, so $\smash{\oH_1(\Gamma_{g,\half}^F)}$ is a summand of $\smash{\oH_1(\Gamma_{g,\half})}$ and thus it suffices to prove that $\smash{\oH_1(\Gamma_{g,\half})}$ is finite. This holds by a combination of Corollary 2.4 and Table 2 in loc.cit., using the assumption that $g\ge 2$. This leaves us with establishing the third condition in \cref{lem:criterion-z-fr} for the class $\iota_F^*\mu$, i.e.\,that we have $8\cdot \mu \circ (\iota_F)_*=\sgn\circ (p_F)_*$ in $\Hom(\oH_2(\Gamma_{g,\half}^F),\bfZ)$. From the definition of $\mu$ and commutativity of \eqref{equ:dfn-gammaF}, we see that the left hand side of the claimed equation is given by the composition
\[\smash{\oH_2(\Gamma_{g,\half}^F)\xra{(p_F)_*}\oH_2(\Sp_{2g}(\bfZ))\xra{\inc_*}\oH_2((\bfZ^{2g}\otimes\pi_n(\oO))\rtimes \Sp_{2g}(\bfZ))\lra
\bfZ} \]
where the final arrow is $\smash{
\sgn\circ (\pr_2)_*}$ if $n\neq3,7$ and $\smash{-\chi^2+(\sgn\circ (\pr_2)_*)}$ if $n=3,7$. As $\smash{\chi^2\circ \inc_*}$ vanishes by \cref{lem:chi-scaling}, we have $8\cdot \mu \circ (\iota_F)_*=\sgn\circ (\pr_2)_*\circ \inc_*\circ (p_F)_*$ in both cases. But since $\smash{\pr_2\circ \inc=\id_{\Sp_{2g}(\bfZ)}}$, this implies $8\cdot \mu \circ (\iota_F)_*=\sigma\circ (p_F)_*$ as claimed.\end{proof}

Combining Claims \circled{3} and \circled{4}, we conclude \cref{thm:fr-prelim}.

\bibliographystyle{amsalpha}
\bibliography{literature}

\medskip

\end{document}